\newtheorem{thm2}{Theorem}
\newtheorem{defn2}{Definition}
\newtheorem{thm}{Theorem}[section]
\newtheorem{cor}[thm]{Corollary}
\newtheorem{lem}[thm]{Lemma}
\newtheorem{prop}[thm]{Proposition}
\theoremstyle{definition}
\newtheorem{obs}[thm]{Observations}
\newtheorem{defn}[thm]{Definition}
\newtheorem{rmk}[thm]{Remark}
 \DeclareMathOperator{\Spec}{Spec}
\DeclareMathOperator{\Hom}{Hom} 
 \DeclareMathOperator{\rk}{rk}
\DeclareMathOperator{\Pic}{Pic}
\newcommand{\C}{\ensuremath\mathds{C}}
\newcommand{\R}{\ensuremath\mathds{R}}
\newcommand{\Z}{\ensuremath\mathds{Z}}
\newcommand{\Q}{\ensuremath\mathds{Q}}
\newcommand{\h}{\ensuremath\mathfrak{h}}
\newcommand{\PP}{\ensuremath\mathds{P}}
\newcommand{\HH}{\ensuremath\mathrm{H}}
\newcommand{\CH}{\ensuremath\mathrm{CH}}
\begin{document}

\thispagestyle{empty} 
\title{Exceptional collections, and the N\'eron--Severi lattice for surfaces}
\author{Charles Vial}
\thanks{2010 {\em Mathematics Subject Classification.} 14F05, 14C20, 14J26,
14J29, 14G27, 14C15}


	\thanks{The  author was supported by the Fund for Mathematics at the
		Institute for Advanced Study and by EPSRC Early Career Fellowship
		EP/K005545/1.}


\address{DPMMS, University of Cambridge, Wilberforce Road, Cambridge
  CB3 0WB, UK} \email{c.vial@dpmms.cam.ac.uk}


\begin{abstract}
We work out properties of smooth projective varieties $X$ over a (not
necessarily algebraically closed) field $k$ that admit collections of objects in
the bounded derived category of coherent sheaves $D^b(X)$ that are either full
exceptional, or numerically exceptional of maximal length. Our main result gives
a necessary and sufficient condition on the N\'eron--Severi lattice for a smooth
projective surface $S$ with $\chi(O_S)=1$ to admit a numerically exceptional
collection of maximal length, consisting of line-bundles. As a consequence we determine exactly which
complex surfaces with $p_g=q=0$ admit a numerically exceptional collection of
maximal length. Another consequence is that a minimal geometrically rational surface
with a numerically exceptional collection of maximal length is rational.
\end{abstract}

\maketitle

\section*{Introduction}
Recently, a substantial amount of work \cite{ao, bgs, bgks, gs}
was carried
out in order to exhibit exceptional collections of line-bundles of maximal
length on complex surfaces of general type with $p_g=q=0$, motivated by the will
to exhibit geometric (quasi)-phantom triangulated categories, \emph{i.e.}, categories
with trivial or torsion Grothendieck group $K_0$\,; see also \cite{go}. Kuznetsov's  recent
ICM address \cite{kuznetsov2} sets the notion of exceptional collection into the
wider picture of semi-orthogonal decompositions for bounded derived categories
of smooth projective varieties.

The purpose of this work is twofold\,: we give arithmetic, and geometric,
constraints for the existence of exceptional collections of maximal length on smooth projective surfaces over a field. For
instance, on the geometric side, we show that there are no numerical
obstructions to the existence of exceptional collections of maximal length on
complex surfaces of general type with $p_g=q=0$ (in fact we give in Theorem
\ref{thm surface
general type} a complete classification of complex surfaces with $p_g=q=0$ that
admit a numerically exceptional collection of maximal length). On the arithmetic
side, we show that a minimal
geometrically rational surface over a perfect field $k$ that  admits a
numerically exceptional collection of maximal length is rational (Theorem
\ref{thm
rational}).  We also show that a numerically exceptional collection of maximal
length, consisting of line-bundles, on a surface $S$ defined
over an arbitrary field $k$ remains of maximal length after any field extension
(Theorem \ref{thm cycle}). These results are deduced from a general criterion
(the main Theorem
\ref{thm criterion}) that gives, for a smooth projective surface $S$ defined
over an arbitrary field $k$, a necessary and sufficient condition on the
N\'eron--Severi
lattice of $S$ for $S$ to admit a numerically exceptional collection of maximal
length, consisting of line-bundles. 

Along the way, we find that if a surface admits a full exceptional collection,
then its integral Chow motive is a direct sum of Lefschetz motives (Theorem \ref{thm
surface}). 
On a slightly unrelated note, we also provide a new characterization of
projective space (Theorem \ref{thm projective}).

\subsection*{Derived category of coherent sheaves and exceptional objects} Let
$k$ be a field and let $\mathcal{T}$ be a $k$-linear triangulated category. The
typical example of triangulated category that we have in mind is given by the
bounded derived category $D^b(X)$ of coherent sheaves on a smooth projective
variety $X$ defined over  $k$. Given a morphism $f : T_1 \rightarrow T_2$
between two objects $T_1$ and $T_2$ of $\mathcal{T}$, there is a distinguished
triangle $$T_1 \stackrel{f}{\longrightarrow} T_2 \longrightarrow
\mathrm{cone}(f) \longrightarrow T_1[1].$$
If $\mathcal{A}$ and $\mathcal{B}$ are two strictly full triangulated
subcategories of $\mathcal{T}$ (we mean that $\mathcal{A}$ and $\mathcal{B}$ are
closed under shifts and cones), then $$\mathcal{T} = \langle
\mathcal{A},\mathcal{B}\rangle$$ is a \emph{semi-orthogonal decomposition} if
\begin{enumerate}[(i)]
\item $\Hom(B,A) = 0$ for all objects $A \in \mathcal{A}$ and $B\in \mathcal{B}$
(note that since $\mathcal{A}$ and $\mathcal{B}$ are closed under shifts, we in
fact have $\mathrm{Ext}^i(B,A) = 0$ for all integers $i\in \Z$)\,;
\item $\mathcal{A}$ and $\mathcal{B}$ generate $\mathcal{T}$\,: for all objects
$T\in \mathcal{T}$, $T$ fits into a  distinguished triangle $T_\mathcal{B}
\rightarrow T \rightarrow T_\mathcal{A} \rightarrow T_\mathcal{B}[1]$ for some
objects $T_\mathcal{A}$ of $\mathcal{A}$ and $T_\mathcal{B}$ of $\mathcal{B}$.
\end{enumerate}
More generally, $$\mathcal{T} = \langle
\mathcal{A}_1,\ldots,\mathcal{A}_n\rangle$$ is a \emph{semi-orthogonal
decomposition} if 
\begin{enumerate}[(i)]
\item $\Hom(\mathcal{A}_i,\mathcal{A}_j) = 0$ for all $i>j$\,;
\item For all $T\in \mathcal{T}$, there exist $T_i \in \mathcal{T}$ and a
sequence $0=T_n \rightarrow T_{n-1} \rightarrow \ldots \rightarrow T_1
\rightarrow T_0=T$ such that $\mathrm{cone}(T_i \rightarrow T_{i-1}) \in
\mathcal{A}_i$ for all $i$.
\end{enumerate}
\medskip

The simplest triangulated category is probably the bounded derived category
$D^b(k\mathrm{-vs})$ of $k$-vector spaces. Given a triangulated category
$\mathcal{T}$, it is natural to be willing to split off copies of
$D^b(k\mathrm{-vs})$ inside $\mathcal{T}$, in the sense of semi-orthogonal
decompositions. Consider then a functor $D^b(k\mathrm{-vs}) \rightarrow
\mathcal{T}$. Such a functor is determined by the image $E \in \mathcal{T}$ of
the one-dimensional $k$-vector space placed in degree $0$. Let us denote this
functor $\varphi_E$\,; then, for any complex $V^\bullet \in D^b(k\mathrm{-vs})$, we
have $\varphi_E(V^\bullet) = V^\bullet \otimes E$. A right-adjoint functor is
given by $\varphi_E^!(F) = \Hom^\bullet(E,F)$, so that
$\varphi_E^!\varphi_E(V^\bullet) = \Hom^\bullet(E,E) \otimes V^\bullet$. Thus
$\varphi_E$ is fully faithful if and only if $\Hom^\bullet(E,E) = k$ placed in
degree $0$.

\begin{defn2}
An object $E \in \mathcal{T}$ is \emph{exceptional} if $$\Hom(E,E[l]) = \left\{
\begin{array}{cc}
             k & \mbox{if} \ l=0\,; \\
             0 & \mbox{otherwise}.  
 \end{array} \right. $$
 
 \noindent An \emph{exceptional collection} is a collection of exceptional
objects $E_1,\ldots, E_n \in \mathcal{T}$ such that $$\Hom(E_i,E_j[l]) = 0 \quad
\mbox{for all} \ i>j \ \mbox{and all} \ l\in \Z.$$
\end{defn2}
An important feature of exceptional objects is that strictly full triangulated
subcategories generated by exceptional collections are \emph{admissible},
meaning that the inclusion functor admits a left and a right adjoint.
Consequently, given an exceptional collection $(E_1,\ldots , E_n)$ of objects in
$\mathcal{T}$, we have a semi-orthogonal decomposition $$\mathcal{T} = \langle
E_1,\ldots, E_n,\mathcal{A}\rangle,$$ where $\mathcal{A} = \{T\in \mathcal{T} :
\Hom(T,E_i) = 0 \ \mbox{for all} \ 1 \leq i\leq n \}$, and
where, by abuse, we have denoted $E_i$ the subcategory generated by $E_i$.

\noindent When $\mathcal{A} = 0$, the semi-orthogonal decomposition
$$\mathcal{T} = \langle E_1,\ldots, E_n\rangle$$ is said to be a \emph{full
exceptional collection}.
\medskip

Let us now give some examples of smooth projective varieties $X$ whose bounded
derived category of coherent sheaves admits a full exceptional collection. 
\begin{itemize}
\item $X=\PP^n$\,: we have the Beilinson sequence \cite{beilinson} (this is
perhaps the most famous example of a full exceptional collection) $$D^b(\PP^n) =
\langle O,O(1),\ldots,O(n)\rangle.$$ 
\item $X= \widetilde{\PP}^2$ the blow-up of $\PP^2$ at a point\,: if $E$ denotes
the exceptional divisor, then $$D^b(\widetilde{\PP}^2) = \langle
O_E(-1),O(-2),O(-1),O\rangle$$ is a full exceptional collection. More
generally, there is a blow-up formula due to Orlov \cite{orlov}.
\item $X=Q^n \subset \PP^{n+1}_\C$\,: a smooth complex quadric. Kapranov
\cite{kapranov} showed that $$D^b(Q^n) = \left\{ \begin{array}{cc}
            \langle S,O,O(1),\ldots,O(n-1)\rangle & \mbox{if} \ n \ \mbox{is
odd} \\
              \langle S^-,S^+,O,O(1),\ldots,O(n-1)\rangle & \mbox{if} \ n \
\mbox{is even}
 \end{array} \right. $$ is a full exceptional collection. Here, $S,S^-$ and
$S^+$ are certain spinor bundles.
\end{itemize}
Other examples of varieties admitting exceptional collections include complex
Grassmannians (Kapranov \emph{loc. cit.}), and several other complex rational homogeneous spaces. In fact, it is expected that if $G$ is a semi-simple algebraic group over an algebraically closed field of characteristic zero and $P \subset G$ is a parabolic subgroup, then
there is a full exceptional collection of vector bundles in $D^b(G/P)$\,; see \cite[Conjecture 1.1]{kp} and the results in that direction therein.
The projective space $\PP^n$ admits a
full exceptional collection consisting of $n+1$ \emph{line-bundles}. At least
in characteristic zero,
this
property characterizes completely the projective space among $n$-dimensional
smooth projective varieties\,:
\begin{thm2}[Theorem \ref{thm projective}] \label{thm2 projective}Let $X$ be a
	smooth projective
	variety of dimension $n$ over a field $k$ of characteristic zero. Assume that
	$ \langle
	L_0,\ldots,L_{n}\rangle$ is a full exceptional collection of
	$D^b(X)$ for some line-bundles $L_0,\ldots, L_{n}$.
	Then $X$ is isomorphic to the projective space $\PP^n$.
\end{thm2}

As is apparent, the class of varieties admitting full exceptional collections is
rather restricted.
Perhaps the simplest constraint for a smooth projective variety to admit a full
exceptional collection is the following. If we have a semi-orthogonal
decomposition $\mathcal{T} = \langle \mathcal{A}, \mathcal{B}\rangle$, then
$K_0(\mathcal{T}) = K_0(\mathcal{A})\oplus K_0(\mathcal{B})$. Since
$K_0(D^b(k\mathrm{-vs})) = \Z$, we see that if $D^b(X)=\langle E_1,\ldots,
E_r\rangle$ is a full exceptional collection, then $K_0(X)=\Z^r$. As will be
explained in the introduction of Section 2, this implies that the Chow motive of
$X$ with rational coefficients is a direct sum of Lefschetz motives. Such a constraint was originally obtained \emph{via} the theory of non-commutative motives by Marcolli and Tabuada \cite{mt}.

\subsection*{Chow motives and Lefschetz motives} Let $R$ be a ring. The category of
Chow motives with $R$-coefficients over $k$ is constructed as follows. First one
linearizes the category of smooth projective varieties over $k$ by declaring
that $\Hom(X,Y) = \CH^{d}(X\times Y)\otimes_Z R$, that is, by declaring that the
morphism between $X$ and $Y$ are given by correspondences with $R$-coefficients
modulo rational equivalence. Here, $X$ is assumed to be of pure dimension $d$
(otherwise one works component-wise) and the composition law is given by $$\beta
\circ \alpha = (p_{XZ})_*(p_{XY}^*\alpha \cdot p_{YZ}^*\beta) \in \CH^d(X\times
Z),$$ for all $\alpha \in \CH^d(X\times Y)$ and all $\beta \in \CH^e(Y\times
Z)$. Here, $e$ is the dimension of $Y$, and $p_{XZ}$, $p_{XY}$, $p_{YZ}$ are the
projections from $X\times Y\times Z$ onto $X\times Z, X\times Y, Y\times Z$,
respectively. This $R$-linear category  is far from being abelian, so that one
formally adds to this $R$-linear category the images of idempotents. This is
called taking the pseudo-abelian, or Karoubi, envelope. This new category is
called the category of effective Chow motives, and objects are pairs $(X,p)$,
where $X$ is a smooth projective variety of dimension $d$ and $p\in
\CH^d(X\times X)\otimes_\Z R$ is an idempotent. When $p$ is the class of the
diagonal $\Delta_X$ in $\CH^d(X\times X)$, we write $\h(X)$ for $(X,\Delta_X)$. 
In general, the object $(X,p)$ should be thought of as the image of $p$ acting
on the motive $\h(X)$ of $X$. For example, in the 
category of effective Chow motives, the motive $\h(\PP^1)$  of the
projective line 
becomes isomorphic to $(\PP^1,p) \oplus (\PP^1,q)$, where $p:= \{0\} \times
\PP^1$ and $q:=\PP^1 \times \{0\}$ are idempotents in $\Hom(\h(\PP^1),\h(\PP^1))
:= \CH^1(\PP^1\times \PP^1)$. The object $(\PP^1,p)$ is isomorphic to $\mathds{1} := \h(\Spec
k)$, and the motive $(\PP^1,p)$ is
called the \emph{Lefschetz motive} and is written $\mathds{1}(-1)$. The fiber product of two smooth projective varieties induces a tensor
product in the category of effective Chow motives, for which $\mathds{1}$ is a unit. The category of Chow motives
with $R$-coefficients is then obtained by formally inverting the Lefschetz
motive.

Concretely, a Chow motive with $R$-coefficients is a triple $(X,p,n)$ consisting
of a smooth projective variety $X$ of pure dimension $d$ over $k$, of a
correspondence $p \in \CH^d(X\times X) \otimes_Z R$ such that $p\circ p = p$,
and of an integer $n$. A morphism $\gamma \in \Hom((X,p,n),(Y,q,m))$ is an
element of $q\circ (\CH^{d-n+m}(X\times Y)\otimes_\Z R)\circ p$.\medskip

The simplest motives are the motive $\mathds{1}:=\h(\Spec k)$ of a point $\Spec
k$ and its Tate twists,
that is, the motives $\mathds{1}(n) = (\Spec k, \mathrm{id}, n)$ for $n\in \Z$.
These are called
the \emph{Lefschetz motives}. Note that $$\Hom(\mathds{1}(-n),\h(X)) =
\CH^{n}(X)\otimes_\Z R.$$
As in the case of triangulated categories, it is natural, given a motive $M$, to
split off copies of Lefschetz motives. Given a morphism $\gamma \in
\Hom(\mathds{1}(-n),\h(X))$, there is an obvious obstruction to the existence
of a splitting to that morphism\,: if $\gamma \in \CH^{n}(X)$ is a
non-zero numerically trivial cycle, then $\gamma$ does not admit a left-inverse.
Even if $\gamma$ is not numerically trivial, the existence of a left-inverse is
in general a problem of existence of algebraic cycles (in this case, the existence of a cycle $\gamma' \in \CH^{d-n}(X)$ such that $\deg (\gamma \cdot \gamma')=1$). In Section 2, we prove\,:
\begin{thm2} [Theorem \ref{thm surface}]\label{thm2 1} Let $S$ be a smooth
projective surface
over a
field $k$. Assume that $S$ has a full exceptional collection.
  Then the integral Chow motive of $S$ is isomorphic to a direct sum of Lefschetz
  motives.
\end{thm2} 
One may naturally ask if the converse to Theorem \ref{thm surface} holds.
In Remark \ref{rmk barlow}, we give evidence that the converse should fail to
be true.

\subsection*{Numerical constraints} Although the problem of classifying smooth
projective complex surfaces that admit a full exceptional collection seems out
of reach at present (it is conjectured that only the surfaces that are rational have a
full
exceptional collection), a fair amount of work \cite{ao, bgs, bgks, gs} has been
carried out in order to
construct exceptional collections of maximal length on complex surfaces with
$p_g=q=0$. (As usual, for a smooth projective surface $S$, the geometric genus is $p_g := h^0(\Omega_S^2) = h^2(O_S)$ and the irregularity is $q :=  h^1(O_S)$.) A first step in constructing exceptional collections consists in
 constructing numerically exceptional collections.

Recall that, given a $k$-linear triangulated category $\mathcal{T}$ and two objects $E$ and $F$ in $\mathcal{T}$, the \textit{Euler pairing} $\chi$ is the integer $$\chi(E,F) := \sum_l
(-1)^l \dim_k \Hom(E, F[l]).$$ The Euler pairing defines a bilinear pairing on the Grothendieck groups $K_0(\mathcal{T})$ that we still denote $\chi$.
\begin{defn2}
An object $E$  is said to be \emph{numerically exceptional} if
$$\chi(E,E)= 1.$$
A collection $(E_1,\ldots,E_r)$ of numerically exceptional objects in $\mathcal{T}$
is
called a \emph{numerically exceptional collection} if $$\chi(E_j,E_i)  = 0 \quad \mbox{for all} \ j>i.$$ 
A numerically exceptional
collection $(E_1,\ldots,E_r)$ on a smooth projective variety $X$ over $k$ is said to be of \emph{maximal length} if $E_1,\ldots,E_r$ span the numerical Grothendieck group, or equivalently if $r$ is
equal to the rank of $K^{\mathrm{num}}_0(X)$. (Here, $K_0^{\mathrm{num}}(X) := K_0(X)/(\ker \chi)$\,; note that the left and right kernels of $\chi$ are the same so that the notation
$\ker \chi$ is unambiguous.)
\end{defn2}
Obviously, an exceptional object is numerically
exceptional, an exceptional collection is a numerically exceptional collection,
and a full exceptional collection is a numerically exceptional collection of
maximal length.\medskip

In this work, we give a complete classification of smooth projective complex surfaces, with
$p_g=q=0$, that admit numerically exceptional collections of maximal length\,:
\begin{thm2}[Theorem \ref{thm surface general type}]\label{thm2 2} Let $S$ be a
smooth projective complex surface, with
$p_g=q=0$. As usual, $\kappa(S)$ denotes the Kodaira dimension of $S$.
\begin{itemize}
\item If $S$ is not minimal, then $S$ has a numerically exceptional collection
of maximal length.
\end{itemize}
Assume now that $S$ is minimal.
\begin{itemize}
\item If $\kappa(S) = -\infty$, then $S$ has a numerically
exceptional collection of maximal length.
\item If $\kappa(S) = 0$, then $S$ is an Enriques surface and
it does not have a
numerically exceptional collection of maximal length.
\item If $\kappa(S)=1$, then $S$ is a Dolgachev surface
$X_9(p_1,\ldots,p_n)$, and $S$ has a numerically exceptional collection of
maximal length if and only if $S$ is one of $X_9(2,3)$, $X_9(2,4)$, $X_9(3,3)$,
$X_9(2,2,2)$. (We refer to paragraph \ref{sec cpxsurface} for the notations.)
\item If  $\kappa(S) = 2$, then $S$ has a numerically
exceptional collection of maximal length.
\end{itemize}
\end{thm2}
In particular, for surfaces of general type with $p_g=q=0$, there is no
numerical obstruction to the existence of exceptional collections of maximal
length.  In the case of Enriques surfaces, a general Enriques surface admits ten
different elliptic pencils $|2F_1|,\ldots |2F_{10}|$ and the line-bundles
$O(F_1),\ldots, O(F_{10})$ provide an exceptional collection of length $10$\,; see
 Zube \cite{zube}. (Any reordering of this length-$10$ exceptional collection  is still an exceptional
collection.)  Our Theorem \ref{thm
surface general type} says in particular that it is not possible to find an
exceptional collection consisting of 12 exceptional objects. On the other hand, Theorem \ref{thm
	surface general type} says that an Enriques surface blown up at  a point admits a numerically exceptional collection of maximal length, that is, there is no numerical obstruction to the existence  of an exceptional collection of maximal length on an Enriques surface blown up at a point\,; \emph{cf.} Remark \ref{R:blowup}. \medskip

So far, exceptional collections have almost only be considered for varieties
defined over algebraically closed fields. Our analysis of numerically
exceptional collections of maximal length leads, intuitively, to the conclusion
that exceptional collections of maximal length, consisting of rank one objects, do not exist for surfaces that
are not ``split''. First, we have a general result\,:
\begin{thm2}[Theorem \ref{thm cycle}] \label{thm2 3}
Let $S$ be a smooth projective surface over a
field $k$, with $\chi(O_S)=1$ and with $ \HH_{et}^1(S_{\bar{k}},\Q_\ell) = 0$.
Here, $\bar{k}$ is a separable
closure of $k$, $S_{\bar{k}} = S\times_{\mathrm{Spec}\,
k}{\mathrm{Spec} \,\bar{k}}$, and $\ell$ is  a prime $\neq \mathrm{char} \, k$.
Assume that
$S$ admits a numerically
  exceptional collection $(E_0,E_1,\ldots, E_{n+1})$ of maximal length,  consisting of rank one objects. Then the cycle
class
map $$\CH^1(S) \otimes \Z_\ell \rightarrow
\HH_{et}^2(S_{\bar{k}},\Z_\ell(1))$$ is
surjective modulo torsion, that is, it induces a surjective map $$\CH^1(S)
\otimes \Z_\ell \twoheadrightarrow
\HH_{et}^2(S_{\bar{k}},\Z_\ell(1))/torsion.$$
In particular,
for all field extensions $K/k$, the
 collection $((E_0)_K,(E_1)_K,\ldots, (E_{n+1})_K)$ for $S_K = S
\times_{\mathrm{Spec}\, k} \mathrm{Spec}\, K$ is numerically
    exceptional of maximal
length, and the base-change $$\mathrm{N}^1(S) \stackrel{\simeq}{\longrightarrow}
\mathrm{N}^1(S_K)$$ is an isometry.
\end{thm2}
Second, we find arithmetic obstructions for geometrically rational surfaces to admit a numerically exceptional collection of maximal length\,:
\begin{thm2} [Theorem \ref{thm rational}] \label{thm2 4}
Let $S$ be a minimal smooth projective surface defined over a perfect field
$k$, such that $S_{\bar
k}$ is rational. If $S$ admits a
numerically exceptional collection of maximal length, then $S$ is rational.
\end{thm2}

In particular, the folklore conjecture of Orlov stating that a surface over an
algebraically closed field admits a full exceptional collection only if
it is rational can be extended to surfaces defined over arbitrary
fields. Note however that a surface may be rational but not admit a numerically exceptional collection of maximal length\,; see Remark \ref{R:rational}.
\medskip

Surprisingly,  Theorems \ref{thm2 1},  \ref{thm2 2},  \ref{thm2 3} and \ref{thm2
4} are obtained essentially by exploiting
the linear algebra 
constraints on the N\'eron--Severi lattice of $S$ imposed by the existence of a
numerically exceptional collection of maximal length. The rich linear algebra stemming from the Riemann--Roch formula is treated independently in the appendix.
In this work, the \emph{N\'eron--Severi lattice} of a smooth projective surface
$S$ over a field $k$, denoted $\mathrm{N}^1(S)$, refers to the
group of codimension-$1$ cycles of $S$ modulo numerical equivalence. Note that,
by definition of numerical equivalence, $\mathrm{N}^1(S)$ is
torsion-free.\medskip

Our main theorem, from which all theorems stated above arise from, is\,:
\begin{thm2}[Main Theorem \ref{thm criterion}]
 Let $S$ be a smooth projective surface over a
field $k$, with $\chi(O_S)=1$.  
The
following statements are equivalent\,:
\begin{enumerate}[(i)]
\item $S$ admits a numerically
  exceptional collection $(L_0,L_1,\ldots, L_{n+1})$ of line-bundles which is
  of \emph{maximal length}, that is, $n = \rk \mathrm{N}^1(S)$.
\item  $S$ admits a numerically
  exceptional collection $(E_0,E_1,\ldots, E_{n+1})$ of rank one objects in $D^b(S)$
which is
  of \emph{maximal length}, that is, $n = \rk \mathrm{N}^1(S)$.
\item We have $(K_S)^2=10-  \rk \mathrm{N}^1(S)$, and the lattice
$\mathrm{N}^1(S)$ 
  and the canonical divisor $K_S$, when seen as an element of
$\mathrm{N}^1(S)$, satisfy one of the following properties\,:
  \begin{itemize}
\item $\mathrm{N}^1(S)$ is unimodular of rank $1$, and $K_S = 3D$ for some
primitive divisor $D$\,;
\item   $\mathrm{N}^1(S)$ is the hyperbolic plane, and $K_S = 2D$ for some
primitive divisor $D$\,; 
\item  $\mathrm{N}^1(S)$ is unimodular and odd of rank $>1$, and $K_S$ is
a primitive divisor.
\end{itemize}
  \end{enumerate}
\end{thm2}

Note that a complex smooth projective surface with $p_g=q=0$ always satisfies the equation $K_S^2=10- \rk \operatorname{N}^1(S)$. It is thus surprising that, over any field, the existence of a numerically exceptional collection of line-bundles of maximal length imposes that equation.
\medskip

A recent result of Perling \cite{perling} (see Theorem \ref{thm perling}) implies that, for complex surfaces with $p_g=q=0$, the conditions of Theorem \ref{thm criterion} are further equivalent to the existence of a numerically exceptional collection of maximal length (without any assumptions on the ranks of the objects of the collection).
Theorem \ref{thm criterion}  suggests then that if an exceptional collection of maximal length exists on a surface over an algebraically closed field, then an exceptional collection of maximal length that consists of line-bundles should exist. (As far as I am aware, all examples of complex surfaces that admit an exceptional collections of maximal length admit such a collection consisting of line-bundles.)\medskip

Finally, given
a smooth projective variety $X$, note that if $X$ has an exceptional object of non-zero rank or a numerically
exceptional line-bundle, then the
structure sheaf $O_X$ is numerically exceptional, that is, $\chi(O_X)=1$. (Note also that, since the classes of rank $0$ objects sit in a codimension $1$
subspace of $K_0(X)$, if  $X$ has an exceptional collection of maximal length, then $X$ has an exceptional object of non-zero rank and hence $\chi(O_X)=1$.)
Therefore, the running assumption that $\chi(O_S)=1$ is innocuous.

\subsection*{Notations} Given a smooth projective surface $S$ defined over a field $k$, $\operatorname{N}^1(S)$ denotes the N\'eron--Severi lattice of $S$. The following numerical invariants of $S$ will be used\,:

$\rho = \rk \operatorname{N}^1(S)$, the Picard rank of $S$.

$q = \dim_k H^1(S,O_S)$, the irregularity of $S$.

$p_g = \dim_k H^2(S,O_S)$, the geometric genus of $S$.

$b_i = \dim_{\Q_\ell} H_{et}^i(S_{\bar k},\Q_\ell)$, the Betti numbers of $S$, where $\ell$ is a prime $\neq \mathrm{char}\; k$. 

\subsection*{Acknowledgments} Thanks to Magdalene College, Cambridge, the Simons Center for Geometry and Physics, and the
Institute for Advanced Study for excellent working conditions. 
Thanks to Marcello Bernardara, Fabrizio Catanese, Dmitri Orlov, Markus Perling, and Burt Totaro
for useful discussions.  Thanks to Pierre Deligne for his
interest and for fruitful discussions \cite{deligne} related to the  formula \eqref{eq kappa}. Many thanks to Alexander Kuznetsov and to the referees for their insightful
comments.

\section{A characterization of projective space}

In this section, upon which the rest of this paper does not depend,  the base
field $k$ is assumed to be of characteristic
zero.  Galkin, Katzarkov, Mellit, and Shinder \cite{gkms} have recently
considered so-called \emph{minifolds}. These are smooth projective varieties of
dimension $n$ whose bounded derived category $D^b(X)$ admits a full exceptional
collection of objects in $D^b(X)$ of length $n+1$.

\begin{thm}\label{thm gkms} [Galkin, Katzarkov, Mellit, and Shinder] Assume that
the base field $k$ is the field of complex numbers. Then
\begin{enumerate}
\item The only two-dimensional minifold is $\PP^2$.
\item The minifolds of dimension $3$ are\,: the projective space $\PP^3$, the
quadric three-fold, the del Pezzo quintic three-fold, and a six-dimensional
family of three-folds $V_{22}$\,; see \cite{gkms} for more details.
\item The only four-dimensional Fano minifold is $\PP^4$.
\end{enumerate}
\end{thm}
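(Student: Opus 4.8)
The plan is to extract from a length-minimal full exceptional collection enough cohomological data to reduce the classification to that of low-dimensional varieties of negative Kodaira dimension, treating the three cases in turn. First I would observe that if $X$ is an $n$-dimensional minifold then $D^b(X)=\langle E_0,\ldots,E_n\rangle$ forces $K_0(X)\cong\Z^{n+1}$, so by Marcolli--Tabuada \cite{mt} (see also the discussion preceding Theorem \ref{thm surface}) the rational Chow motive of $X$ is a direct sum of $n+1$ Lefschetz motives, $\h(X)_\Q\cong\bigoplus_{i=0}^{n}\mathds{1}(-a_i)$. From this I would read off that the odd Betti numbers of $X$ vanish, that $h^{p,q}(X)=0$ for $p\neq q$, that $\CH_0(X)_\Q=\Q$, and that $b_{2j}(X)=\#\{i\,:\,a_i=j\}$. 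The multiset $\{a_i\}$ of $n+1$ integers in $\{0,\ldots,n\}$ is symmetric under $j\mapsto n-j$ by Hard Lefschetz, contains $0$ and $n$ with multiplicity one (as $b_0=b_{2n}=1$), and contains $1$ with multiplicity at least one (an ample class lies in $H^2(X)$); a direct check shows that for $n\leq 4$ the only possibility is $\{a_i\}=\{0,1,\ldots,n\}$. Hence $X$ has the Betti numbers of $\PP^n$, and in particular $\Pic(X)=\Z\cdot H$ with $H$ ample.

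For $n=2$, $X$ is then a surface with $b_1=0$, $b_2=1$ and $\chi(O_X)=1$, hence minimal (a blow-up would have $b_2\geq 2$) with $K_X^2=9$ by Noether's formula; by the Enriques--Kodaira classification $X$ is either $\PP^2$ or a fake projective plane. I would exclude the latter because its bounded derived category contains a quasi-phantom, so that $K_0$ acquires non-trivial torsion, contradicting $K_0(X)\cong\Z^3$. Therefore $X\cong\PP^2$, which is indeed a minifold by Beilinson \cite{beilinson}.

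For $n=3$, the crux, and the main obstacle, is to prove that $X$ is Fano. One knows $\Pic(X)=\Z\cdot H$, $b_3(X)=0$ and $p_g(X)=0$; the step that does not follow formally from the Hodge structure is to establish $\kappa(X)=-\infty$, which I would do by combining Hodge-theoretic vanishing with Serre duality on $D^b(X)$ and the Hochschild-homology constraints imposed by the exceptional collection --- for instance $\omega_X$ cannot be numerically trivial, since then $\omega_X\in\Pic^0(X)=0$ would give $\chi(E_i,E_i)=(-1)^3\chi(E_i,E_i)$, contradicting $\chi(E_i,E_i)=1$, and the remaining case, in which $K_X$ is a positive multiple of $H$ (so $X$ is of general type), is excluded by torsion in $K_0$, exactly as for fake projective planes. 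Once $\kappa(X)=-\infty$ is known, $K_X=cH$ with $c<0$ since $H$ is ample, so $-K_X$ is ample and $X$ is Fano. By the Iskovskikh--Mori--Mukai classification, the smooth Fano threefolds with $\rho=1$ and trivial intermediate Jacobian ($b_3=0$) are exactly $\PP^3$, the smooth quadric $Q^3$, the quintic del Pezzo threefold $V_5$, and the threefolds $V_{22}$; as each carries a full exceptional collection of length $4$ (due to Beilinson, Kapranov \cite{kapranov}, Orlov \cite{orlov}, and Kuznetsov respectively), these are precisely the three-dimensional minifolds.

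For $n=4$, $X$ is assumed Fano with the Betti numbers of $\PP^4$, so $\Pic(X)=\Z\cdot H$ and $b_4(X)=h^{2,2}(X)=1$; by the Kobayashi--Ochiai theorem it suffices to show that the Fano index of $X$ equals $5$. I would go through the classification of Fano fourfolds of Picard rank one --- Kobayashi--Ochiai for index $\geq 4$ (which yields $\PP^4$ and the quadric, the latter excluded since $b_4=2$), Fujita's classification of del Pezzo fourfolds for index $3$, Mukai's for index $2$, and a direct analysis of the index-one case --- and verify that $b_4\geq 2$ in every case other than $\PP^4$; the hypothesis $b_4=1$ then forces $X\cong\PP^4$, a minifold by Beilinson. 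As indicated, the one genuinely non-formal step throughout is the passage, in dimension three, from the triangulated hypothesis to the ampleness of $-K_X$; once Fano-ness is secured, what remains is bookkeeping within the classification of Fano threefolds and fourfolds together with the known constructions of minimal-length exceptional collections on the surviving candidates.
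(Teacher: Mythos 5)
First, a point of order: the paper does not prove this statement. Theorem \ref{thm gkms} is quoted from Galkin--Katzarkov--Mellit--Shinder \cite{gkms} purely as background (``see \cite{gkms} for more details''), so your proposal must be measured against the argument in that reference rather than against anything in the present paper. Your opening reduction is sound and does match the strategy of \cite{gkms}: $K_0(X)\cong\Z^{n+1}$ forces the rational Chow motive to be a sum of Lefschetz motives, whence for $n\le 4$ the Hodge diamond is that of $\PP^n$, $\Pic(X)=\Z H$ with $H$ ample, and in dimension two one is reduced to $\PP^2$ versus a fake projective plane. Even there, though, your exclusion of fake projective planes via quasi-phantoms is logically backwards: the construction of quasi-phantoms presupposes an exceptional collection on the fake projective plane, which is not known for all of them. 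The correct route is that every fake projective plane has $H_1(X,\Z)\neq 0$ (a consequence of the Cartwright--Steger classification), hence torsion in $\Pic(X)$, hence torsion in $K_0(X)$ by the Chern-class argument of Lemma \ref{lem torsionfree}.

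The genuine gap is in dimension three, at precisely the step you identify as the crux. You dispose of the case $K_X=cH$ with $c>0$ ``by torsion in $K_0$, exactly as for fake projective planes'', but no such argument exists: the torsion for fake projective planes comes from a deep arithmetic classification of those specific ball quotients, and there is no classification of, nor any torsion statement for, threefolds of general type with the Hodge diamond of $\PP^3$. The mechanism actually used to prove Fano-ness is Chern-class arithmetic: Hirzebruch--Riemann--Roch on a threefold gives $K_X\cdot c_2(X)=-24\chi(O_X)=-24<0$, while Miyaoka's pseudo-effectivity of $c_2$ for a smooth threefold with nef canonical bundle would force $K_X\cdot c_2(X)\ge 0$; hence $K_X$ is not nef, and since $\rho(X)=1$ this means $-K_X$ is ample. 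Without an input of this kind your argument does not close. A secondary but real defect occurs in dimension four: ``a direct analysis of the index-one case'' cannot be carried out as an enumeration, since Fano fourfolds of index one (and, in the needed generality, index two) are not classified; that case too must be handled by numerical constraints coming from the Euler form and Riemann--Roch rather than by running through a list.
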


For other examples of minifolds, we refer to \cite{samokhin, kuzminifold}.
 Here, although we allow the base-field $k$ to be non-algebraically closed, we
consider a more
restrictive class of varieties, namely smooth projective varieties of dimension
$n$ whose bounded derived category $D^b(X)$ admits a full exceptional collection
of line-bundles of length $n+1$. We show in Theorem \ref{thm projective} below
that such a property characterizes
completely projective space. As Theorem \ref{thm gkms} shows, it is important to
consider full exceptional collections of line-bundles rather than full
exceptional collections consisting of objects in the derived category $D^b(X)$.
In fact, it is important to consider full
exceptional collections of
line-bundles, rather than merely full exceptional collections of pure
sheaves or even vector-bundles. For example, Kapranov \cite{kapranov}
showed that quadrics of odd dimension, say $d$, over an algebraically closed
field have a full exceptional
collection consisting of $d+1$ vector-bundles. On a slightly different
perspective, Bernardara
\cite{bernardara} showed
that a Severi--Brauer variety $X$ of dimension $r$ has a full semi-orthogonal
collection of objects $E_i$, $0 \leq i \leq r$, such that $\Hom(E_i,E_i[l])= 0$
for all $l\neq 0$ and $\Hom(E_i,E_i) =A^{\otimes i}$
for the central division algebra $A$ over $k$ that has same class as $X$ in the
Brauer group $\mathrm{Br}(k)$.

\begin{thm} \label{thm projective} Let $X$ be a smooth projective
  variety of dimension $n$ over a field $k$ of characteristic zero. Assume that
$ \langle
  L_0,\ldots,L_{n}\rangle$ is a full exceptional collection of
  $D^b(X)$ for some line-bundles $L_0,\ldots, L_{n}$.
  Then $X$ is isomorphic to the projective space $\PP^n$.
\end{thm}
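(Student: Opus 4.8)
The plan is to extract numerical consequences from the existence of the full exceptional collection of line-bundles, and then invoke a known characterization of $\PP^n$ (for instance, via the anticanonical bundle or via the structure of the $K$-theory / Chow ring). First I would normalize: twisting the whole collection by $L_0^{-1}$, we may assume $L_0 = \calO_X$, so that the collection is $\langle \calO_X, L_1, \ldots, L_n\rangle$ with $L_i$ line-bundles. Semi-orthogonality gives $\Hom(L_i, L_j[l]) = \HH^l(X, L_i^{-1}\otimes L_j) = 0$ for all $i > j$ and all $l$, while exceptionality of each $L_i$ is automatic. Fullness gives that the classes $[\calO_X], [L_1], \ldots, [L_n]$ form a $\Z$-basis of $K_0(X)$, hence (after passing to the numerical/topological $K$-group or to $\CH^*(X)_\Q$ via the Chern character) that $\CH^*(X)_\Q$ has total rank $n+1$. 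Combined with the fact that $X$ is smooth projective of dimension $n$, this forces $\CH^0, \CH^1, \ldots, \CH^n$ to each be one-dimensional over $\Q$ and concentrated in the even part; in particular the Hodge numbers $h^{p,q}$ vanish off the diagonal, $\HH^i(X,\calO_X) = 0$ for $i>0$, and $\Pic(X)_\Q \cong \Q$, so $X$ has Picard rank $1$.

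Next I would pin down the positivity. Since $\langle \calO_X, L_1, \ldots, L_n\rangle$ is \emph{full}, the Serre functor $- \otimes \omega_X[n]$ permutes admissible subcategories, and more concretely one knows (from the theory of exceptional collections / the fact that for a full exceptional collection the alternating sum of the $[E_i]$ recovers a Koszul-type resolution of the diagonal) that the anticanonical bundle $\omega_X^{-1}$ is expressible in terms of the $L_i$ and is ample — the archetype being Beilinson's resolution for $\PP^n$. The cleanest route: Pic has rank one with ample generator $\calO(H)$; write $L_i = \calO(a_i H)$ with $a_0 = 0 < a_1 < \cdots$ not necessarily, but the vanishing $\HH^*(X, \calO((a_j - a_i)H)) = 0$ for $i>j$ severely constrains the $a_i$ and, via Riemann--Roch and the known structure of $\CH^*(X)_\Q \cong \Q[H]/(H^{n+1})$, forces $H^n$ to have degree $1$ and $K_X = -(n+1)H$, i.e. the Fano index is $n+1$. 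By the Kobayashi--Ochiai theorem, a smooth projective $n$-fold with Picard rank one, ample generator $H$, and $-K_X = (n+1)H$ is isomorphic to $\PP^n$. This is where I would pull in characteristic zero: Kobayashi--Ochiai is used to conclude, and it is precisely the step that (as the discussion of quadrics and Severi--Brauer varieties in the text emphasizes) fails without the line-bundle hypothesis.

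The main obstacle I anticipate is the positivity/index computation: deducing from the pure $\Hom$-vanishing that $\Pic(X)$ is generated by an \emph{ample} class with $H^n$ of degree one and $-K_X = (n+1)H$, rather than merely that numerically $\CH^*(X)_\Q$ looks like that of $\PP^n$. One must use fullness, not just semi-orthogonality — e.g. that the dual exceptional collection and the Serre-functor action force the "spread" $a_n - a_0$ to be exactly $n$ with consecutive integer jumps, reconstructing the Beilinson pattern $\calO, \calO(1), \ldots, \calO(n)$ up to twist. Once that is in hand, Kobayashi--Ochiai (or, alternatively, a direct argument that $|H|$ is base-point free and the associated morphism $X \to \PP^N$ is an isomorphism onto $\PP^n$) finishes the proof; I would also remark that the characteristic-zero hypothesis enters only through this classification input and through the use of Kodaira-type vanishing in identifying the cohomology pattern.
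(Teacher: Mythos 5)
Your overall strategy matches the paper's: reduce to $\Pic X = \Z H$, write $L_i = \calO_X(a_iH)$, use the vanishing $\chi(L_j,L_i)=0$ together with the fact that $P(a)=\chi(\calO_X(aH))$ is a degree-$n$ polynomial with $P(0)=1$ to force the $a_i$ into an arithmetic progression with step $\pm 1$, deduce $\deg(H^n)=1$ and $c_1(X)=(n+1)H$ from the two leading coefficients of $P$, and conclude by Kobayashi--Ochiai. However, there is a genuine gap at exactly the step you flag as "the main obstacle": you never actually establish that $-K_X$ is \emph{ample}. Your appeals to the Serre functor permuting admissible subcategories and to a "Koszul-type resolution of the diagonal" do not produce a positivity statement; they only reproduce numerical data, and the numerical data alone cannot decide the sign of $H$ when $n$ is even. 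This is not a cosmetic issue: a fake projective plane has $\Pic$ of rank one with ample generator $H$, $H^2=1$ and $K_X=3H$ ample, and it \emph{does} admit a numerically exceptional collection of three line bundles (this is precisely the $K_S^2=9$, $K_S=3D$ case of the paper's classification of general-type surfaces with $p_g=q=0$). So semi-orthogonality plus Riemann--Roch can never rule it out; fullness must be converted into positivity. The paper does this by citing Bondal--Polishchuk (Theorem 3.4 of \cite{bp}): a $d$-dimensional smooth projective variety carrying a \emph{full} exceptional collection of $d+1$ pure sheaves is Fano. That single input is the missing idea in your proposal; once $X$ is known to be Fano, $c_1(X)=(n+1)H$ is ample by definition and Kobayashi--Ochiai applies.

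Two smaller points. First, to write $L_i=\calO_X(a_iH)$ you need $\Pic X$ torsion-free, not just of rank one; the paper gets this from the freeness of $K_0(X)$ (Lemma \ref{lem torsionfree}). Second, Kobayashi--Ochiai is a statement over $\C$; over a general field of characteristic zero it only gives $X_{\C}\cong\PP^n_\C$, so $X$ is a Severi--Brauer variety, and one must split it using the degree-one zero-cycle $H^n$. You have the ingredient $\deg(H^n)=1$ in hand, but the descent step should be said explicitly, since (as the paper's remark on Bernardara's work indicates) non-split Severi--Brauer varieties are exactly the kind of twist the theorem must exclude.
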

\begin{proof}
  First, note that the assumption implies that $\Pic X$ is torsion-free (see
Lemma
\ref{lem torsionfree} below) and of rank $1$, so that $\Pic X = \Z H$ for
  some divisor $H$.
By Bondal--Polishchuk \cite[Theorem 3.4]{bp}, a $d$-dimensional smooth
projective variety with a full exceptional
collection consisting
of $d+1$ pure sheaves is necessarily Fano, that is, $-K_X$ is ample. The
theorem then follows from Proposition \ref{prop projective} below. 
\end{proof}

\begin{prop} \label{prop projective} Let $X$ be a smooth projective
  variety of dimension $n$ with $\mathrm{Pic} \, X = \Z H$. Assume
  either that the dimension $n$ of $X$ is odd, or that $X$ is Fano.
 If $X$ admits a numerically exceptional collection $(
  L_0,\ldots,L_n)$ of line-bundles, then $X \cong \PP^n$.
\end{prop}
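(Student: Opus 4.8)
The plan is to repackage all the hypotheses into a statement about the roots of the single polynomial $P(t):=\chi\bigl(X,\mathcal{O}_X(tH)\bigr)$, and then to pin those roots down by elementary combinatorics. Since $X$ is projective with $\Pic X=\Z H$, after replacing $H$ by $-H$ if needed we may assume $H$ is ample, so $H^n>0$; write $L_i=\mathcal{O}_X(a_iH)$ with $a_i\in\Z$. By Hirzebruch--Riemann--Roch, $P$ is a polynomial of degree exactly $n$, with leading coefficient $H^n/n!>0$, constant term $P(0)=\chi(\mathcal{O}_X)$, and degree-$(n-1)$ coefficient $-\tfrac{1}{2(n-1)!}\,K_X\cdot H^{n-1}$. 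As $\chi(L_j,L_i)=\chi(X,L_j^{\vee}\otimes L_i)=P(a_i-a_j)$, the hypothesis that $(L_0,\dots,L_n)$ is numerically exceptional reads exactly $P(a_i-a_j)=0$ for all $j>i$ and $P(0)=1$; in particular the $a_i$ are pairwise distinct (were $a_i=a_j$ with $i<j$ we would get $0=P(a_i-a_j)=P(0)=1$). Twisting the collection by $L_0^{\vee}$ alters neither the integers $\chi(L_i,L_j)$ nor $X$, so we may assume $a_0=0$. Then $D:=\{\,a_i-a_j:i<j\,\}$ contains the $n$ distinct nonzero integers $-a_1,\dots,-a_n$, and since $D$ consists of roots of $P$ while $P(0)\ne0$, it has at most $n$ elements; hence $D=\{-a_1,\dots,-a_n\}$, these are exactly the roots of $P$, and $P(t)=\tfrac{H^n}{n!}\prod_{i=1}^{n}(t+a_i)$.

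The heart of the argument is to force the $a_i$ into a monotone arithmetic progression of common difference $1$. For this I would use the elementary fact that a set of $n+1$ distinct integers has at least $n$ distinct positive pairwise differences, with equality exactly for arithmetic progressions, applied to $\{0,a_1,\dots,a_n\}$: for the unique $i<j$ indexing a given pair of elements, the associated positive difference is $a_i-a_j$ or $-(a_i-a_j)$, hence lies in $\{\,|d|:d\in D\,\}$, a set of at most $|D|\le n$ positive integers. Comparing the two bounds, equality holds everywhere, so $\{0,a_1,\dots,a_n\}$ is an arithmetic progression, say $\{kd:-j\le k\le n-j\}$ with $d\ge1$ and $0\le j\le n$, and no two elements of $D$ are opposite. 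A direct computation with this progression shows the latter condition forces $j=0$ or $j=n$: in the first case every $a_i-a_j$ with $i<j$ is negative and $\{a_1,\dots,a_n\}=\{d,2d,\dots,nd\}$, so $a_i=id$; in the second every such difference is positive and $a_i=-id$. Substituting $t=0$ into $P(t)=\tfrac{H^n}{n!}\prod_i(t+a_i)$ gives $1=P(0)=H^nd^n$ in the first case and $1=(-1)^nH^nd^n$ in the second; since $H^n$ and $d$ are positive integers this forces $H^n=d=1$ in both cases (the second additionally forcing $n$ even). Finally, writing $K_X=mH$ and comparing the degree-$(n-1)$ coefficients of $P$ gives $\sum_i a_i=-\tfrac{n}{2}m$, whence $m=-(n+1)$ in the increasing case and $m=n+1$ in the decreasing case.

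It remains to rule out the decreasing case and conclude. There $K_X=(n+1)H$ is ample, so $X$ is not Fano, and moreover $n$ is even; hence under either hypothesis of the Proposition ($n$ odd, or $X$ Fano) only the increasing case survives. In that case $\Pic X=\Z H$ with $H$ ample, $H^n=1$, and $-K_X=(n+1)H$, so $X$ is an $n$-dimensional Fano variety of Fano index at least $n+1$. By the Kobayashi--Ochiai theorem the Fano index of an $n$-dimensional Fano variety is at most $n+1$, with equality only for projective space; applying this over $\bar k$ gives $X_{\bar k}\cong\PP^n_{\bar k}$, and since $H^n=1$ the ample generator $H$ of $\Pic X$ becomes $\mathcal{O}(1)$ on $X_{\bar k}$, so that the Brauer class of the Brauer--Severi variety $X$ vanishes and $X\cong\PP^n$ over $k$. (In characteristic zero one may instead observe $\chi(\mathcal{O}_X(tH))=\binom{t+n}{n}$, deduce $h^0(X,\mathcal{O}_X(H))=n+1$ from Kodaira vanishing, and invoke Fujita's classification of polarized pairs of $\Delta$-genus zero.) The main obstacle is the combinatorial step of the second paragraph --- extracting a monotone arithmetic progression with common difference $1$ and $H^n=1$ from the purely numerical data; granting that, reading off the subleading coefficient and quoting Kobayashi--Ochiai is routine.
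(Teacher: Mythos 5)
Your proof is correct and follows essentially the same route as the paper's: both read off from the Hilbert polynomial $P(t)=\chi(\mathcal{O}_X(tH))$ that the twisting integers form an arithmetic progression of common difference $1$ with $H^n=1$ and $c_1(X)=(n+1)H$, then invoke Kobayashi--Ochiai over an algebraically closed overfield and split the resulting Severi--Brauer variety. The only differences are presentational: you normalize $H$ to be ample at the outset (so the hypotheses ``$n$ odd or $X$ Fano'' are used to exclude the case $K_X=(n+1)H$ with $n$ even, rather than to identify the ample generator at the end, as in the paper), you spell out the arithmetic-progression step that the paper dismisses with ``this easily implies'', and you split the Severi--Brauer variety by descending $\mathcal{O}(1)$ rather than via the degree-one zero-cycle $H^n$.
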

\begin{proof}
  Since $\Pic X = \Z H$, we may write $L_i = O_X(a_iH)$ for some
  integers $a_i$. By Riemann--Roch, $\chi(O_X(aH))$ is a polynomial
  $P$ with rational coefficients of degree $n$ in the variable
  $a$. Since $\chi(O_X) =1$, this polynomial vanishes at most $n$
  times. We know, by semi-orthogonality, that $\chi(L_j,L_i) = \chi(L_i\otimes (L_j)^{-1}) =
  P(a_i-a_j) = 0$ for all $i<j$. Therefore the set $\{a_i - a_j :
  0\leq i < j \leq n\}$ has order at most $n$. Moreover, $a_i \neq
  a_j$ for all $i\neq j$, because otherwise $\chi(O_X(a_iH - a_jH)) =
  \chi(O_X)=1$ would not be zero. This easily implies that there exist
  an integer $m$ and a non-zero integer $k$ such that $a_i = m+ki$ for
  all $0 \leq i \leq n$.  Up to replacing $H$ with $-H$ if necessary,
  we may assume that $k$ is a positive integer. Now the polynomial $P$
  vanishes exactly at $-lk$ for $1 \leq l \leq n$. By Riemann--Roch,
  we have
\begin{equation}\label{eq RR}
  P(a) = \chi(O_X(aH)) = \frac{\deg(H^n)}{n!}a^n + \frac{\deg(H^{n-1}
    \cdot c_1(X))}{2(n-1)!} a^{n-1} + \cdots + \chi(O_X).
\end{equation} Therefore, $n! = \deg(H^n)\cdot k \cdot (2k) \cdots
(nk) =\deg(H^n)k^n n!$.  Having in mind that $k$ is positive, it
follows that $k=1$ and then that
$\deg (H^n) = 1$.

We can also compute $\deg(H^{n-1}\cdot c_1(X))$\,: by looking at the
coefficient of $a^{n-1}$ in \eqref{eq RR}, we find $\sum_{l=1}^n l =
\frac{n}{2} \deg(H^{n-1}\cdot c_1(X))$, which gives $\deg(H^{n-1}\cdot
c_1(X)) = n+1$. Therefore $c_1(X) = (n+1)H$.  We now distinguish
whether $X$ is odd-dimensional, or Fano. In the latter case, by
definition, $c_1(X)$ is ample. In the odd-dimensional case, since either $H$ or
$-H$
is ample and since $\deg(H^n)=1$, we find that $H$ is ample. In both
cases, $c_1(X)$ is $n+1$ times an ample divisor.
By \cite{ko}, it follows that the base-change of $X$ to the complex
numbers is isomorphic to $\PP_\C^n$. Thus $X$ is a Severi--Brauer
variety. It has a zero-cycle of degree 1, namely $H^n$. Therefore it
is split, \emph{i.e.}, $X$ is isomorphic to $\PP^n$.
\end{proof}

\begin{rmk}
When $n$ is odd, Proposition \ref{prop projective} shows that, provided
$\mathrm{Pic}(X)$ is torsion-free of rank $1$, the
assumptions of Theorem \ref{thm projective} can be relaxed,
essentially by dropping the condition that $\langle
L_0,\ldots,L_n\rangle$ is full. This is possibly related to the fact that there
are no fake projective spaces of odd dimension\,; \emph{cf.} \cite{py}.
As a corollary to Proposition \ref{prop projective}, one sees that a quadric
hypersurface of odd dimension
$n\geq 3$ or a non-split Severi--Brauer variety of odd dimension does
not admit a numerically exceptional collection of line-bundles of length $n+1$.
\end{rmk}

\section{Full exceptional collections on surfaces and Lefschetz motives}

Let $X$ be a smooth projective variety defined over a field $k$. Assume that $X$
has a full exceptional collection. Then, by flat base-change \cite{kuznetsov},
for a universal domain $\Omega$ containing $k$ (this means that $\Omega$ is
algebraically closed of infinite transcendence degree over its prime subfield), 
the pull-back $X_\Omega$ of $X$ along $\Spec \, \Omega \rightarrow \Spec \, k$
also has a full exceptional collection. It follows that $K_0(X_\Omega)$ is of finite
rank (see e.g. Lemma \ref{lem torsionfree}), and hence by applying the Chern
character that the Chow ring $\CH^*(X_\Omega) \otimes_\Z \Q$ is a
finite-dimensional vector space over $\Q$. By Kimura \cite{kimura2}, it follows
that the Chow
motive with rational coefficients of $X_\Omega$ is isomorphic to a direct sum of
Lefschetz motives. (In fact, one may choose such an isomorphism to be defined over
the base field $k$, so that  the Chow motive with rational coefficients of $X$
is isomorphic to a sum of Lefschetz motives\,; see \cite[Corollary 3.5]{vial}). Such a
result was also obtained in \cite{mt}, via the theory of non-commutative
motives. 
The main result of this section is Theorem \ref{thm surface}\,: we
show
that when $S$ is a surface with a full exceptional collection, then the result
above can be improved by showing that the \emph{integral} Chow motive of $S$ is
isomorphic to a sum of Lefschetz motives. 
 The question of understanding the links between the derived category of
coherent sheaves on $X$ and the Chow motive of $X$ is of course not new and we
refer to Orlov's \cite{orlov2}. \medskip

\subsection{The Riemann--Roch formula for surfaces}
In this section, $S$ is a smooth projective surface defined over a
field $k$.  Let $K_S$ be the canonical divisor on
$S$ and let $D$ be a divisor on $S$. The Riemann--Roch formula is
\begin{equation*}
  \chi(O_S(D)) = \frac{1}{2} D\cdot(D-K_S) + \chi(O_S).
\end{equation*}
More generally, there is a Riemann--Roch formula for any object $E$ in $D^b(S)$.
The rank of an object $E$ in $D^b(S)$ is defined as follows\,: if $E^\bullet$ is a complex
representing $E$, then $\rk E := \sum_i (-1)^i \rk E^i$. 
If $E$ and $F$ are two objects in $D^b(S)$ of respective ranks
$e$ and $f$, then the Riemann--Roch formula is
\begin{footnotesize}
$$\chi(E,F) = ef\chi(O_S) + \frac{1}{2}\left(fc_1(E)^2
+ec_1(F)^2-2c_1(E)c_1(F)\right)- \frac{1}{2}
K_S\cdot\left(ec_1(F)-fc_1(E)\right) -  \left(fc_2(E)+ec_2(F)\right).$$
\end{footnotesize}

Assume from now on that $\chi(O_S)=1$.
If $E$ is an
object in $D^b(S)$ of rank $1$ that is numerically exceptional, that is
$\chi(E,E)=1$, then the above Riemann--Roch formula gives $c_2(E) = 0$. 
This justifies referring to  a numerically exceptional object of rank one as a \emph{numerical line-bundle}.
Finally, note that if
$E$ and $F$ are two numerical line-bundles in $D^b(S)$,
then the Riemann--Roch formula takes the simple form\,:
\begin{equation}\label{RR}
  \chi(E,F) = \frac{1}{2}\left(c_1(F)-c_1(E)\right)^2 - \frac{1}{2}
K_S\cdot\left(c_1(F)-c_1(E)\right) + 1.
\end{equation}
\medskip

\subsection{Numerically exceptional collections of line-bundles and the Riemann--Roch formula}
If $(E_0,E_1,\ldots,
E_{r+1})$ is a numerically exceptional collection, then by definition the matrix $\big(\chi(E_i,E_j)\big)_{0\leq i,j \leq r+1}$ is upper-triangular with $1$'s as diagonal entries. If moreover one assumes that the objects $E_i$ have rank $1$ for all $i$, then the Riemann--Roch formula relates the upper-triangularity of the matrix $\big(\chi(E_i,E_j)\big)_{0\leq i,j \leq r+1}$ to the trigonality of the matrix $(D_i\cdot D_j)_{1\leq i,j\leq r+1}$, where $D_i := c_1(E_i) - c_1(E_{i-1})$\,:

\begin{prop} \label{prop intersection matrix}
 Let $S$ be a smooth projective surface with $\chi(O_S)=1$ and let $r$ be a non-negative integer. 
The following statements are equivalent.
\begin{enumerate}[(i)]
\item There exists a collection $(E_0,E_1,\ldots,
E_{r+1})$ of line-bundles which is a numerically exceptional
collection in $D^b(S)$.
\item There exists a collection $(E_0,E_1,\ldots,
E_{r+1})$ of numerical line-bundles in $D^b(S)$ which is a numerically exceptional
collection.
\item There exist divisors $D_1,\ldots, D_{r+1} \in \CH^1(S)$ such that
$K_S\cdot D_i = -2-(D_i)^2$ for all $i$ and such that the
intersection matrix $(D_i\cdot D_j)_{1 \leq i,j \leq r+1}$ has the trigonal form
	$$\big(D_i\cdot D_j\big)_{1\leq i,j \leq r+1} =  \left( \begin{array}{ccccc}
	a_1 & 1 & &    \\
	1 & a_2 & \ddots &    \\
	& \ddots &  \ddots & 1\\
	& &   1 & a_{r+1}\end{array} \right)$$ where all blank entries consist of zeroes.
\end{enumerate}
\end{prop}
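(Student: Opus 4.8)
The plan is to prove the cycle $(i) \Rightarrow (ii) \Rightarrow (iii) \Rightarrow (i)$, with the bulk of the real content in the equivalence between the upper-triangularity of the Euler matrix and the trigonality of the intersection matrix, via the Riemann--Roch formula \eqref{RR}. The implication $(i) \Rightarrow (ii)$ is immediate, since an honest line-bundle is a rank-one object with $c_2 = 0$, hence a numerical line-bundle, and a numerically exceptional collection of line-bundles is in particular a numerically exceptional collection of numerical line-bundles.

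For $(ii) \Rightarrow (iii)$: given numerical line-bundles $E_0, \ldots, E_{r+1}$ forming a numerically exceptional collection, set $d_i := c_1(E_i) \in \mathrm{N}^1(S)$ and $D_i := d_i - d_{i-1}$ for $1 \le i \le r+1$. Since each $E_i$ is numerically exceptional of rank one, the Riemann--Roch formula of the previous subsection forces $c_2(E_i) = 0$, and \eqref{RR} gives, for any $i \le j$,
$$\chi(E_i, E_j) = \tfrac{1}{2}(d_j - d_i)^2 - \tfrac{1}{2} K_S \cdot (d_j - d_i) + 1.$$
Note $d_j - d_i = D_{i+1} + D_{i+2} + \cdots + D_j$ for $i < j$. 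The diagonal case $i = j$ gives $\chi(E_i, E_i) = 1$ automatically. For $j = i+1$, the condition $\chi(E_{i+1}, E_i) = 0$ (which is the $j > i$ semi-orthogonality condition applied to the adjacent pair, here with $d_i - d_{i+1} = -D_{i+1}$) reads $\tfrac{1}{2} D_{i+1}^2 + \tfrac{1}{2} K_S \cdot D_{i+1} + 1 = 0$, i.e.\ $K_S \cdot D_{i+1} = -2 - D_{i+1}^2$, which is the required relation between $K_S$ and each $D_i$. Using this relation, $\tfrac{1}{2} D^2 - \tfrac{1}{2} K_S \cdot D + 1 = D^2 + 2$ whenever $D$ is one of the $D_i$; more generally for a sum $D = D_{i+1} + \cdots + D_j$ one expands $D^2$ and $K_S \cdot D = \sum_{m=i+1}^{j} (-2 - D_m^2)$ and finds, after the algebra, that $\chi(E_i, E_j)$ and $\chi(E_j, E_i)$ are governed by the off-diagonal partial sums $\sum_{i < m < m' \le j} D_m \cdot D_{m'}$ together with telescoping terms. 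Imposing vanishing of all the $\chi(E_j, E_i)$ for $j > i$ is then equivalent, by a downward induction on $j - i$, to $D_m \cdot D_{m'} = 0$ whenever $|m - m'| \ge 2$ and $D_m \cdot D_{m+1} = 1$ for all $m$; the base case $j - i = 1$ was handled above, and the inductive step isolates $D_{i+1} \cdot D_j$ (the only new cross term) once all closer pairs are known to satisfy the trigonal pattern. This yields the matrix in the stated form with $a_m = D_m^2$.

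For $(iii) \Rightarrow (i)$: given divisors $D_1, \ldots, D_{r+1} \in \CH^1(S)$ with $K_S \cdot D_i = -2 - D_i^2$ and intersection matrix of the displayed trigonal shape, define line-bundles by $E_0 := O_S$ and $E_i := O_S(D_1 + \cdots + D_i)$ for $1 \le i \le r+1$. These are honest line-bundles, and running the computation of the previous paragraph in reverse — using \eqref{RR}, the hypothesis $K_S \cdot D_i = -2 - D_i^2$, and the vanishing/unit pattern of the $D_m \cdot D_{m'}$ — shows that $\chi(E_i, E_i) = 1$ for all $i$ and $\chi(E_j, E_i) = 0$ for all $j > i$, so $(E_0, \ldots, E_{r+1})$ is a numerically exceptional collection of line-bundles.

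The main obstacle is the bookkeeping in the bidirectional step between the Euler matrix and the intersection matrix: one must carefully expand $\chi(E_i, E_j)$ for non-adjacent indices as a quadratic expression in the $D_m$'s, use the relation $K_S \cdot D_m = -2 - D_m^2$ to eliminate all dependence on $K_S$, and then check that the resulting system of equations $\{\chi(E_j, E_i) = 0 : j > i\}$ is equivalent to the clean statement ``$D_m \cdot D_{m+1} = 1$ and $D_m \cdot D_{m'} = 0$ for $|m - m'| \ge 2$''. The induction on $|i - j|$ is what makes this equivalence go through cleanly in both directions; the only subtlety is verifying that at each stage exactly one new off-diagonal intersection number appears, so that the equation can be solved for it, and that no degeneracy (e.g.\ some $D_i = 0$, which is excluded since then $\chi(E_{i-1}, E_i) = \chi(O_S) = 1 \ne 0$) interferes.
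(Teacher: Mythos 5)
Your proposal is correct and follows essentially the same route as the paper: the cycle $(i)\Rightarrow(ii)\Rightarrow(iii)\Rightarrow(i)$, with the Riemann--Roch formula \eqref{RR} converting the vanishing of $\chi(E_j,E_i)$ for $j>i$ into the trigonal intersection pattern by increasing the gap $j-i$ one step at a time (the paper phrases your induction as ``taking $j=i+1$, then $j=i+2$, then $j=i+3$, and so on''), and the same construction $E_i:=O_S(D_1+\cdots+D_i)$ for the converse. The only nitpick is that your induction on $j-i$ runs upward from the base case $j-i=1$, not ``downward'' as you wrote.
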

\begin{proof} $(i) \Rightarrow (ii)$\,: This is obvious. \medskip
	
	$(ii) \Rightarrow (iii)$\,: Assume that the collection of numerical line-bundles
$(E_0,E_1,\ldots,
E_{r+1})$ is  numerically exceptional and let us  define, for $0<i\leq r+1$, $D_i :=
c_1(E_i)- c_1(E_{i-1}) \in \CH^1(S)$.
  By orthogonality, we have, for all $0 \leq i<j \leq r+1$,
$\chi(E_j,E_i) = 0$.  By Riemann--Roch \eqref{RR}, we find for all $0 \leq i<j \leq r+1$, 
\begin{equation*}\label{E:RR}
(D_{i+1} + D_{i+2} + \cdots+ D_j)^2
+ K_S\cdot (D_{i+1} + D_{i+2} + \cdots +  D_j) = -2.
\end{equation*}
Taking $j=i+1$ yields $K_S\cdot D_i = -2-(D_i)^2$ for all $1 \leq i \leq r+1$. Taking then $j=i+2$ yields $D_i \cdot D_j = 1$ for all $1\leq i,j \leq r+1$ such that $|i-j|=1$. Finally, taking $j=i+3$, $j=i+4$, and so on, gives  $D_i \cdot D_j = 0$ for all $1\leq i,j\leq r+1$ such that $|i-j|>1$. 
\medskip
  
  $(iii) \Rightarrow (i)$\,:
   We define $E_0 := O_S$, and $E_i := O_S(D_1 + \cdots + D_i)$ for all
$1\leq i \leq r+1$. Since $\chi(O_S)=1$, we have
$\chi(E_i,E_i)=1$ for all $i$. On the other hand, 
  by Riemann--Roch \eqref{RR}, one immediately finds
that $\chi(E_j,E_i)=0$ for $0\leq i<j \leq r+1$, thus showing that the collection
$(E_0,E_1,\ldots,
  E_{r+1})$ is  numerically exceptional.
\end{proof}

\subsection{Numerically exceptional collections of maximal length and the N\'eron--Severi lattice}
In order to prove Theorem \ref{thm surface}, we will have to prove that the existence of a 
(numerically) exceptional collection of maximal length consisting of objects in $D^b(S)$ implies that the N\'eron--Severi lattice $\operatorname{N}^1(S)$ is unimodular. For that matter,  M.~Perling \cite[Corollary 10.9 \& Remark 10.12]{perling} recently proved\,:

\begin{thm}[Perling \cite{perling}] \label{thm perling} 
Let $S$ be a smooth projective surface with $\chi(O_S)=1$. Then any numerically
exceptional collection of maximal length on $S$ can be transformed by mutations
into a numerically
exceptional collection of maximal length $(Z_0,Z_1,\ldots,Z_t, F_0,\ldots,F_{\rho-t})$, where $\rk Z_i = 0$ for $0\leq i \leq t$ and $\rk F_j = 1$ for $0\leq j \leq \rho-t$, and $\rho-t = 2$ or $3$.

Moreover, if $p_g=q=0$ and $\rho = b_2$ (or more generally if $\rho = b_2-2b_1$), then any numerically
exceptional collection of maximal length on $S$ can be transformed by mutations
into a numerically exceptional collection of maximal length that consists of rank one objects.
\end{thm}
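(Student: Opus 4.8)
To prove Theorem \ref{thm perling}, the plan is to translate everything into lattice theory and then run a normalization algorithm on mutation‑equivalence classes of collections; the details are carried out in \cite{perling}. Riemann--Roch makes the numerical Grothendieck group $K_0^{\mathrm{num}}(S)$ into a nondegenerate lattice of rank $\rho+2$ for the (non‑symmetric) Euler pairing $\chi$, carrying a canonical three‑step filtration by dimension of support, $\Z\cdot[\calO_x]=G_2\subset G_1\subset G_0=K_0^{\mathrm{num}}(S)$, where $G_1$ is the subgroup of rank $0$ classes; the graded pieces are $G_2\cong\Z$, $G_1/G_2\cong\mathrm{N}^1(S)$ (via $c_1$) and $G_0/G_1\cong\Z$ (via $\rk$), and the Riemann--Roch formula shows that $G_2$ is exactly the radical of $\chi|_{G_1}$ while $\chi$ induces, up to sign, the intersection form on $G_1/G_2$. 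Under this dictionary a numerically exceptional collection of maximal length is precisely an ordered $\Z$‑basis $(e_0,\dots,e_{\rho+1})$ of $K_0^{\mathrm{num}}(S)$ with upper‑unitriangular $\chi$‑Gram matrix, and a mutation acts on such bases by $(e_i,e_{i+1})\mapsto(e_{i+1},\,e_i-\chi(e_i,e_{i+1})e_{i+1})$ and its left‑hand analogue (for one sign convention), preserving both the $\Z$‑span and the Gram type. So the whole statement becomes one about this lattice.

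Next I would normalize the ranks. Since $\rk\colon K_0^{\mathrm{num}}(S)\to\Z$ is a homomorphism, an adjacent mutation sends the pair of ranks $(r_i,r_{i+1})$ to $(r_{i+1},\,r_i-\chi(e_i,e_{i+1})r_{i+1})$, and since $\chi(\calO_S)=1$ forces $\rk$ to be surjective, a Euclidean‑type reduction with these moves brings the collection to one in which every object has rank $0$ or $1$. A bubble sort then moves every rank $0$ object in front of every rank $1$ object: whenever a rank $0$ object immediately follows a rank $1$ one, the right mutation that swaps them leaves both ranks in $\{0,1\}$, so the process terminates at a collection $(Z_0,\dots,Z_t,F_0,\dots,F_{\rho-t})$ with $\rk Z_i=0$ and $\rk F_j=1$ (at least one rank $1$ object must survive, and $t\le\rho$ because the rank $0$ part of a basis lies independently in $G_1$, which has rank $\rho+1$). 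What remains is to pin down the size of the rank $1$ block.

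The heart of the matter is the bound $\rho-t\in\{2,3\}$, i.e.\ that the final block has $3$ or $4$ objects. Put $B_j:=c_1(F_j)-c_1(F_{j-1})\in\mathrm{N}^1(S)$ for $1\le j\le\rho-t$. Semi‑orthogonality inside the rank $1$ block together with Riemann--Roch gives, exactly as in the computation of Proposition \ref{prop intersection matrix}, the tridiagonal relations $B_i\cdot B_{i+1}=1$, $B_i\cdot B_j=0$ for $|i-j|>1$ and $K_S\cdot B_j=-2-B_j^2$; and the vanishing $\chi(F_j,Z_i)=0$ forced by the ordering unwinds, again by Riemann--Roch, to $B_l\cdot c_1(Z_i)=0$ for all $l$, so the $c_1(Z_i)$ lie in the orthogonal complement of the sublattice $\Lambda\subseteq\mathrm{N}^1(S)$ spanned by the $B_j$. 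Thus $\Lambda$ is a rank $\rho-t$ ``tridiagonal'' sublattice, and $\mathrm{N}^1(S)$ is generated up to finite index by $\Lambda$ and $\Lambda^{\perp}$. The strategy from here is a descending induction: after mutating the rank $0$ block so that each $Z_i$ is numerically the structure sheaf of a $(-1)$‑configuration, deleting one such $Z_i$ is a ``numerical blow‑down'' that lowers $\rho-t$ by $1$ while preserving the tridiagonal shape of the rank $1$ block, so the block shrinks until it is the numerical K‑theory of a minimal surface carrying such a configuration --- $\PP^2$, giving $\rho-t=2$, or a Hirzebruch surface or $\PP^1\times\PP^1$, giving $\rho-t=3$. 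Making this precise --- classifying which tridiagonal configurations and which $(-1)$‑blow‑downs actually occur, and ruling out both shorter and longer blocks --- is the main obstacle; it is a genuine case analysis on the pair $(\mathrm{N}^1(S),K_S)$, in the spirit of the Hille--Perling classification of toric systems, and a soft rank count does not suffice.

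For the final assertion, suppose $\rho=b_2-2b_1$ (e.g.\ $p_g=q=0$ and $\rho=b_2$). Then $\mathrm{N}^1(S)$ is, numerically, ``everything'', and this is exactly the room needed to realize each rank $0$ class $[Z_i]$ in the normal form above as a difference of two line bundles compatibly with the rest of the collection. Mutating the pair $(Z_i,F_0)$ then trades the rank $0$ object $Z_i$ for a rank $1$ object without disturbing the others; iterating over $i$ empties the rank $0$ block and produces a numerically exceptional collection of maximal length consisting of rank $1$ objects. Here the obstacle is again lattice‑theoretic: one must check that the hypothesis $\rho=b_2-2b_1$ is precisely what guarantees that each such trade can be performed.
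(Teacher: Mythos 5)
The paper does not prove this statement at all: it is imported verbatim from Perling, with the precise pointer \cite[Corollary 10.9 \& Remark 10.12]{perling} given in the sentence introducing it, and nothing later in the paper re-derives it. So your proposal can only be measured against Perling's own argument, and as a reconstruction it is an outline whose load-bearing steps are left open. The framework you set up is the correct one --- the filtration of $K_0^{\mathrm{num}}(S)$ by dimension of support with graded pieces $\Z$, $\mathrm{N}^1(S)$, $\Z$, mutations acting as the base changes $(e_i,e_{i+1})\mapsto(e_{i+1},\,e_i-\chi(e_i,e_{i+1})e_{i+1})$ --- and the bookkeeping in your ``bubble sort'' and in the tridiagonal relations for the $B_j$ and the orthogonality $B_l\cdot c_1(Z_i)=0$ is consistent with what the paper itself extracts from Perling in the proof of Proposition \ref{P:unimodularity}.

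The genuine gap is the rank normalization. A mutation subtracts the \emph{fixed} integer $\chi(e_i,e_{i+1})$ times $r_{i+1}$ from $r_i$; you do not get to choose the quotient, so ``a Euclidean-type reduction with these moves'' is not available. Iterated mutations act on the rank vector by a constrained nonlinear recursion --- already on $\PP^2$ this is the Markov-type dynamics under which the ranks of exceptional objects grow without bound in most mutation directions --- and proving that every numerically exceptional collection of maximal length can be mutated \emph{down} to ranks in $\{0,1\}$ is precisely one of the substantive assertions of the theorem, not a preprocessing step. The other two substantive points, the bound $\rho-t\in\{2,3\}$ and the claim that $\rho=b_2-2b_1$ lets one absorb the rank-zero block, you explicitly defer (``the main obstacle'', ``one must check''). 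What your write-up establishes is therefore only a reduction of the statement to the three assertions that constitute Perling's actual proof; as a self-contained argument it is incomplete, and as a summary of the cited source it should be presented as such rather than as a proof.
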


From the arguments developed by Perling, one can extract the following\,: 

\begin{prop}\label{P:unimodularity}
	Let $S$ be a smooth projective surface with $\chi(O_S)=1$. Assume that $S$ has a numerically
	exceptional collection of maximal length. Then the N\'eron--Severi lattice $\operatorname{N}^1(S)$ is unimodular, and 
	$\rho = b_2 - 2b_1 $ modulo $8$.
\end{prop}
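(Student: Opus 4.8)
The plan is to establish the two conclusions in turn, both flowing from a single observation: a numerically exceptional collection of maximal length supplies a $\Z$-basis of $K_0^{\mathrm{num}}(S)$ in which the Euler pairing is unipotent upper triangular. Indeed, if $(E_0,\dots,E_N)$ is numerically exceptional of maximal length then its classes span $K_0^{\mathrm{num}}(S)$, so there are $N+1 = \rk K_0^{\mathrm{num}}(S)$ of them, and the Gram matrix $(\chi(E_i,E_j))$ is upper triangular with $1$'s on the diagonal, hence of determinant $1$. Comparing this with the Gram determinant of $\chi$ computed in a genuine $\Z$-basis of $K_0^{\mathrm{num}}(S)$ --- an integer, which gets multiplied by the square of the index under passage to a finite-index subgroup --- one concludes that the classes $[E_i]$ in fact form a $\Z$-basis and that $\chi$ is a unimodular (a priori non-symmetric) bilinear form on $K_0^{\mathrm{num}}(S)$.

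To pass from this to $\operatorname{N}^1(S)$ I would use the filtration of $K_0^{\mathrm{num}}(S)$ by codimension of support, $F^2 \subseteq F^1 \subseteq F^0 = K_0^{\mathrm{num}}(S)$, where $F^1$ is the subgroup of classes of rank $0$ and $F^2$ that of classes supported in dimension $0$. One has $F^0/F^1 \cong \Z$ via the rank, $F^1/F^2 \cong \operatorname{N}^1(S)$ via $c_1$, and $F^2 \cong \Z$; since $\operatorname{N}^1(S)$ is torsion-free these extensions split, yielding a $\Z$-basis $\calO_S, w_1,\dots,w_\rho, v_2$ of $K_0^{\mathrm{num}}(S)$ in which $c_1$ sends $w_1,\dots,w_\rho$ to a basis of $\operatorname{N}^1(S)$ and $v_2$ generates $F^2$. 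The Riemann--Roch formula recalled in Section~2.1 then evaluates $\chi$ on this basis: on rank-$0$ classes $\chi(w_i,w_j) = -\,c_1(w_i)\cdot c_1(w_j)$ (the antisymmetrisation of $\chi$ vanishes on $F^1$), $v_2$ pairs to $0$ with each $w_i$ on both sides, $\chi(v_2,v_2) = 0$, and $\chi(\calO_S,v_2) = \chi(v_2,\calO_S) = d$, where $d \ge 1$ is the index of $S$ (so $d=1$ if $k$ is algebraically closed). Expanding the determinant of the resulting block matrix along the $v_2$-row and the $v_2$-column collapses it to $\pm\, d^2 \det(c_1(w_i)\cdot c_1(w_j))$; unimodularity of $\chi$ forces this to be $\pm 1$, so $d=1$ and $\det(c_1(w_i)\cdot c_1(w_j)) = \pm 1$, i.e.\ the intersection form on $\operatorname{N}^1(S)$ is unimodular.

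For the congruence I would invoke van der Blij's lemma: a characteristic vector $w$ of a unimodular lattice $L$ satisfies $w\cdot w \equiv \operatorname{sign}(L) \pmod 8$. Regarded in $\operatorname{N}^1(S)$, the canonical class $K_S$ is characteristic, since Riemann--Roch gives $D^2 - D\cdot K_S = 2(\chi(\calO_S(D)) - \chi(\calO_S)) \equiv 0 \pmod 2$ for every divisor $D$; and the Hodge index theorem gives $\operatorname{sign}(\operatorname{N}^1(S)) = 1-(\rho-1) = 2-\rho$. Hence $K_S^2 \equiv 2-\rho \pmod 8$. On the other hand Noether's formula $12\,\chi(\calO_S) = K_S^2 + c_2(S)$, together with $\chi(\calO_S)=1$ and $c_2(S) = e(S) = 2 - 2b_1 + b_2$, gives $K_S^2 = 10 + 2b_1 - b_2$. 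Comparing the two expressions modulo $8$ yields $\rho \equiv b_2 - 2b_1 \pmod 8$.

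The step I expect to be the main obstacle is the second one. It requires the precise structure of $K_0^{\mathrm{num}}(S)$ of a surface over an arbitrary field --- that the graded pieces of the support filtration are $\Z$, $\operatorname{N}^1(S)$, $\Z$, with $\chi$ computed on them by Riemann--Roch --- and it requires carrying the index $d$ of $S$ along, it being only a posteriori, from unimodularity, that $d = 1$. This is exactly the linear algebra organised by Perling; alternatively, one may first apply Perling's normal form (Theorem~\ref{thm perling}) to replace the given collection by one consisting of rank-$0$ and rank-$1$ objects, for which the Gram matrix of $\chi$ is presented in a shape making the determinant computation above immediate.
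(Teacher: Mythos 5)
Your proof is correct, but for the unimodularity statement it takes a genuinely different route from the one in the paper. The paper first invokes Perling's mutation theorem (Theorem \ref{thm perling}) to replace the collection by one consisting of rank-zero objects $Z_i$ and rank-one objects $F_j$, then uses Riemann--Roch to show that the classes $c_1(Z_i)$ together with the differences $D_j = c_1(F_j)-c_1(F_{j-1})$ span a full-rank sublattice of $\operatorname{N}^1(S)$ that is visibly unimodular (the $c_1(Z_i)$ give an orthogonal $\langle -1\rangle$-summand, and the $D_j$ a degenerate trigonal matrix to which Proposition \ref{P:unimodular} applies). You instead compute the Gram determinant of $\chi$ on $K_0^{\mathrm{num}}(S)$ twice: once in the basis furnished by the exceptional collection, where it is $1$ by unitriangularity --- and your index-squared observation correctly upgrades ``spanning up to finite index'' to ``$\Z$-basis'' --- and once in a basis adapted to the filtration by codimension of support, where block expansion along the $v_2$-row and $v_2$-column collapses it to $\pm\, d^2\det\bigl(c_1(w_i)\cdot c_1(w_j)\bigr)$. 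This is essentially the argument the paper itself records, and attributes to Kuznetsov, in the unnumbered remark immediately following the proposition; that remark is stated only over an algebraically closed field, whereas your version carries the index $d$ through and extracts $d=1$ as part of the conclusion, which both extends the argument to arbitrary fields and recovers Corollary \ref{cor zerocycle} for free rather than as a separate consequence. What your route requires in exchange is the identification of the graded pieces of the support filtration on $K_0^{\mathrm{num}}(S)$ as $\Z$, $\operatorname{N}^1(S)$, $\Z$ with $\chi$ computed by the Riemann--Roch formula of Section 2.1; you flag this as the delicate point, and it does check out against that formula (all the off-diagonal vanishings $\chi(v_2,w_i)=\chi(w_i,v_2)=\chi(v_2,v_2)=0$ and the symmetry $\chi(w_i,w_j)=-c_1(w_i)\cdot c_1(w_j)$ are immediate once all ranks involved are zero, and $\chi(\calO_S,v_2)=\chi(v_2,\calO_S)$ equals the index). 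What the paper's route buys instead is an explicit trigonal presentation of a full-rank unimodular sublattice, which is the form of the statement needed downstream in Proposition \ref{prop unimodular} and Theorem \ref{thm criterion}. The congruence $\rho\equiv b_2-2b_1 \pmod 8$ is proved identically in both arguments: Noether's formula against van der Blij's lemma plus the Hodge index theorem, with $K_S$ characteristic by Wu's formula as you note.
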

\begin{proof} 
	By Theorem \ref{thm perling}, it suffices to prove that  if $(Z_0,Z_1,\ldots,Z_t, F_0,\ldots,F_{s})$ is a numerically exceptional collection of maximal length, with $\rk Z_i=0$ for $0\leq i \leq t$ and $\rk F_j = 1$ for $0\leq j \leq s$, then the N\'eron--Severi lattice is unimodular.
	
	Suppose $Z$ is a numerically exceptional object of rank zero. From the Riemann--Roch formula, one immediately sees that $c_1(Z)^2=-1$. 
	Suppose now that $(Z_1,Z_2)$ is a numerically exceptional collection consisting of objects of rank zero. From the Riemann--Roch formula again, one finds that $c_1(Z_1)\cdot c_1(Z_2) = 0$. 
	
	Consider now a numerically exceptional collection $(Z_0,Z_1,\ldots,Z_t, F_0,\ldots,F_{s})$, where $\rk Z_i=0$ for $0\leq i \leq t$. The Riemann--Roch formula gives for all $i$ and all $j$ 
	$$2c_1(Z_i) \cdot c_1(F_j) = -\rk (F_j) \big(K_X\cdot c_1(Z_i) +1 + 2c_2(Z_i)\big).$$
Thus we see that
	\begin{enumerate}[(i)]
		\item $c_1(Z_i)^2 = -1$ for all $i$\,;
		\item $c_1(Z_i)\cdot c_1(Z_j) =  0$ for all $i\neq j$\,;
		\item $c_1(Z_i)\cdot (c_1(F_j) - c_1(F_k)) = (\rk F_k - \rk F_j) \big(K_X\cdot c_1(Z_i) +1 + 2c_2(Z_i)\big)$ for all $i$, $j$, and $k$.
	\end{enumerate}
	Therefore, if $(Z_0,Z_1,\ldots,Z_t, F_0,\ldots,F_{s})$ is of maximal length, with $\rk Z_i=0$ for $0\leq i \leq t$ and  $\rk F_j=1$ for $0\leq j \leq s$, then the subspace of $\operatorname{N}^1(S)$ spanned by $c_1(F_0),\ldots, c_1(F_s)$ has rank $s-1$.   Thus, denoting $D_j := c_1(F_j) - c_1(F_{j-1})$, we see that the matrix $\big(D_i\cdot D_j \big)_{1\leq i,j\leq s}$, which is trigonal by the proof of Proposition \ref{prop intersection matrix}, is degenerate. By Proposition \ref{P:unimodular}, $\big(D_i\cdot D_j \big)_{1\leq i,j\leq s-1}$ is unimodular. We conclude that the lattice $$\Z c_1(Z_0) \oplus \cdots \oplus \Z c_1(Z_t) \oplus \Z D_1 \oplus \cdots \oplus \Z D_{s-1}$$ is unimodular, and therefore that $\operatorname{N}^1(S)$ is unimodular.
	
	Finally, we show that $\rho = b_2 - 2b_1$ modulo $8$. Noether's formula $K_S^2  = 12\chi(O_S) - c_2(S)$ gives $K_S^2 = 12 - 2b_0 +2b_1 - b_2 = 10 + 2b_1 - b_2$. On the other hand, since the lattice $\operatorname{N}^1(S)$ is unimodular, van der Blij's lemma (see 	\cite[Lemma
	II.(5.2)]{hm}, or Remark \ref{rmk K2}), together with the Hodge index theorem, gives $K_S^2 = 2-\rho$ modulo 8. This concludes the proof.  
\end{proof}

\begin{rmk} At least in the case when the base field $k$ is algebraically
closed,
Sasha Kuznetsov has mentioned to me the following 
geometric argument, which is directly inspired from \cite[Theorem 4.7]{br}, showing that the existence of a numerically exceptional collection of maximal length implies the unimodularity of the N\'eron--Severi lattice.
Choose a basis of $\mathrm{N}^1(S)$ consisting of classes of smooth curves $C_i$
which
intersect pairwise transversally. Let $F_i$ be a theta-characteristic on $C_i$,
considered as a torsion sheaf on $S$ supported on $C_i$. Then it is clear that
$\dim \mathrm{Ext}^p(F_i,F_j) = [C_i]\cdot [C_j]$ for $p = 1$ and $0$ otherwise,
so
$\chi(F_i,F_j) = -[C_i]\cdot [C_j]$.
Moreover, clearly $\chi(O_S,F_i) = 0$ for all $i$, and if $P$ is a general point
(not lying on any of $C_i$) then $\chi(O_S,O_P) = \chi(O_P,O_S) = 1$,
$\chi(O_P,O_P) = 0$,
and $\chi(F_i,O_P) = \chi(O_P,F_i) = 0$. This shows that the bilinear form
$\chi$ expressed
in the basis $(O_S,O_P,F_1,\dots,F_n)$ is block upper-triangular with the first
of the diagonal blocks being a $2$-by-$2$ matrix $$\left(
\begin{array}{cc}
             * & 1 \\
              1 & 0  \end{array} \right)$$ and the second
diagonal block being the matrix $(-[C_i]\cdot [C_j])_{1\leq i,j \leq n}$. Thus
its
determinant is $(-1)^{n+1}\det([C_i]\cdot [C_j])$.
On the other hand, if there is a numerically exceptional collection then the
determinant is $1$,
so one can conclude that the determinant of the intersection form is
$(-1)^{n+1}$.
\end{rmk}

A straightforward corollary to Proposition \ref{P:unimodularity} is\,:

\begin{cor} \label{cor zerocycle} Let $S$ be a smooth projective surface with
$\chi(O_S)=1$. Assume that the surface $S$ admits a
numerically exceptional collection of maximal length. Then $S$ has a zero-cycle
of degree 1.\qed
\end{cor}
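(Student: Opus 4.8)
The plan is to read this off from the unimodularity of $\operatorname{N}^1(S)$ supplied by Proposition \ref{P:unimodularity}. The bridge between the two statements is the elementary remark that a zero-cycle of degree $1$ on $S$ can be manufactured as an intersection product of divisors: if $D_1, D_2 \in \CH^1(S)$, then $D_1 \cdot D_2 \in \CH^2(S) = \CH_0(S)$, and its degree equals the intersection number $\bar D_1 \cdot \bar D_2$ of the images $\bar D_i$ of $D_i$ in $\operatorname{N}^1(S)$. So the task reduces to finding $\bar D_1, \bar D_2 \in \operatorname{N}^1(S)$ with $\bar D_1 \cdot \bar D_2 = 1$, and then lifting them arbitrarily along the surjection $\CH^1(S) \twoheadrightarrow \operatorname{N}^1(S)$ built into the definition of $\operatorname{N}^1(S)$.

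Next I would observe that such a pair exists in any nonzero unimodular lattice. Let $g \geq 1$ be the positive generator of the ideal of $\Z$ generated by all intersection numbers $\bar D \cdot \bar D'$ for $\bar D, \bar D' \in \operatorname{N}^1(S)$. Fixing a $\Z$-basis $e_1, \dots, e_\rho$ of $\operatorname{N}^1(S)$, every entry of the Gram matrix $\big(e_i \cdot e_j\big)$ is divisible by $g$, so $\det\big(e_i \cdot e_j\big)$ is divisible by $g^\rho$; since $\operatorname{N}^1(S)$ is unimodular this determinant equals $\pm 1$, forcing $g = 1$. Hence there are classes $\bar D_1, \bar D_2 \in \operatorname{N}^1(S)$ with $\bar D_1 \cdot \bar D_2 = 1$, which combined with the first paragraph finishes the argument.

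I do not anticipate any genuine obstacle beyond Proposition \ref{P:unimodularity}; the only points needing a moment's care are that the degree of a product of two divisor classes depends only on their numerical classes — so the torsion-freeness built into $\operatorname{N}^1(S)$ loses nothing here — and that $\operatorname{N}^1(S)$ is nonzero, which holds for any projective surface since it contains the class of an ample divisor with positive self-intersection, so that $g$ is a well-defined positive integer.
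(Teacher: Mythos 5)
Your proof is correct and is essentially the argument the paper has in mind: the corollary is stated with \qed as an immediate consequence of Proposition \ref{P:unimodularity}, the point being precisely that unimodularity of $\operatorname{N}^1(S)$ forces the ideal of $\Z$ generated by intersection numbers to be all of $\Z$, and the intersection product of two divisor classes realizes any such number as the degree of a zero-cycle. Your gcd-divides-the-determinant argument and the care about nontriviality of $\operatorname{N}^1(S)$ are exactly the right details to supply.
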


For example, a smooth quadric surface $\subset \PP^3$ with no rational point
does not admit a numerically exceptional collection of maximal length, and hence
does not admit a full exceptional collection. (See Theorem \ref{thm geomrat surfaces} and Remark \ref{R:rational} for stronger statements.)\medskip

Note that the intersection pairing on $\mathrm{N}^1(S)$ for a smooth
projective complex surface $S$ with $p_g =0$ is always
unimodular by Poincar\'e duality and by the Lefschetz $(1,1)$-theorem. Thus
Proposition \ref{P:unimodularity}  provides an arithmetic obstruction
for
a surface defined over a non-algebraically closed field to admit a numerically exceptional collection  of
maximal length.

\subsection{Exceptional collections of maximal length and Chow motives}

\begin{lem}\label{lem torsionfree}
Let $X$ be a smooth projective variety. Assume that $K_0(X)$ is torsion-free.
Then $\CH^1(X)$ is torsion-free. If additionally $X$ is a smooth projective
surface, then the Chow ring $\CH^*(X)$ is torsion-free.
\end{lem}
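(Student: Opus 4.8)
The plan is to reduce the statement to the case of $\CH^1$ (which is $\Pic$) and the case of $0$-cycles on a surface, then exploit the fact that $K_0$ is filtered by codimension of support with graded pieces controlled by the Chow groups.

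First I would recall the topological filtration on $K_0(X)$: let $F^j K_0(X)$ be the subgroup generated by classes $[\calO_Z]$ of coherent sheaves supported in codimension $\geq j$. This is a finite filtration, and there are natural surjections $\CH^j(X) \twoheadrightarrow \operatorname{gr}^j_F K_0(X)$ sending a cycle $[Z]$ to the class of a structure sheaf of a representative (these are the maps appearing in the Grothendieck--Riemann--Roch story; the kernel is controlled by higher-codimension operations, but surjectivity is all I need in one direction). For $\CH^1$: the statement $\Pic X$ torsion-free. I would use instead the cleaner route via the determinant/first Chern class map $c_1 : K_0(X) \to \CH^1(X)$, which is a group homomorphism and is surjective because $c_1(\calO_X(D)) = [D]$ for every divisor $D$. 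Since $K_0(X)$ is torsion-free by hypothesis, its quotient... no — that argument gives nothing directly. The correct observation: $c_1$ restricted to $F^1 K_0(X)$ induces an \emph{isomorphism} onto $\CH^1(X)$ up to the subgroup $F^2$, more precisely $\operatorname{gr}^1_F K_0(X) \xrightarrow{\sim} \CH^1(X)$ via $c_1$. So $\CH^1(X)$ is a subquotient of $K_0(X)$, hence torsion-free. This is the first key step and it is essentially formal.

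For the surface case I then need $\CH^0(X) = \Z$ (trivially torsion-free, as $X$ is connected) and $\CH^2(X)$ torsion-free. Here $\CH^2(X) = \CH_0(X)$ is the group of $0$-cycles modulo rational equivalence. The filtration gives $\operatorname{gr}^2_F K_0(X) = F^2 K_0(X)$ (since $F^3 = 0$ on a surface) and a surjection $\CH^2(X) \twoheadrightarrow F^2 K_0(X)$, but for torsion-freeness I need this map to be \emph{injective} on torsion, i.e. I need that the only relation is the obvious one. The cleanest classical input is the Riemann--Roch theorem without denominators / the theorem of Srinivas and others, but the slickest self-contained argument: on a surface, for a $0$-cycle $\alpha$ of degree $0$, the class of $\calO_\alpha$ in $K_0$ (suitably interpreted via a resolution $0 \to \calO(-E) \to \calO \to \calO_\alpha \to 0$ type sequences) recovers $\alpha$ via $\operatorname{ch}_2$, and $\operatorname{ch}_2 : F^2 K_0(X) \to \CH^2(X)_\Q$ is, after noting $F^2 K_0(X)$ is already the degree-$2$ graded piece with no denominators needed on a surface (Grothendieck--Riemann--Roch for the inclusion of a point has no denominators in codimension $2$), an isomorphism $F^2 K_0(X) \xrightarrow{\sim} \CH^2(X)$. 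Hence $\CH^2(X) \cong F^2 K_0(X) \subset K_0(X)$ is torsion-free. Combined with $\CH^0 = \Z$ and the $\CH^1$ case, $\CH^*(X) = \CH^0 \oplus \CH^1 \oplus \CH^2$ is torsion-free as a group.

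I expect the main obstacle to be the $\CH^2$ (zero-cycles) step: making rigorous that the surjection $\CH^2(X) \twoheadrightarrow \operatorname{gr}^2_F K_0(X)$ is actually an isomorphism requires the integral (denominator-free) form of Riemann--Roch in the top codimension on a surface, or equivalently a direct argument that the obvious surjection has trivial kernel — one must verify that if $[\calO_p] - [\calO_q] = 0$ in $K_0(X)$ then $p$ is rationally equivalent to $q$, which is exactly the content of the Riemann--Roch theorem without denominators in this case. The $\CH^1$ part, by contrast, is soft: the first Chern class splits off $\operatorname{gr}^1$ integrally for any smooth variety, so no subtlety there. I would cite the denominator-free GRR (e.g. as in Fulton, or Srinivas) for the surface statement rather than reprove it.
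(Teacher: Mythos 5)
Your treatment of $\CH^2$ for a surface is correct and is essentially the paper's argument: one quotes the denominator-free Riemann--Roch statement (Fulton, Ex.~15.3.6) that $c_2 : F^2K_0(S) \to \CH^2(S)$ is an isomorphism, and since $F^2K_0(S)$ is a genuine \emph{subgroup} of the torsion-free group $K_0(S)$ (here $F^3=0$), it is torsion-free. The problem is your $\CH^1$ step, the one you declare ``soft'': the inference ``$\CH^1(X)\cong F^1K_0(X)/F^2K_0(X)$ is a subquotient of $K_0(X)$, hence torsion-free'' is a non-sequitur. A quotient of a torsion-free abelian group can perfectly well have torsion ($\Z\twoheadrightarrow \Z/r$), and $\operatorname{gr}^1_F K_0(X)$ is an honest quotient, not a subgroup; nor is there a natural group-theoretic splitting of $K_0(X)\twoheadrightarrow \Z\oplus\Pic(X)$ (the map $L\mapsto [L]-[O_X]$ fails to be additive precisely modulo $F^2$). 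So torsion-freeness of $K_0(X)$ gives you nothing about $\operatorname{gr}^1$ by pure formalism, and this is exactly where the content of the lemma lies.

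The paper (reproducing Galkin--Katzarkov--Mellit--Shinder) closes this gap with a multiplicative argument that your proposal is missing: if $L^{\otimes r}\cong O_X$, then $[L]-1$ lies in $F^1K_0(X)$, hence is nilpotent by multiplicativity of the topological filtration, say $([L]-1)^N=0$ with $N$ minimal; expanding $1=[L]^r=(1+([L]-1))^r=1+r([L]-1)+\alpha([L]-1)^2$ and multiplying by $([L]-1)^{N-2}$ exhibits $r([L]-1)^{N-1}=0$ with $([L]-1)^{N-1}\neq 0$, i.e.\ a non-zero torsion element of $K_0(X)$ \emph{itself}, unless $N=1$, in which case $[L]=1$ and $L\cong\det[L]\cong O_X$. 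The point is that one must manufacture torsion inside $K_0(X)$ rather than inside a graded piece, and the ring structure is what makes this possible. Your assessment of where the difficulty sits is inverted: the zero-cycle step is the one that reduces formally to ``subgroup of torsion-free'' (because $F^3=0$ on a surface), while the Picard step is the one that needs an idea.
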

\begin{proof} First, we note that  $\CH^0(X) = \Z[X]$ is torsion-free for all
varieties $X$.  
 
We show that if a smooth variety $X$ has torsion-free $K_0(X)$, then $\CH^1(X)$
is torsion-free. This was already proved in \cite[Lemma 2.2]{gkms}, and we
reproduce their proof for the sake of completeness. Recall that the first Chern class provides a group isomorphism $\mathrm{Pic}(X) \cong \CH^1(X)$, so that it is equivalent to show that $\mathrm{Pic}(X)$ is torsion-free. Assume that $L$ is a
line-bundle with $L^{\otimes r} = 0$ for some integer $r\geq 2$. The element
$[L]-1 \in K_0(X)$ has rank zero and thus belongs to $F^1K_0(X)$, where
$\mathrm{F}^\bullet$ denotes the topological filtration on $K$-groups. By
multiplicativity of the topological filtration, we find that $([L]-1)^{\dim X
+1} = 0 \in K_0(X)$. Let $N$ be the smallest integer such that $([L]-1)^N = 0$.
If $N=1$, then $[L]=1$ and  we get $L \cong O_X$.
If $N\geq 2$, we have $$1 = [L^{\otimes r}] = [L]^r = (1+([L]-1))^r = 1 +
r([L]-1) + \alpha([L]-1)^2 \ \in K_0(X),$$ for some $\alpha \in K_0(X)$.
Multiplying by $([L]-1)^{N-2}$ yields $$r([L]-1)^{N-1} = 0 \ \in K_0(X),$$ that
is,  $([L]-1)^{N-1}$ is a non-zero torsion element in $K_0(X)$.

It remains to see that if $S$ is a smooth projective surface such that $K_0(S)$
is torsion-free, then $\CH^2(S)$ is torsion-free. This follows immediately from
the fact \cite[Ex. 15.3.6]{fulton} that the second Chern class
    induces an isomorphism $c_2 : \mathrm{F}^2K_0(S)
\stackrel{\simeq}{\longrightarrow}
    \CH^2(S)$ (here, $\mathrm{F}^2K_0(S)$ is the subgroup
    of $K_0(S)$ spanned by coherent sheaves supported in codimension
    2). Since $K_0(S)$ is assumed to be torsion-free,
  we obtain that $\CH^2(S)$ is
    torsion-free.
\end{proof}

\begin{thm} \label{thm surface} Let $S$ be a smooth projective surface over a
field $k$, with $\chi(O_S)=1$. Assume that $S$ has a numerically exceptional collection of maximal length.
Then $\mathds{1} \oplus \mathds{1}(-1)^{\oplus \rho} \oplus \mathds{1}(-2)$ is a direct summand of the integral Chow motive of $S$. 
Moreover, if $S$ has a full exceptional collection, 
  then the integral Chow motive of $S$ is isomorphic to a sum of Lefschetz
  motives.
\end{thm}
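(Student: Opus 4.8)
The plan is to prove the two statements in turn; the second I would reduce to showing that a certain torsion motive vanishes, which is where the \emph{full} (as opposed to merely numerically) exceptional collection gets used. For the first statement, I would exhibit three mutually orthogonal idempotents in $\CH^2(S\times S)=\End(\h(S))$. By Corollary~\ref{cor zerocycle} the surface $S$ carries a zero-cycle $z$ of degree $1$, so $p_0:=z\times S$ and $p_\infty:=S\times z$ are orthogonal idempotents with $(S,p_0)\cong\mathds 1$ and $(S,p_\infty)\cong\mathds 1(-2)$, the splittings being $\mathds 1\xrightarrow{\,[S]\,}\h(S)\xrightarrow{\,z\,}\mathds 1$ and its Tate twist. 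By Proposition~\ref{P:unimodularity} the lattice $\operatorname{N}^1(S)$ is unimodular; fixing divisors $D_1,\dots,D_\rho\in\CH^1(S)$ whose classes form a basis of $\operatorname{N}^1(S)$, the Gram matrix $(D_i\cdot D_j)$ is invertible over $\Z$, and with $(n_{jk})$ its inverse I set $E_j:=\sum_k n_{jk}D_k$, so that $D_i\cdot E_j=\delta_{ij}$. Viewing $D_i$ as a morphism $a_i\colon\mathds 1(-1)\to\h(S)$ and $E_j$ as a morphism $b_j\colon\h(S)\to\mathds 1(-1)$ — both $\Hom$ groups being $\CH^1(S)$ — one gets $b_j\circ a_i=D_i\cdot E_j=\delta_{ij}$, whence $q:=\sum_i a_i\circ b_i$ is idempotent and $(S,q)\cong\mathds 1(-1)^{\oplus\rho}$. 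Every mixed composite among $p_0,q,p_\infty$ factors through a $\Hom$ group between two distinct Lefschetz motives, and $\Hom(\mathds 1(a),\mathds 1(b))=\CH^{b-a}(\Spec k)=0$ for $a\neq b$; so $\pi_M:=p_0+q+p_\infty$ is idempotent and $M:=(S,\pi_M)\cong\mathds 1\oplus\mathds 1(-1)^{\oplus\rho}\oplus\mathds 1(-2)$ is a direct summand of $\h(S)$.

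For the second statement, a full exceptional collection $\langle E_1,\dots,E_r\rangle$ is in particular numerically exceptional of maximal length, so the first part applies and I may write $\h(S)=M\oplus N$ with $N=(S,e')$, $e':=\Delta_S-\pi_M$. A full exceptional collection makes $K_0(S)$ free of finite rank, hence $\CH^\ast(S)$ is torsion-free by Lemma~\ref{lem torsionfree}; and, as recalled at the start of this section (flat base change, Kimura~\cite{kimura2}, \cite[Cor.~3.5]{vial}), $\h(S)\otimes\Q$ is a sum of Lefschetz motives. Combining these, the Picard variety of $S$ is trivial (its dimension is $b_1=0$), so $\CH^1(S)=\Pic(S)=\operatorname{N}^1(S)\cong\Z^{\rho}$ with $\rho=b_2$; the degree map $\CH^2(S)=\CH_0(S)\to\Z$ is an isomorphism (its kernel is torsion-free with zero rationalization); and $\CH^0(S)=\Z$. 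Thus $M$ and $\h(S)$ have the same Chow groups, so $\CH^\ast(N)=0$ and $N\otimes\Q=0$, and it remains only to prove $e'=0$.

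Here I would use that the products $E_i\boxtimes E_j$ form a full exceptional collection on $S\times S$, so the exterior product gives $K_0(S)\otimes_\Z K_0(S)\xrightarrow{\sim}K_0(S\times S)$ and in particular $[O_{\Delta_S}]=\sum_{i,j}n_{ij}\,[E_i]\boxtimes[E_j]$ for suitable integers $n_{ij}$. Taking the Chern character, passing to the codimension-$2$ component, and invoking $\mathrm{ch}_2(O_{\Delta_S})=[\Delta_S]$ (the leading term of Grothendieck--Riemann--Roch for the regular embedding $\Delta_S\hookrightarrow S\times S$), I would express $[\Delta_S]$ as a sum over $(i,j)$ of the bidegree $(0,2)$, $(1,1)$ and $(2,0)$ parts of $\mathrm{ch}(E_i)\boxtimes\mathrm{ch}(E_j)$. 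The $(1,1)$ parts are integral $\Z$-combinations of classes $D\boxtimes E$ with $D,E\in\operatorname{N}^1(S)$, each of which lies in $\End(M)$ by a direct check (using $D=\sum_i(D\cdot E_i)D_i$ and $D=\sum_i(D\cdot D_i)E_i$ for $D\in\operatorname{N}^1(S)$); the $(0,2)$ and $(2,0)$ parts are a priori half-integral multiples of $S\times z=p_\infty$ and $z\times S=p_0$, but since $(\pi_1)_\ast(S\times z)=[S]$, $(\pi_1)_\ast(z\times S)=0$ (and symmetrically for $\pi_2$) and $\CH^0(S)=\Z$, these two classes are not divisible in $\CH^2(S\times S)$, so the coefficients must be integers. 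Hence $\Delta_S\in\End(M)$. Since also $\Delta_S=\mathrm{id}_{\h(S)}=\pi_M+e'$ with $\pi_M\in\End(M)$ and $e'\in\End(N)$, and $\End(M)\cap\End(N)=0$ inside $\CH^2(S\times S)$ (if $x=\pi_M x\pi_M=e'xe'$ then $e'x=e'\pi_M x\pi_M=0$, so $x=e'xe'=0$), it follows that $e'=0$, $N=0$, and $\h(S)\cong\mathds 1\oplus\mathds 1(-1)^{\oplus\rho}\oplus\mathds 1(-2)$, a sum of Lefschetz motives.

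The main obstacle, as the above makes clear, is the last step: once one has $\h(S)=M\oplus N$ with $N$ a torsion motive all of whose Chow groups vanish, it is \emph{not} formal that $N=0$, since $\CH^2(S\times S)=\End(\h(S))$ may a priori carry torsion and Lemma~\ref{lem torsionfree} only controls $\CH^1$ in dimension $>2$. The full exceptional collection is exactly what lets one pin $\Delta_S$ down inside the ``K\"unneth'' subgroup $\End(M)\subseteq\CH^2(S\times S)$, via the splitting $K_0(S\times S)=K_0(S)\otimes_\Z K_0(S)$ together with the identity $\mathrm{ch}_2(O_{\Delta_S})=[\Delta_S]$; I also expect the bookkeeping that makes the $(0,2)$- and $(2,0)$-coefficients come out integral (rather than half-integral) to require some care.
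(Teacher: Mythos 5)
Your first half (the construction of the three mutually orthogonal idempotents $p_0=z\times S$, $p_\infty=S\times z$ and $q=\sum_i a_i\circ b_i$, resting on Corollary~\ref{cor zerocycle} and the unimodularity from Proposition~\ref{P:unimodularity}) is correct and is essentially identical to the paper's construction of $\pi^0$, $\pi^4$ and $p_i=D_i^\vee\times D_i$. Likewise your computation of the integral Chow groups in the full exceptional case ($\CH^0(S)=\Z$, $\CH^1(S)=\bigoplus\Z D_i$, $\CH^2(S)=\Z z$) matches the paper's.

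The final step, however, has a genuine gap, and it sits exactly at the point you yourself flag as the main obstacle. The identity $[O_{\Delta_S}]=\sum_{i,j}n_{ij}[E_i]\boxtimes[E_j]$ in $K_0(S\times S)$ is fine, but the Chern character takes values in $\CH^*(S\times S)\otimes\Q$, so applying $\mathrm{ch}_2$ and invoking $\mathrm{ch}_2(O_{\Delta_S})=[\Delta_S]$ only yields an equality in $\CH^2(S\times S)\otimes\Q$. Even after your (correct) argument that the $(0,2)$- and $(2,0)$-coefficients are integers, you have only shown that $[\Delta_S]$ agrees with an element of $\End(M)$ \emph{modulo torsion in} $\CH^2(S\times S)$ --- and it is precisely the possible torsion of $\CH^2(S\times S)$ (a codimension-$2$ Chow group of a fourfold, which Lemma~\ref{lem torsionfree} does not control, and which is not torsion-free merely because $K_0(S\times S)$ is free) that is the whole difficulty. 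What you actually obtain is that $e'$ is a \emph{torsion} idempotent; the relation $e'=e'\circ e'$ together with $ne'=0$ does not force $e'=0$ (compare $1\in\Z/n$). So the conclusion $\Delta_S\in\End(M)$, and hence $e'=0$, is not justified. The paper closes this gap differently: by Kuznetsov's base-change theorem the collection remains full exceptional over every field extension $K/k$, so the same Chow-group computations show that $\Gamma=\Delta_S-\pi^0-\pi^4-\sum_i p_i$ acts as zero on $\CH^*(S_K)$ for \emph{all} $K/k$; by the nilpotence theorem for such correspondences (\cite[Proposition 3.2]{sv}, a Bloch--Srinivas-type argument) $\Gamma$ is then nilpotent, and a nilpotent idempotent vanishes. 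Your argument would be repaired by substituting this base-change-plus-nilpotence step for the K\"unneth/Chern-character step; as written, the latter cannot detect the torsion class $e'$.
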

\begin{proof} First assume that $S$ has a numerically exceptional collection $(E_0,\ldots,E_{n+1})$ of maximal length.
	By Proposition \ref{P:unimodularity},  there exists
	a $n$-tuple $(D_1,\ldots, D_n)$ of elements of $\CH^1(S)$, such that the matrix
	$M = (D_i\cdot D_j)_{1 \leq i,j \leq n}$ 
	is unimodular. (In particular, $\Z D_1 \oplus \cdots \oplus \Z D_n$ is a sub-group of $\CH^1(S)$.)
 Let
	then $(D_1^\vee,\ldots, D_n^\vee)$ be the basis of $\Z D_1 \oplus \cdots \oplus \Z D_n$ that is dual to
	the basis $(D_1,\ldots, D_n)$ with
	respect to the intersection pairing and let,  by Corollary \ref{cor zerocycle}, 
	$a \in \CH_0(S)$ be a zero-cycle of
	degree $1$ on $S$. We define the
	correspondences $\pi^0 := a\times S$, $\pi^4 := S \times a$,
	and $$p_i := D_i^\vee \times D_i, \quad \text{for all } 1\leq i
	\leq n.$$ These define mutually orthogonal idempotents in the
	correspondence ring $\CH^2(S\times S)$.  For instance $p_i \circ p_j
	= (D_j \cdot D_i^\vee)\, D_j^\vee \times D_i = \delta_{i,j}\, D_j^\vee \times
	D_i$, where
	$\delta_{i,j} = 0$ if $i\neq j$ and $\delta_{i,i}=1$. It is also clear that
	$\pi^0 \circ \pi^0 = \pi^0$, $\pi^4 \circ \pi^4 = \pi^4$, and that $\pi^0 \circ
	\pi^4 = \pi^4 \circ \pi^0 = \pi^4 \circ p_i = p_i \circ \pi^4 = \pi^0 \circ p_i
	= p_i \circ \pi^0 =0$. Moreover, we
	have $(S,\pi^0) \cong \mathds{1}$, $(S,\pi^4) \cong
	\mathds{1}(-2)$, and $(S,p_i) \cong \mathds{1}(-1)$ for all $i$, and this proves that $\mathds{1} \oplus \mathds{1}(-1)^{\oplus \rho} \oplus \mathds{1}(-2)$ is a direct summand of the integral Chow motive of $S$. 
	
	Assume now that the collection $(E_0,\ldots,E_{n+1})$ is  full exceptional.
	 It is enough to show that the idempotent correspondence $$\Gamma := \Delta_S
	- \pi^0 - \pi^4 - \sum_i p_i  \in\CH^2(S \times
	S)$$ is equal to $0$.
	The key is to show that $\Gamma$ acts as zero on the integral Chow groups
	$\CH^i(S)$ for $i=0,1$ and $2$.  
 For this, it is enough to show that $\operatorname{CH}^1(S) = 	\Z D_1 \oplus \cdots \oplus \Z D_n$ and that $\CH^2(S)=\Z a$.
	First recall that, in general, if $X$ is a smooth projective variety with a
	full exceptional
	collection that consists of $N$ exceptional objects, then $K_0(X)$ is free of rank $N$.
	The Chern character induces an isomorphism $K_0(S)\otimes_\Z \Q
	\stackrel{\simeq}{\longrightarrow} \CH^*(S) \otimes_\Z \Q$. Since $\rk \CH^0(S)
	= 1$ and since $\rk \CH^2(S) \geq 1$, we get that $\rk \CH^1(S) \leq n$.  
Since the intersection pairing restricted to the subspace of $\CH^1(S)$ spanned by $D_1,\ldots, D_n$ is unimodular, and since $\CH^1(S)$ is torsion-free by Lemma \ref{lem torsionfree}, we find that $(D_1,\ldots, D_n)$ forms a $\Z$-basis of $\CH^1(S)$.
	It follows that $\CH^2(S)$ has rank 1. Since $\CH^2(S)$ is also
	torsion-free by Lemma  \ref{lem torsionfree}, we find that $\CH^2(S)$
	is spanned by any zero-cycle of minimal positive degree and thus that
	$\CH^2(S)=\Z a$.

	Finally, by flat base-change \cite{kuznetsov}, the sequence $\langle
	(E_0)_K,(E_1)_K,\ldots, (E_{n+1})_K\rangle$ is a full exceptional
	collection for $S_K = S\times_{\mathrm{Spec}\, k}
	\mathrm{Spec}\, K$ for all field extensions $K/k$. Here $(E_i)_K$ is the
	pull-back of the object $E_i$ along the projection $S_K \rightarrow S$.
	Hence,
	the arguments above show that $(\Gamma_K)_*\CH^*(S_K) = 0$ for all
	field extensions $K/k$. Therefore the
	correspondence $\Gamma$ is nilpotent\,; see e.g. \cite[Proposition 3.2]{sv}.
	Because $\Gamma$ is an
	idempotent, we conclude that $\Gamma =0$.
\end{proof}

\begin{rmk}
One may in fact prove the following generalization of Theorem \ref{thm
surface}\,: If $S$ is a surface such that the base-change $K_0(S) \rightarrow
K_0(S_K)$ is surjective for all field extensions $K/k$, then the integral Chow
motive of $S$ is isomorphic to a direct sum of Lefschetz motives. (This is indeed a
generalization of Theorem \ref{thm surface} because of the base-change theorem
\cite{kuznetsov} for full exceptional collections.) For that matter, one uses a
recent result of Totaro \cite[Theorem 4.1]{totaro}, combined with the
integral version of the Riemann--Roch formula as used in the proof of Lemma
\ref{lem
torsionfree}.
\end{rmk}

\begin{rmk} \label{rmk barlow}
It seems that the converse to Theorem \ref{thm surface} is not true, that is,
for a surface to have a full exceptional collection seems more restrictive than
its integral Chow motive being isomorphic to a direct sum of Lefschetz motive.
Consider for instance a complex Barlow surface $S$. On the one hand, it is
proved in
\cite[Proposition 1.9]{actp} and \cite[Corollary 2.2]{voisin} that the Chow
group of zero-cycles of $S$ is universally trivial, in the sense that
$\CH^2(S_K)=\Z$
for all field extensions $K/\C$. By \cite[Theorem 4.1]{totaro},
 it follows
that the integral Chow motive of $S$ is a direct sum of Lefschetz motives.  On the
other hand, B\"ohning, Graf von Bothmer,
Katzarkov, and Sosna \cite{bgks} have exhibited a complex Barlow surface $S$ (a
determinantal Barlow surface) with an exceptional collection whose orthogonal
complement is a \emph{phantom} category, that is, a
non-trivial strictly full triangulated category with vanishing $K_0$.
Of course this does not say that the Barlow surface $S$ does not admit a full
exceptional collection, but it looks like a possibility that it won't. Finally,
as yet another reason why having a full exceptional collection is stronger than
having an integral motive isomorphic to a direct sum of Lefschetz motives, it
is believed and conjectured that a  surface that admits a full
exceptional collection must be rational.
\end{rmk}

\section{Numerically exceptional collections of maximal length on surfaces}

In the previous section, we saw  that the existence of a full
exceptional collection for a surface $S$ gives serious
constraints  on the integral motive of $S$. 
In this section, we show that, for a surface $S$,  the weaker condition of having a
numerically
exceptional collection  of maximal length, consisting of rank one objects,  is
still very restrictive.  The main result, which  builds up on Proposition \ref{prop
unimodular}, is Theorem \ref{thm criterion}\,:  we give a necessary and
sufficient
condition for a
smooth
projective surface $S$ defined over a field $k$, with $\chi(O_S)=1$, to
admit a numerically exceptional collection  of maximal length, consisting of line-bundles. Although its
proof  consists mostly of elementary linear algebra and lattice theory, Theorem
\ref{thm criterion} has surprising consequences. On an arithmetic perspective,
Theorem \ref{thm cycle} roughly says that a ``non-split'' surface over a field
$k$  (e.g. a surface that is not rational over $k$ but that becomes rational
after some field extension\,; see Theorem \ref{thm rational}) does not admit a
numerically exceptional collection
of maximal length, consisting of line-bundles. On a geometric perspective, Theorem \ref{thm surface general
type}  determines exactly which complex surfaces
with $p_g=q=0$ admit a numerically exceptional collection of maximal length.

\subsection{Main theorem}
Before we proceed to the statement of Theorem \ref{thm criterion}, let us recall
some facts about lattices for which we refer to \cite{hm}. A lattice $\Lambda$
is a free $\Z$-module of finite rank equipped with a symmetric bilinear form
$b : \Lambda \times \Lambda \rightarrow \Z$. A  lattice is said to be \emph{even} if
the norm of every vector is even\,; it is said to be \emph{odd} otherwise. A
lattice is said to be \emph{unimodular} if the determinant of its bilinear form
(expressed in any $\Z$-basis) is equal to $\pm 1$. An odd unimodular lattice of
signature $(1,N)$ is always isomorphic to the lattice $\langle -1
\rangle^{\oplus N} \oplus \langle 1 \rangle$. Here, $\langle \pm 1\rangle$
denotes the lattice of rank $1$ with generator of norm equal to $\pm 1$, and the
direct sum is understood as being orthogonal. 
An even unimodular lattice of
signature $(1,N)$ exists only when $N-1$ is divisible by $8$, in which case it
is isomorphic to $U\oplus E_8(-1)^{\oplus \frac{N-1}{8}}$. Here, $U$ is the
hyperbolic plane and $E_8(-1)$ is the opposite of the $E_8$-lattice.

\begin{thm} \label{thm criterion} Let $S$ be a smooth projective surface over a
field $k$, with $\chi(O_S)=1$. The
following statements are equivalent\,:
\begin{enumerate}[(i)]
\item $S$ admits a numerically
  exceptional collection $(L_0,L_1,\ldots, L_{n+1})$ of line-bundles which is
  of \emph{maximal length}, that is, $n = \rk \mathrm{N}^1(S)$.
\item  $S$ admits a numerically
  exceptional collection $(E_0,E_1,\ldots, E_{n+1})$ of numerical line-bundles in $D^b(S)$
which is
  of \emph{maximal length}, that is, $n = \rk \mathrm{N}^1(S)$.
\item We have $(K_S)^2=10-  \rk \mathrm{N}^1(S)$, and the lattice
$\mathrm{N}^1(S)$ 
  and the canonical divisor $K_S$, when seen as an element of
$\mathrm{N}^1(S)$, satisfy one of the following properties\,:
  \begin{itemize}
\item $\mathrm{N}^1(S)\cong \langle 1 \rangle$ and $K_S = 3D$ for some
primitive divisor $D$\,;
\item   $\mathrm{N}^1(S) \cong U$ and $K_S = 2D$ for some
primitive divisor $D$\,; 
\item  $\mathrm{N}^1(S) \cong \langle 1 \rangle \oplus \langle -1
\rangle^{\oplus n}$ with $n>0$ and $K_S$ is
a primitive divisor.
\end{itemize}
  \end{enumerate}
\end{thm}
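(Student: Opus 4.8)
The plan is to reduce the statement to linear algebra over $\mathrm{N}^1(S)$, which is the content of the appendix. The equivalence (i) $\Leftrightarrow$ (ii) is immediate: it is the equivalence of (i) and (ii) in Proposition~\ref{prop intersection matrix}, applied with $r=\rk\mathrm{N}^1(S)$, once one observes that a numerically exceptional collection of rank one objects has length at most $\rk K^{\mathrm{num}}_0(S)=2+\rk\mathrm{N}^1(S)$, so that ``maximal length'' means precisely ``length $\rk\mathrm{N}^1(S)+2$''. By the equivalence (ii) $\Leftrightarrow$ (iii) of Proposition~\ref{prop intersection matrix}, conditions (i) and (ii) of Theorem~\ref{thm criterion} are in turn equivalent to the existence of classes $D_1,\dots,D_{\rho+1}\in\mathrm{N}^1(S)$, where $\rho:=\rk\mathrm{N}^1(S)$, satisfying the Riemann--Roch relation $K_S\cdot D_i=-2-D_i^2$ for all $i$ and whose Gram matrix $(D_i\cdot D_j)$ is trigonal in the sense of that proposition. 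Since $\mathrm{N}^1(S)$ has rank $\rho$, these $\rho+1$ classes are numerically dependent, so this $(\rho+1)\times(\rho+1)$ trigonal Gram matrix is necessarily degenerate.

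For the implication to (iii), one extracts the structure of such a configuration. By Proposition~\ref{P:unimodular} the leading $\rho\times\rho$ block of a degenerate trigonal matrix is unimodular; hence $D_1,\dots,D_\rho$ span a full-rank unimodular sublattice of $\mathrm{N}^1(S)$, which forces $\mathrm{N}^1(S)$ itself to be unimodular (an inclusion of unimodular lattices of equal rank is an equality; alternatively, quote Proposition~\ref{P:unimodularity}) and $(D_1,\dots,D_\rho)$ to be a $\Z$-basis of $\mathrm{N}^1(S)$. By the Hodge index theorem $\mathrm{N}^1(S)$ has signature $(1,\rho-1)$, so it is the odd lattice $\langle 1\rangle\oplus\langle -1\rangle^{\oplus(\rho-1)}$ unless $\rho\equiv 2\pmod 8$, in which case it could a priori also be an even lattice $U\oplus E_8(-1)^{\oplus(\rho-2)/8}$. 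The relations $K_S\cdot D_i=-2-D_i^2$ say exactly that $K_S$ is a characteristic vector of $\mathrm{N}^1(S)$. One then feeds the configuration, the characteristic vector, the degeneracy and the signature into the appendix classification (Proposition~\ref{prop unimodular}): this is where one proves that $K_S^2=10-\rho$ holds on the nose — not merely modulo $8$ as van der Blij's lemma would give — that the even case occurs only for $\rho=2$ (so that $U\oplus E_8(-1)^{\oplus m}$ with $m\ge 1$ is excluded), and that $K_S$ has the asserted divisibility ($3D$, $2D$, or primitive).

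For the converse (iii) $\Rightarrow$ (i), one runs Proposition~\ref{prop unimodular} backwards: in each of its three cases one writes down an explicit degenerate trigonal configuration $D_1,\dots,D_{\rho+1}$ with $K_S\cdot D_i=-2-D_i^2$ — modelled respectively on the Beilinson collection of $\PP^2$ (with $\mathrm{N}^1(S)=\langle 1\rangle=\Z H$ and $D_1=D_2=\pm H$), on the standard collection of $\PP^1\times\PP^1$ (with $\mathrm{N}^1(S)=U$), and on a full exceptional collection of line bundles on the blow-up of $\PP^2$ at $\rho-1$ points (with $\mathrm{N}^1(S)=\langle 1\rangle\oplus\langle -1\rangle^{\oplus(\rho-1)}$), checking directly that $K_S^2=10-\rho$ in each model. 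Proposition~\ref{prop intersection matrix}, implication (iii) $\Rightarrow$ (i), then upgrades the divisors to a genuine numerically exceptional collection of line bundles of length $\rho+2$, which is of maximal length; this gives (i), and hence also (ii).

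The hard part is the appendix classification itself, i.e.\ Proposition~\ref{prop unimodular}. Unimodularity is elementary: it follows from the recursion $d_k=a_k d_{k-1}-d_{k-2}$ for the determinants $d_k$ of the leading $k\times k$ blocks of the trigonal matrix, which shows $\gcd(d_k,d_{k-1})=\gcd(a_1,1)=1$ throughout, so that a vanishing $d_{\rho+1}$ forces $d_\rho=\pm 1$. What requires genuine work is (a) pinning down $K_S^2=10-\rho$ \emph{exactly} (a parity/van der Blij argument only yields it modulo $8$), and (b) showing that among unimodular lattices of signature $(1,\rho-1)$ only the three listed ones, with the stated $K_S$, support a degenerate trigonal configuration compatible with a characteristic vector — in particular ruling out $U\oplus E_8(-1)^{\oplus m}$ for $m\ge 1$, where parity considerations alone do not suffice. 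I expect this to be carried out by an inductive mutation/reflection argument on the length $\rho+1$ of the chain $D_1,\dots,D_{\rho+1}$, peeling off one end at a time while keeping track of both the intersection form and the characteristic vector.
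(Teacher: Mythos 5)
Your overall architecture coincides with the paper's: reduce to lattice theory via Riemann--Roch (Proposition \ref{prop intersection matrix}), get unimodularity of the leading block of the degenerate trigonal Gram matrix from the determinant recursion $d_m = a_m d_{m-1}-d_{m-2}$ (Proposition \ref{P:unimodular}), use the Hodge index theorem for the signature, observe that the relations $K_S\cdot D_i = -2-D_i^2$ make $K_S$ a special characteristic element, and then classify. Your sketch of the forward direction $(ii)\Rightarrow(iii)$ --- including the two genuinely hard points, namely the exact equality $K_S^2=10-\rho$ and the exclusion of $U\oplus E_8(-1)^{\oplus m}$ for $m\geq 1$ --- and your proposed ``peel off one end of the chain by mutations/reflections'' induction match the paper's Observations \ref{obs}, Lemma \ref{L:0-1}, Lemma \ref{lem even2} and Proposition \ref{P:i-ii} quite closely (your citation of Proposition \ref{prop unimodular} as ``the appendix classification'' should be Theorem \ref{T:mainappendix}, but that is only a labelling slip).

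The gap is in the converse $(iii)\Rightarrow(i)$. Condition $(iii)$ only specifies the isometry class of the lattice $\mathrm{N}^1(S)$ together with abstract properties of $K_S$ (its norm, its divisibility, and --- via Wu's formula --- the fact that it is characteristic); it does not hand you a basis in which $K_S$ has the shape it has for $\PP^2$, $\PP^1\times\PP^1$, or a blow-up of $\PP^2$. Writing down a configuration ``modelled on'' those surfaces therefore presupposes that the pair $(\mathrm{N}^1(S),K_S)$ is isometric to the model pair, i.e., that there is a single $O(q)$-orbit of primitive characteristic vectors of norm $10-n$ in $\langle 1\rangle\oplus\langle -1\rangle^{\oplus (n-1)}$. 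This Witt-type uniqueness statement is exactly Proposition \ref{P:ii-iii} of the paper, and it is a substantial separate argument: a minimization by reflections such as $R_{e_1+e_2+e_3+e_4}$, a Diophantine analysis of $x_1^2=x_2^2+x_3^2+7x_4^2$ in the borderline case $n=10$, and an appeal to Nikulin's theorem for $n>10$. Your ``peeling off the chain'' induction cannot supply this, since in the converse direction there is no chain yet to peel; the rank-one and hyperbolic cases are indeed trivial to normalize, but the odd case of rank $\geq 2$ is where the real work of the converse lies, and your proposal contains no mechanism for it.
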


A first step towards proving the theorem consists in characterizing the N\'eron--Severi lattice, together with the way the canonical divisor sits in it, of surfaces that admit a numerically exceptional collection of maximal length\,:

\begin{prop} \label{prop unimodular} Let $S$ be a smooth projective surface with $\chi(O_S)=1$. The following statements are equivalent.
	\begin{enumerate}[(i)]
		\item $S$ admits a numerically
		exceptional collection $(E_0,E_1,\ldots, E_{n+1})$ of numerical line-bundles, which is
		of \emph{maximal length}, that is, $n = \rk \mathrm{N}^1(S)$.
		\item The N\'eron--Severi lattice $\mathrm{N}^1(S)$ is trigonal and unimodular, and $K_S$ is a \emph{special} characteristic element in the sense of Definition \ref{D:characteristic}.
	\end{enumerate}	
\end{prop}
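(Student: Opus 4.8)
The plan is to deduce the equivalence from the dictionary established in Proposition~\ref{prop intersection matrix}, and then to feed its output into the linear algebra of trigonal lattices treated in the appendix. Since $K_S\cdot D$ and $D\cdot D'$ depend only on numerical classes and since $\CH^1(S)\twoheadrightarrow\mathrm{N}^1(S)$ is surjective, I would work throughout in the lattice $\mathrm{N}^1(S)$.

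For $(i)\Rightarrow(ii)$, I would start from a numerically exceptional collection $(E_0,\dots,E_{n+1})$ of numerical line-bundles with $n=\rk\mathrm{N}^1(S)$ and apply Proposition~\ref{prop intersection matrix} to obtain divisors $D_1,\dots,D_{n+1}$ with $K_S\cdot D_i=-2-D_i^2$ for every $i$ and with trigonal intersection matrix $M=(D_i\cdot D_j)_{1\le i,j\le n+1}$. Next I would exploit maximality: a numerically exceptional collection has unipotent upper-triangular Euler-pairing Gram matrix, so its classes form a $\Z$-basis of $K_0^{\mathrm{num}}(S)$, and passing to the associated graded of the topological filtration shows that the differences $c_1(E_i)-c_1(E_0)=D_1+\cdots+D_i$, and hence $D_1,\dots,D_{n+1}$, span $\mathrm{N}^1(S)\otimes\Q$. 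As $\mathrm{N}^1(S)$ has rank $n$ and its intersection form is nondegenerate, the $(n+1)\times(n+1)$ matrix $M$ is degenerate (in fact of corank one), so Proposition~\ref{P:unimodular} forces its leading block $(D_i\cdot D_j)_{1\le i,j\le n}$ to be unimodular. Then $D_1,\dots,D_n$ are independent and span a full-rank sublattice of $\mathrm{N}^1(S)$ with unimodular Gram matrix, so comparing discriminants yields $\mathrm{N}^1(S)=\Z D_1\oplus\cdots\oplus\Z D_n$, whose Gram matrix in this basis is trigonal and unimodular. Finally, the relations $K_S\cdot D_i\equiv D_i^2\pmod 2$ on this basis show, by linearity mod $2$, that $K_S$ is a characteristic element, and the remaining relation $K_S\cdot D_{n+1}=-2-D_{n+1}^2$ --- where $D_{n+1}$, being forced by the trigonal pattern to satisfy $D_i\cdot D_{n+1}=\delta_{i,n}$, is the last vector of the dual basis --- is precisely the clause of Definition~\ref{D:characteristic} that makes $K_S$ a \emph{special} characteristic element.

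For $(ii)\Rightarrow(i)$, I would run this in reverse: given that $\mathrm{N}^1(S)$ is trigonal and unimodular and that $K_S$ is a special characteristic element, Definition~\ref{D:characteristic} provides a trigonal basis $D_1,\dots,D_n$ of $\mathrm{N}^1(S)$ with $K_S\cdot D_i=-2-D_i^2$ for $i\le n$, and the last vector $D_{n+1}$ of the corresponding dual basis satisfies $D_i\cdot D_{n+1}=\delta_{i,n}$ for $i\le n$ together with $K_S\cdot D_{n+1}=-2-D_{n+1}^2$; lifting $(D_1,\dots,D_{n+1})$ arbitrarily to $\CH^1(S)$, this tuple meets condition~(iii) of Proposition~\ref{prop intersection matrix}, which then produces a numerically exceptional collection $(E_0,\dots,E_{n+1})$ of line-bundles --- \emph{a fortiori} of numerical line-bundles --- that has maximal length, since $D_1,\dots,D_n$ already span $\mathrm{N}^1(S)$ and thus $n=\rk\mathrm{N}^1(S)$.

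I expect the only substantive ingredient beyond Proposition~\ref{prop intersection matrix} to be the appendix fact (Proposition~\ref{P:unimodular}) that a degenerate trigonal integral symmetric matrix has unimodular leading principal block, together with the bookkeeping built into Definition~\ref{D:characteristic} that packages the final Riemann--Roch relation; inside the argument the one point needing care is the translation of ``maximal length'' into ``$D_1,\dots,D_{n+1}$ span $\mathrm{N}^1(S)\otimes\Q$'', which I would read off from the unipotence of the Euler-pairing matrix of a numerically exceptional collection.
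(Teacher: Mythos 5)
Your overall route is the same as the paper's: Proposition \ref{prop intersection matrix} translates condition (i) into the existence of divisors $D_1,\dots,D_{n+1}$ with trigonal intersection matrix satisfying $K_S\cdot D_i=-2-D_i^2$, and Proposition \ref{P:unimodular} converts that lattice-theoretic condition into (ii). Your direction $(i)\Rightarrow(ii)$ is correct, though the detour through the topological filtration to show that the $D_i$ span $\mathrm{N}^1(S)\otimes\Q$ is not needed: $n+1$ vectors in a rank-$n$ nondegenerate lattice already force the $(n+1)\times(n+1)$ Gram matrix to be degenerate, and once the leading block is unimodular the discriminant comparison you invoke identifies $\Z D_1\oplus\cdots\oplus\Z D_n$ with $\mathrm{N}^1(S)$.

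There is, however, a genuine gap in $(ii)\Rightarrow(i)$. You assert that the dual basis vector $D_{n+1}=D_n^\vee$ satisfies $K_S\cdot D_{n+1}=-2-D_{n+1}^2$, attributing this to ``the bookkeeping built into Definition \ref{D:characteristic}''. But Definition \ref{D:characteristic} only imposes $K_S\cdot e_i=-e_i^2-2$ for the $n$ vectors of a trigonal basis; it says nothing about $D_n^\vee$, which is uniquely determined by the trigonal pattern (by unimodularity, the conditions $D_i\cdot D_{n+1}=\delta_{i,n}$ pin it down) and therefore cannot be adjusted. The missing relation is a nontrivial consequence that must be proved; this is precisely the ``Moreover'' clause of Proposition \ref{P:unimodular}, whose proof identifies $K_S+D_1+\cdots+D_n$ with $-(D_1^\vee+D_n^\vee)$ and then computes $D_1^\vee\cdot D_n^\vee=(-1)^{n^+-1}$ from a minor of the trigonal Gram matrix. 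That sign equals $+1$, and hence yields $K_S\cdot D_n^\vee=-2-(D_n^\vee)^2$, only because $\mathrm{N}^1(S)$ has signature $(1,n-1)$ by the Hodge index theorem; for a negative definite trigonal unimodular lattice the relation would fail. Since your write-up invokes neither the signature nor any computation at this point, the implication $(ii)\Rightarrow(i)$ is not established as written.
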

\begin{proof} By Proposition \ref{P:unimodular}, $(ii)$ is equivalent to the existence of $n+1$ divisors $D_1,\ldots,D_{n+1}$ in $\mathrm{N}^1(S)$ such that the matrix $(D_i\cdot D_j)_{1\leq i,j \leq n+1}$ is trigonal and such that $K_S\cdot D_i = -2-(D_i)^2$ for all $i$. This in turn is equivalent, by Proposition \ref{prop intersection matrix},  to the existence of a numerically exceptional collection of maximal length consisting of numerical line-bundles. 
\end{proof}

\begin{proof}[Proof of Theorem \ref{thm criterion}]
  $(i)\Rightarrow (ii)$\,: This is obvious.\medskip

 $(ii) \Rightarrow (iii)$\,: By Proposition \ref{prop unimodular}, the N\'eron--Severi lattice $\mathrm{N}^1(S)$ is trigonal and unimodular, and $K_S$ is a \emph{special} characteristic element in the sense of Definition \ref{D:characteristic}, \emph{i.e.}, $K_S\cdot c_1(E_i) = -c_1(E_i)^2 - 2$ for all $0\leq i \leq n+1$. We can conclude by invoking Theorem \ref{T:mainappendix} (and specifically the implication $(i) \Rightarrow(ii)$ therein).\medskip
    
$(iii) \Rightarrow (i)$\,:  This is the conjunction of Proposition \ref{prop unimodular}, and of the implication $(ii) \Rightarrow (i)$ of Theorem \ref{T:mainappendix}.
In fact, thanks to item $(iii)$ of Theorem \ref{T:mainappendix}, we can find an orthogonal basis of $\mathrm{N}^1(S)$ in which $K_S$ has a nice expression.
Assume that  $\mathrm{N}^1(S)$ is an odd
unimodular lattice of rank $n$. Since it has signature $(1,n-1)$ and since
$(K_S)^2=10-n$ by assumption, Theorem \ref{T:mainappendix} implies that there exists a $\Z$-basis  $(D_1, \ldots,
D_n)$ such
that $K_S = D_1 + D_2 + \cdots + D_{n-1}  - 3D_n$, $D_i\cdot D_j = 0$ for $i\neq
j$, $(D_i)^2 = -1$ for $i \leq n-1$ and $(D_{n})^2=1$. Let us then define
$D_{n+1} = 2D_n$. Then we easily check that 
the
collection $(O_S,O_S(D_1),\ldots,O_S(D_{n+1}))$ is numerically exceptional.
        Likewise, if  $\mathrm{N}^1(S)$ is isomorphic to the hyperbolic plane
and if $K_S$ is twice a primitive divisor, then, because $(K_S)^2=8$ by
assumption, Theorem \ref{T:mainappendix} gives
a $\Z$-basis $(D_1,D_2)$ of $\mathrm{N}^1(S)$ such that $(D_1)^2 =
(D_2)^2 = 0$, $D_1\cdot D_2 = 1$ and $K_S = -2D_1-2D_2$. We then define $D_3 :=
D_1+D_2$.
 Again it is straightforward to check that  
the collection
$(O_S,O_S(D_1),O_S(D_2),O_S(D_1+D_2))$ is numerically exceptional.
\end{proof}

\subsection{Consequence for the cycle class map} Let $k$ be a field and denote 
$\bar{k}$ a separable
closure. Given a field extension $K/k$ and a scheme $X$ over $k$,  we write $X_{K} := X\times_{\mathrm{Spec}\,
	k}{\mathrm{Spec} \,K}$.

Theorem \ref{thm criterion} gives constraints of
arithmetic nature for the existence of numerically exceptional collections, consisting of numerical line-bundles, of
maximal length\,:

\begin{thm} \label{thm cycle}
Let $S$ be a smooth projective surface over a
field $k$, with $\chi(O_S)=1$ and with first Betti number $b_1 = 0$. 
Assume that
$S$ admits a numerically
  exceptional collection $(E_0,E_1,\ldots, E_{n+1})$ of maximal length, consisting of numerical line-bundles. Then, for all primes $\ell$  not dividing $\mathrm{char} \, k$, the cycle
class
map $$\CH^1(S) \otimes \Z_\ell \rightarrow
\HH_{et}^2(S_{\bar{k}},\Z_\ell(1))$$ is
surjective modulo torsion, that is, it induces a surjective map $$\CH^1(S)
\otimes \Z_\ell \twoheadrightarrow
\HH_{et}^2(S_{\bar{k}},\Z_\ell(1))/torsion.$$
In particular,  the
 collection $((E_0)_K,(E_1)_K,\ldots, (E_{n+1})_K)$ for $S_K$ is numerically
    exceptional of maximal
length, and the base-change $$\mathrm{N}^1(S) \stackrel{\simeq}{\longrightarrow}
\mathrm{N}^1(S_K)$$ is an isometry for all field extensions $K/k$.
\end{thm}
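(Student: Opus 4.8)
The plan is to first pin down the N\'eron--Severi lattice of $S$, and then compare it, as a unimodular lattice, with $\ell$-adic cohomology. Our hypothesis is precisely condition $(ii)$ of Theorem \ref{thm criterion}, so that theorem supplies both the equation $(K_S)^2 = 10 - \rho$, where $\rho = \rk\mathrm{N}^1(S)$, and the fact that $\mathrm{N}^1(S)$ is a unimodular lattice (this last point is also Proposition \ref{P:unimodularity}). On the other hand Noether's formula gives $(K_S)^2 = 12\chi(O_S) - c_2(S) = 12 - c_2(S)$, and since $b_1 = 0$, Poincar\'e duality forces $b_3 = 0$, so that $c_2(S) = b_0 - b_1 + b_2 - b_3 + b_4 = b_2 + 2$ and $(K_S)^2 = 10 - b_2$. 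Comparing the two expressions yields $\rho = b_2$. Since the geometric Picard number is squeezed between $\rho$ and $b_2(S_{\bar k}) = b_2$, I also get $\rk\mathrm{N}^1(S_{\bar k}) = b_2$.

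Next I would handle the cycle class map. By Poincar\'e duality, $\HH_{et}^2(S_{\bar k},\Z_\ell(1))/torsion$ is a unimodular $\Z_\ell$-lattice of rank $b_2$ for the intersection pairing. The cycle class map $\CH^1(S)\otimes\Z_\ell \to \HH_{et}^2(S_{\bar k},\Z_\ell(1))$ sends numerically trivial divisors into the torsion subgroup (over $\bar k$ these form a group that is divisible modulo a finite subgroup, hence maps to torsion inside a finitely generated $\Z_\ell$-module), so it factors through a homomorphism $\bar c\colon \mathrm{N}^1(S)\otimes\Z_\ell \to \HH_{et}^2(S_{\bar k},\Z_\ell(1))/torsion$ which, by compatibility of cycle classes with products, respects the intersection pairings. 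Since that pairing is nondegenerate on $\mathrm{N}^1(S)$, the map $\bar c$ is injective; it is therefore an isometric embedding of one rank-$b_2$ unimodular $\Z_\ell$-lattice into another, and comparing discriminants forces the index to be a power of $\ell$ whose square is a unit, hence trivial. Thus $\bar c$, and a fortiori the cycle class map, is surjective modulo torsion, which is the displayed assertion.

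Finally, for the ``in particular'' part, fix a field extension $K/k$. Flat base change gives $\mathrm{Ext}^l_{S_K}\big((E_i)_K,(E_j)_K\big) \cong \mathrm{Ext}^l_S(E_i,E_j)\otimes_k K$, so ranks and Euler pairings are unchanged and $\big((E_0)_K,\ldots,(E_{n+1})_K\big)$ is again a numerically exceptional collection of numerical line-bundles on $S_K$. It is of maximal length because $\rk\mathrm{N}^1(S_K) = b_2 = n$: indeed $\rho \le \rk\mathrm{N}^1(S_K) \le \rk\mathrm{N}^1(S_{\overline K}) \le b_2(S_{\overline K}) = b_2$, using invariance of $\ell$-adic Betti numbers under extension of separably closed fields. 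Then $S_K$ meets the hypotheses of Proposition \ref{P:unimodularity}, so $\mathrm{N}^1(S_K)$ is unimodular as well; the base-change map $\mathrm{N}^1(S)\hookrightarrow\mathrm{N}^1(S_K)$ preserves intersection numbers and is injective (a divisor numerically trivial after base change pairs to zero with all divisors coming from $S$), hence is a full-rank isometric embedding of unimodular lattices, which by the discriminant comparison over $\Z$ must be onto, i.e.\ an isometry.

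I expect the main obstacle to be conceptual rather than computational: the only non-formal ingredient is the equation $(K_S)^2 = 10 - \rho$ furnished by Theorem \ref{thm criterion}, after which the Picard number is forced to equal $b_2$ and everything reduces to comparing unimodular lattices. The step needing the most care is the reduction, in the second paragraph, of the cycle class map to an \emph{isometric embedding of unimodular lattices}: one must use both that numerically trivial divisors vanish in $\HH_{et}^2(S_{\bar k},\Z_\ell(1))/torsion$ and the compatibility of the cycle class map with products.
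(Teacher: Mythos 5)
Your argument is correct and follows the paper's proof essentially verbatim: both derive $\rho=b_2$ by comparing Noether's formula $(K_S)^2=10-b_2$ with the equation $(K_S)^2=10-\rho$ from Theorem \ref{thm criterion}, and then use the unimodularity of $\operatorname{N}^1(S)$ to force surjectivity of the cycle class map modulo torsion. The only difference is that the paper leaves the final lattice-theoretic comparison (the isometric embedding of unimodular lattices of equal rank, and the base-change statement) implicit, whereas you spell it out; those details are correct.
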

In other words, in the same way a full exceptional collection remains full exceptional after extension of the base field \cite{kuznetsov}, a numerically exceptional collection of numerical line-bundles of maximal
length on a
surface $S$ with $p_g = q= 0$ remains of maximal length after any field
extension.
Note that, by Theorem \ref{thm surface}, if $\langle E_0,E_1,\ldots,
E_{n+1}\rangle$ is full exceptional, then the cycle class map $\CH^1(S)
\otimes \Z_\ell \rightarrow
\HH_{et}^2(S_{\bar{k}},\Z_\ell(1))$ is
surjective.

Note that surfaces $S$ with $\chi(O_S)=1$ and $b_1=0$ include surfaces with $p_g = q = 0$\,; see \emph{e.g.} \cite[\S 3.4]{liedtke}. These conditions are equivalent in characteristic zero, or if the surface lifts to characteristic zero. However, in positive characteristic, there are examples of surfaces with $b_1=0$ and $p_g=q > 0$, \emph{e.g.}, non-classical Godeaux surfaces \cite{liedtke2}.

\begin{proof}[Proof of Theorem \ref{thm cycle}]
By Noether's formula, $(K_S)^2 = 10- b_2$.
By Theorem \ref{thm criterion}, we also have $(K_S)^2 = 10-\rho$. This implies that $\rho = b_2$. On the other hand, Theorem \ref{thm criterion} also
 says that the
intersection pairing on $\mathrm{N}^1(S)$ is unimodular. This finishes the proof
of the theorem.
\end{proof}

\subsection{On a result of Hille and Perling \cite{hp}}
The aim of this paragraph is to extend the main result of Hille--Perling
\cite{hp} to surfaces that are defined over non-algebraically closed fields and
that admit numerically exceptional collections of maximal length (rather than
full exceptional) consisting of line-bundles.
Hille and Perling proved the following (we refer to \cite[Theorem 11.3]{perling}
for a precise statement)\,:

\begin{thm}[Hille--Perling \cite{hp}] \label{thm hp}
Let $S$ be a smooth projective surface defined over an algebraically closed
field $k$. Assume that $S$ admits a full exceptional collection  $(E_0,\ldots,
E_{n+1})$ consisting of line-bundles. Set $E_{n+2}:=E_0(-K_S)$.  Then to this
sequence there is associated in a canonical way a smooth complete toric surface
with torus invariant prime divisors $\Delta_0,\ldots, \Delta_{n+1}$ such that
$\Delta_i^2+2=\chi(E_{i+1}\otimes E_i^{-1})$ for all $0 \leq i \leq n+1$.
\end{thm}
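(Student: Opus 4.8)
The plan is to distill from the collection a piece of lattice data --- what Hille and Perling call a \emph{toric system} --- and then to realize it, via a chain of toric blow-downs, as the boundary configuration of an honest smooth complete toric surface. To begin, set $D_i := c_1(E_{i+1}) - c_1(E_i) = c_1(E_{i+1}\otimes E_i^{-1}) \in \operatorname{N}^1(S)$ for $0 \le i \le n+1$. Since every $E_i$ is a line-bundle one has $\chi(\mathcal{O}_S) = \chi(E_i,E_i) = 1$ automatically, and since $E_{n+2} = E_0(-K_S)$ one gets $\sum_{i=0}^{n+1} D_i = c_1(E_{n+2}) - c_1(E_0) = -K_S$. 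By Riemann--Roch \eqref{RR}, $\chi(E_{i+1}\otimes E_i^{-1}) = \frac{1}{2} D_i^2 - \frac{1}{2} K_S\cdot D_i + 1$, while the vanishing $\chi(E_{i+1},E_i)=0$ coming from semi-orthogonality reads $K_S\cdot D_i = -2 - D_i^2$; combining these gives $\chi(E_{i+1}\otimes E_i^{-1}) = D_i^2 + 2$ for every $i$. So the whole statement reduces to producing, canonically from the collection, a smooth complete toric surface whose cyclically ordered torus-invariant prime divisors $\Delta_0,\dots,\Delta_{n+1}$ satisfy $\Delta_i^2 = D_i^2$.

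Next I would pin down the complete intersection table of the $D_i$. Proposition \ref{prop intersection matrix}, applied to the numerically exceptional collection $(E_0,\dots,E_{n+1})$, already gives $D_i\cdot D_{i+1}=1$ for $0\le i\le n$ and $D_i\cdot D_j = 0$ whenever $i,j\le n$ and $|i-j|\ge 2$. Intersecting the identity $\sum_i D_i = -K_S$ with each $D_j$ in turn, and substituting the values just obtained, forces the remaining \emph{cyclic} relations $D_{n+1}\cdot D_0 = 1$, $D_{n+1}\cdot D_n = 1$, $D_{n+1}\cdot D_j = 0$ for $1\le j\le n-1$, and $K_S\cdot D_{n+1} = -2 - D_{n+1}^2$ (equivalently, these relations involving $D_{n+1}$ follow by applying the computation of Proposition \ref{prop intersection matrix} to the helix rotations $(E_1,\dots,E_{n+1},E_0(-K_S)),\dots$ of the collection, which are again full exceptional up to an even shift and so carry the same Euler characteristics). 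Thus $(D_0,\dots,D_{n+1})$ is a toric system: a cyclically ordered tuple in $\operatorname{N}^1(S)$ with consecutive intersection numbers $1$, all others $0$, and sum $-K_S$. Moreover, since the collection is full, the proof of Theorem \ref{thm surface} forces the integral Chow motive of $S$ to be a sum of Lefschetz motives, whence $b_1 = 0$, $\rho = n$, $\CH^2(S) = \Z$, and Noether's formula gives $K_S^2 = 12 - (n+2)$; hence $\sum_i D_i^2 = K_S^2 - 2(n+2) = 12 - 3(n+2)$, precisely the value of $\sum_i \Delta_i^2$ for a smooth complete toric surface with $n+2$ torus-invariant divisors.

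The core step is then to realize the cyclic integer sequence $(a_0,\dots,a_{n+1})$, with $a_i := D_i^2$, as the self-intersection sequence of the boundary of a smooth complete toric surface. I would argue by descending induction on the length, using the \emph{contraction} move: if $a_i = -1$ for some $i$, delete $D_i$ from the toric system and add $D_i$ to each of its two neighbours; a short computation shows the result is again a toric system, now of length $n+1$, with anticanonical class $-K_S + D_i$ of square $12 - (n+1)$, and with self-intersection sequence obtained from $(a_0,\dots,a_{n+1})$ by removing the entry $-1$ and raising the two adjacent entries by $1$ --- on the toric side this is exactly the blow-down of a torus-invariant $(-1)$-curve. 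The induction stops at toric systems with no self-intersection equal to $-1$, which should be identified with the minimal ones, namely $(1,1,1)$, realized by $\PP^2$, and $(0,r,0,-r)$, realized by the Hirzebruch surface $\mathbb{F}_r$. Reversing the chain of contractions, each step now a torus-invariant blow-up (insert a new ray of self-intersection $-1$ between two consecutive rays and drop each of their self-intersections by $1$), builds a smooth complete toric surface $Y$ whose boundary self-intersection sequence is $(a_0,\dots,a_{n+1})$; and because a complete smooth rank-$2$ fan is reconstructed from such a cyclic sequence (the primitive ray generators obey $v_{i+1} = -a_i v_i - v_{i-1}$ once a basis $v_0,v_1$ is fixed), the surface $Y$ and the induced order on its boundary divisors depend only on the original collection. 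Together with $\Delta_i^2 = a_i = D_i^2 = \chi(E_{i+1}\otimes E_i^{-1}) - 2$ from the first step, this yields the theorem.

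The hard part will be the base case of that induction --- showing that a toric system with no self-intersection equal to $-1$ is necessarily $(1,1,1)$ or $(0,r,0,-r)$ --- together with the verification that the contraction move genuinely keeps one inside the class of (maximal) toric systems on smooth surfaces; this is the combinatorial heart of the Hille--Perling analysis (see \cite{hp} and \cite[\S 10--11]{perling}). It is also the only point at which the hypothesis of algebraic closedness enters: over a non-closed field the $(-1)$-classes one wishes to contract need not be represented by $k$-rational curves, so the toric surface produced by the reduction need not descend to $k$.
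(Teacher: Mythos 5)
Be aware that the paper does not actually prove Theorem \ref{thm hp}: it is quoted from Hille--Perling \cite{hp}, and the only argument given in that subsection is the proof of the subsequent unnumbered generalization, which extracts an abstract toric system from the collection and then invokes the combinatorial core of \cite[Theorem 3.5]{hp}. Your proposal follows exactly that strategy, and the parts you carry out in full are correct: the identity $\chi(E_{i+1}\otimes E_i^{-1}) = D_i^2+2$ via Riemann--Roch and semi-orthogonality, the cyclic closure relations for $(D_0,\ldots,D_{n+1})$ obtained by pairing $\sum_i D_i = -K_S$ against each $D_j$ (the paper gets the same relations by setting the extra divisor equal to $-K_S-\sum_i D_i$ and citing Proposition \ref{prop intersection matrix}), and the normalization $\sum_i D_i^2 = 12-3(n+2)$. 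Your route to $K_S^2=10-n$ goes through Theorem \ref{thm surface} and Noether's formula, where the paper's generalization instead quotes Theorem \ref{thm criterion}; both are valid, and yours avoids the lattice-theoretic appendix at the cost of using the full-ness of the collection more heavily. The one thing to keep in view is that what you candidly label the ``hard part'' --- that the contraction move preserves toric systems and that a toric system with no $(-1)$-entry must be $(1,1,1)$ or $(0,r,0,-r)$ --- is essentially the entire content of the theorem beyond bookkeeping, so your text is a reduction to the combinatorial heart of \cite{hp} rather than a self-contained proof. Since the paper itself treats this statement as a citation and defers precisely the same step, that is not a gap relative to the source, but it should be stated as such.
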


In light of Theorem \ref{thm criterion}, the assumption that the base field $k$ is algebraically closed in Theorem \ref{thm hp} can be lifted\,:

\begin{thm}
Let $S$ be a smooth projective surface defined over a field $k$. Assume that
$\chi(O_S)=1$ and that $S$ admits a numerically exceptional collection
$(E_0,\ldots, E_{n+1})$ of maximal length, consisting of line-bundles. Then the
conclusion of Theorem \ref{thm hp} holds.
\end{thm}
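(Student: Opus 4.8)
The plan is to extract from the given collection a \emph{toric system} in $\mathrm{N}^1(S)$ and to run Perling's construction on it, the point being that that construction is purely numerical: it uses neither the algebraic closedness of $k$ nor the fullness of the exceptional sequence (nor, in fact, that the sequence is exceptional rather than merely numerically exceptional). Concretely, following Theorem \ref{thm hp}, set $E_{n+2} := E_0(-K_S)$ and put $A_i := c_1(E_{i+1}) - c_1(E_i) \in \mathrm{N}^1(S)$ for $0 \le i \le n+1$, so that $\sum_{i=0}^{n+1} A_i = c_1(E_{n+2}) - c_1(E_0) = -K_S$. Note that $\chi(E_{i+1} \otimes E_i^{-1}) = \chi(O_S(A_i))$, so the self-intersection numbers $A_i^2$ are the data that must match the boundary of the sought toric surface.

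The first step is to check that $(A_0, \dots, A_{n+1})$ obeys the numerical relations defining a toric system. By Riemann--Roch and the numerical exceptionality of $(E_0, \dots, E_{n+1})$ --- this is exactly the computation carried out in the proof of Proposition \ref{prop intersection matrix} --- one has $K_S \cdot A_i = -2 - A_i^2$ for $0 \le i \le n$, $A_i \cdot A_{i+1} = 1$ for $0 \le i \le n-1$, and $A_i \cdot A_j = 0$ whenever $0 \le i, j \le n$ and $|i-j| \ge 2$. Substituting $A_{n+1} = -K_S - \sum_{i=0}^{n} A_i$ into the remaining intersection products, a short direct computation gives the cyclic wrap-around relations $A_{n+1} \cdot A_0 = A_{n+1} \cdot A_n = 1$ and $A_{n+1} \cdot A_j = 0$ for $1 \le j \le n-1$, as well as $K_S \cdot A_{n+1} = -2 - A_{n+1}^2$. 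Hence $K_S \cdot A_i = -2 - A_i^2$ for every $i$, and therefore, by Riemann--Roch, $\chi(E_{i+1} \otimes E_i^{-1}) = \chi(O_S(A_i)) = \tfrac12 A_i \cdot (A_i - K_S) + 1 = A_i^2 + 2$ for all $0 \le i \le n+1$; this is the identity in the conclusion of Theorem \ref{thm hp}.

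The second step invokes Theorem \ref{thm criterion}. Since $S$ carries a numerically exceptional collection of line-bundles of maximal length, that theorem gives $K_S^2 = 10 - \rk \mathrm{N}^1(S) = 10 - n$, together with the fact that $\mathrm{N}^1(S)$ is unimodular (in fact one of the three explicit lattices listed there). The bookkeeping identity $\sum_{i=0}^{n+1} (A_i^2 + 2) = \big(\sum_{i=0}^{n+1} A_i\big)^2 = K_S^2 = 10 - n$ --- where the cross terms of the square contribute exactly $2(n+2)$ thanks to the cyclic intersection pattern --- shows that the sequence $(A_0^2, \dots, A_{n+1}^2)$ satisfies precisely the constraint imposed on the torus-invariant boundary divisors of a smooth complete toric surface with $n+2$ rays. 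Thus $(A_0, \dots, A_{n+1})$ is a toric system of exactly the kind to which Perling's construction --- the one underlying Theorem \ref{thm hp}, see \cite[Theorem 11.3]{perling} --- canonically associates a smooth complete toric surface $T$ with torus-invariant prime divisors $\Delta_0, \dots, \Delta_{n+1}$ such that $\Delta_i^2 = A_i^2$, whence $\Delta_i^2 + 2 = \chi(E_{i+1} \otimes E_i^{-1})$. This is the assertion of Theorem \ref{thm hp}.

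The one point that needs care --- and where I expect the (modest) main obstacle to lie --- is the claim that Perling's construction consumes only this numerical input, namely the toric system together with $K_S^2 = 10 - n$ and the unimodularity of $\mathrm{N}^1(S)$, and in particular never appeals to the base field being algebraically closed (for instance through the availability of enough rational curves) nor to the exceptional sequence being full. This is apparent from the combinatorial formulation of \cite[Theorem 11.3]{perling}, which proceeds entirely in terms of the sequence of self-intersection numbers. One could instead try to reduce to the algebraically closed case by base-changing to $\bar k$ --- where the base-changed collection remains a numerically exceptional collection of line-bundles, Euler characteristics being preserved, and remains of maximal length by an argument as in Theorem \ref{thm cycle} --- but this reduction still requires noting that Theorem \ref{thm hp} is valid for numerically exceptional, not only full, collections of line-bundles.
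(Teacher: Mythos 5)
Your proof is correct and follows essentially the same route as the paper: you form the same cyclic system of difference divisors (including the wrap-around divisor $-K_S-\sum D_i$), verify the same intersection relations via Proposition \ref{prop intersection matrix}, use Theorem \ref{thm criterion} to get $K_S^2 = 10-n$ and hence the toric-system normalization $\sum D_i^2 = 12-3(n+2)$, and conclude by the purely combinatorial nature of the Hille--Perling construction. The base-change alternative you sketch at the end is unnecessary, as you correctly suspect.
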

\begin{proof} Let us define as before, for $1\leq i \leq n+1$, $D_i:=
c_1(E_i)-c_1(E_{i-1})$. Let us also define, following Hille and Perling \cite[p.
1242]{hp}, $D_{0} := -K_S-\sum_{i=1}^{n+1} D_i$ (compare with Proposition \ref{P:unimodular}). By convention, we set
$D_{i+n+2} :=D_i$. Then, by Proposition \ref{prop intersection matrix},
 we get

\begin{enumerate}[(i)]
\item $D_i\cdot D_{i+1} = 1$ for all $i$\,;
\item $D_i\cdot D_j = 0$ for $i\neq j$ and $\{i,j\}\neq\{l,l+1\}$ for all $0
\leq l \leq n+1$\,;
\item $\sum_{i=1}^{n+2} D_i = -K_S$.
\end{enumerate}
The data consisting of $\{D_i, 1\leq i\leq n+2\}$ will define an \emph{abstract
toric system} in the sense of \cite[Definition 2.6]{hp} if the extra condition
\begin{enumerate}[(i)]
\item[(iv)] $\sum_{i=1}^{n+2} (D_i)^2 = 12 - 3(n+2)$
\end{enumerate}
\noindent holds. Conditions $(i)$, $(ii)$ and $(iii)$ yield $(K_S)^2 = 2(n+2) + \sum_{i=1}^{n+2}
(D_i)^2$. But then, by our main Theorem \ref{thm criterion}, we have
$(K_S)^2=10-n$. Therefore (iv) does indeed hold, so that  $\{D_i, 1\leq i\leq
n\}$ does define an abstract toric system. 
The theorem then follows because the proof of \cite[Theorem 3.5]{hp} depends
only on the combinatorial data of an abstract toric system.
\end{proof}

\subsection{Exceptional collections and rational surfaces} 
The following theorem is due to Manin \cite{manin} and Iskovskikh \cite{isk}\,;
see also \cite[Theorem 3.9]{hassett}.
\begin{thm} \label{thm geomrat surfaces} Let $S$ be a smooth projective minimal
surface defined over a
perfect field $k$. Assume that $S_{\bar k}$ is rational. Then $S$ is one of the
following\,:
\begin{itemize}
\item $\PP^2$\,;
\item $S\subset \PP^3$ a smooth quadric with $\mathrm{Pic}(S)=\Z$\,;
\item a del Pezzo surface with $\mathrm{Pic}(S)=\Z K_S$\,;
\item a conic bundle $f:S\rightarrow C$ over a conic, with $\mathrm{Pic}(S)
\cong \Z \oplus \Z$.
\end{itemize}
\end{thm}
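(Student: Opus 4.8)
The plan is to run the two-dimensional minimal model program over the perfect field $k$, tracking the geometry on $S_{\bar k}$ by Galois descent. First I would base-change to a separable (hence algebraic, since $k$ is perfect) closure $\bar k$: as $S_{\bar k}$ is rational, Castelnuovo's rationality criterion and the classification of rational surfaces over an algebraically closed field show that $S_{\bar k}$ is $\PP^2_{\bar k}$, a Hirzebruch surface $\F_n$ with $n \neq 1$, or a blow-up of one of these at finitely many closed points. In particular $\Pic(S_{\bar k})$ is a free $\Z$-module of finite rank carrying a non-degenerate intersection pairing of signature $(1,\rho(S_{\bar k})-1)$ and a distinguished element $K_{S_{\bar k}}$ with $K_{S_{\bar k}}^2 = 9 - r$, where $r$ is the number of blow-downs to $\PP^2$. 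The absolute Galois group $G := \Gal(\bar k/k)$ acts continuously on $\Pic(S_{\bar k})$, preserving the pairing and fixing $K_{S_{\bar k}}$; from the Hochschild--Serre spectral sequence one gets an exact sequence $0 \to \Pic(S) \to \Pic(S_{\bar k})^G \to \mathrm{Br}(k)$ (using $H^i(S_{\bar k},\calO) = 0$ for $i = 1,2$), so that $\rk \Pic(S) = \rk \Pic(S_{\bar k})^G$.

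Next I would reformulate minimality: $S$ is minimal over $k$ if and only if $S_{\bar k}$ carries no non-empty $G$-stable set of pairwise disjoint $(-1)$-curves, for such a set is supported on a $G$-invariant --- hence $k$-rational --- effective divisor, and by Castelnuovo's contractibility criterion it is blown down by a birational morphism $S \to S'$ of smooth projective $k$-surfaces, contradicting minimality. Since $S_{\bar k}$ is rational, $K_{S_{\bar k}}$ is not pseudo-effective, so $S$ is not a minimal model; invoking the cone and contraction theorems over $k$ (Mori, Koll\'ar) there is a $K_S$-negative extremal contraction, and --- no contraction to a surface being possible by minimality --- $S$ must be a Mori fibre space over $\Spec k$. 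For surfaces this leaves exactly two possibilities: either $\rho(S) = 1$, in which case $-K_S$ is ample and $S$ is a del Pezzo surface of Picard rank one; or $\rho(S) = 2$ and there is a conic-bundle morphism $f : S \to C$ onto a smooth projective curve with $-K_S \cdot F = 2$ on a fibre $F$. In the latter case $C_{\bar k} \cong \PP^1$ (it is the base of a ruled rational surface), so $C$ is a smooth conic, and $\rho(S) = \rho(S/C) + \rho(C) = 2$ gives $\Pic(S) \cong \Z \oplus \Z$ --- the fourth item.

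Finally I would refine the case $\rho(S) = 1$ according to the anticanonical degree $d := (-K_S)^2 \in \{1,\dots,9\}$, writing $\Pic(S) = \Z L$ with $L$ ample and $-K_S = mL$, $m \geq 1$: for $d = 9$ one has $S_{\bar k} \cong \PP^2$, so $m = 3$ and $S$ is a form of $\PP^2$ (a Severi--Brauer surface, equal to $\PP^2$ when split); for $d = 8$ the only degree-$8$ geometrically del Pezzo surface of invariant Picard rank one is $S_{\bar k} \cong \PP^1 \times \PP^1$ (with $G$ swapping the two rulings; $\F_1$ is not minimal and $\F_2$ is not del Pezzo), so $m = 2$, $L^2 = 2$, and $|L|$ embeds $S$ as a smooth quadric surface in $\PP^3$ with $\Pic(S) = \Z$; and for $1 \leq d \leq 6$ the class $-K_S$ is primitive, so $\Pic(S) = \Z K_S$ --- the value $d = 7$ does not arise, a geometrically degree-$7$ del Pezzo surface always carrying the $G$-invariant $(-1)$-curve that is the strict transform of the line through the two blown-up points. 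Reassembling these, $S$ belongs to one of the four families listed.

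I expect the main obstacle to be making the relative surface minimal model program over the non-closed field $k$ rigorous --- in particular the descent step, that a $G$-invariant configuration of disjoint $(-1)$-curves, or more generally a $K_S$-negative extremal ray, can be contracted \emph{over $k$} to a smooth projective $k$-surface. In characteristic zero this is Manin's classical argument; over an arbitrary perfect field one needs the cone theorem and Castelnuovo's criterion in a form compatible with Galois descent, which is exactly where perfectness of $k$ enters (ensuring $S_{\bar k}$ is smooth and that $\Pic(S_{\bar k})$ is a well-behaved $G$-lattice). By contrast the bookkeeping in the penultimate paragraph --- reconciling the admissible $G$-lattice structures on $\Pic(S_{\bar k})$ of small invariant rank with the degree-wise geometric list of del Pezzo surfaces and conic bundles --- is lengthy but mechanical once the Mori-fibre-space dichotomy is in hand. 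Full details are carried out in \cite{manin, isk}; see also \cite[Theorem 3.9]{hassett}.
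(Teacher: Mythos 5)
The paper offers no proof of this statement: it is quoted as a theorem of Manin and Iskovskikh, with references to \cite{manin}, \cite{isk} and \cite[Theorem 3.9]{hassett}, so there is no internal argument to compare against. Your sketch is the standard modern route taken in those references --- Galois descent on $\Pic(S_{\bar k})$ via Hochschild--Serre, the reformulation of $k$-minimality in terms of $G$-stable configurations of disjoint $(-1)$-curves, and the Mori fibre space dichotomy $\rho(S)=1$ (del Pezzo) versus $\rho(S)=2$ (conic bundle over a conic) --- and the architecture is sound, with the genuinely hard input (running the two-dimensional MMP over a perfect field of arbitrary characteristic compatibly with the Galois action) correctly identified and deferred to the literature.

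Two slips in your final bookkeeping paragraph are worth flagging, though neither endangers the statement because the surfaces concerned still land in the third bullet. For $d=9$ you write $-K_S=3L$, but this presupposes that $S$ is split: for a non-split Severi--Brauer surface the image of $\Pic(S)$ in $\Pic(S_{\bar k})=\Z$ is generated by $\calO(3)$, so $m=1$ and $\Pic(S)=\Z K_S$; such a surface is \emph{not} an instance of the first bullet but of the third. For $d=8$ you assert that a $\rho(S)=1$ form of $\PP^1\times\PP^1$ is always a smooth quadric in $\PP^3$; this requires the class $\calO(1,1)\in\Pic(S_{\bar k})^G$ to descend to $\Pic(S)$, and the obstruction in $\mathrm{Br}(k)$ can be non-zero (e.g.\ for the Weil restriction of a conic over a quadratic extension whose Brauer class has non-trivial corestriction), in which case $\Pic(S)=\Z\,\calO(2,2)=\Z K_S$ and $S$ again belongs to the third bullet rather than the second. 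With these reassignments the four-fold disjunction of the theorem is still exhausted, so the proof is complete once the MMP input is granted.
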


The following theorem is a consequence of Theorem \ref{thm criterion}\,; it
shows that geometrically rational, but non-rational, minimal surfaces defined over a
perfect field do not admit an exceptional collection of maximal length.
\begin{thm} \label{thm rational}
Let $S$ be a geometrically rational, smooth projective surface defined over a perfect field
$k$ that admits a
numerically exceptional collection $(E_0,\ldots, E_{n+1})$ of maximal length. Assume either that $S$ is minimal, or that the objects $E_i$ are numerical line-bundles. Then $S$ is rational.
\end{thm}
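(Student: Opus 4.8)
The plan is to split into the two cases allowed by the hypothesis and reduce each to the classification in Theorem \ref{thm geomrat surfaces} plus the lattice-theoretic constraints of Theorem \ref{thm criterion}. First, suppose the objects $E_i$ are numerical line-bundles. Then by the implication $(ii)\Rightarrow(iii)$ of Theorem \ref{thm criterion} (applied with $n=\rho$, noting that a numerically exceptional collection of numerical line-bundles of maximal length is exactly condition $(ii)$ there), the lattice $\operatorname{N}^1(S)$ is unimodular and $(K_S)^2 = 10-\rho$. Since $S$ is geometrically rational we have $b_1=0$, so Theorem \ref{thm cycle} applies: the base-change $\operatorname{N}^1(S)\to\operatorname{N}^1(S_{\bar k})$ is an isometry. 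But $S_{\bar k}$ is rational, so $\operatorname{N}^1(S_{\bar k})$ is an odd unimodular lattice of rank $b_2(S_{\bar k})=\rho$; the Galois-invariance forced by the isometry then forces $S$ itself to be rational — more precisely, I would argue that $S$ has a rational point (Corollary \ref{cor zerocycle} already gives a zero-cycle of degree $1$, and for geometrically rational surfaces a zero-cycle of degree $1$ upgrades to a rational point by a theorem of Coombes/Manin, or one argues directly on the minimal model) and that the Galois action on $\operatorname{Pic}(S_{\bar k})$ is trivial, which by Manin's criterion makes $S$ rational.

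For the other case, $S$ minimal (but the objects not assumed to be line-bundles): first run $S$ through Theorem \ref{thm geomrat surfaces}. If $S=\PP^2$ it is rational and we are done. So assume $S$ is a minimal quadric with $\operatorname{Pic}(S)=\Z$, or a minimal del Pezzo with $\operatorname{Pic}(S)=\Z K_S$, or a conic bundle with $\operatorname{Pic}(S)\cong\Z^2$. In every remaining case, Proposition \ref{P:unimodularity} (which does not assume the collection consists of line-bundles) tells us $\operatorname{N}^1(S)$ is unimodular and $\rho=b_2-2b_1=b_2$ modulo $8$, while Noether's formula gives $(K_S)^2=10-b_2=10-\rho$. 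Now I would rule out each geometrically-but-not-rational possibility by confronting it with these constraints together with van der Blij's lemma ($(K_S)^2\equiv 2-\rho\pmod 8$, used as in the proof of Proposition \ref{P:unimodularity}) and the fact that $\rho=b_2$ forces $\operatorname{N}^1(S)=\operatorname{N}^1(S_{\bar k})$ as before. For the non-split quadric ($\rho=1$, $b_2=2$) one gets $\rho\neq b_2$, contradiction. For a minimal del Pezzo with $\operatorname{Pic}=\Z K_S$ and $(K_S)^2=d$, unimodularity of $\langle K_S\rangle$ forces $d=\pm1$; but then $(K_S)^2=10-\rho$ with $\rho=1$ forces $d=9$, so $d=1$ is impossible and the del Pezzo case is excluded unless it is $\PP^2$. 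For the conic bundle with $\rho=2$, one gets $(K_S)^2=8$, and $\operatorname{N}^1(S)$ unimodular of signature $(1,1)$ is either $U$ or $\langle1\rangle\oplus\langle-1\rangle$; comparing with $\operatorname{N}^1(S_{\bar k})\cong\langle1\rangle\oplus\langle-1\rangle$ (the lattice of $\PP^1\times\PP^1$, or a one-point blow-up of $\PP^2$) and the Galois action on the ruling, one finds the surface must be split, hence $\mathbb P^1$-bundle over $\PP^1$ with a section, hence rational.

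The main obstacle I expect is the last case: disposing of minimal conic bundles cleanly. The lattice $\operatorname{N}^1(S)$ can genuinely be $\langle1\rangle\oplus\langle-1\rangle$ without the surface being split (e.g. a conic bundle over a conic without a rational point, or with reducible geometric fibres permuted by Galois), so the unimodularity constraint alone does not finish the job — one really needs the input from Theorem \ref{thm cycle}/Theorem \ref{thm criterion} that the base-change $\operatorname{N}^1(S)\to\operatorname{N}^1(S_{\bar k})$ is a Galois-equivariant isometry, which trivialises the Galois action on the geometric Picard group; combined with a degree-$1$ zero-cycle this should force rationality via Manin--Iskovskikh. Care is needed because in the "minimal, not line-bundles" branch we do not a priori have $\rho=b_2$ handed to us by Theorem \ref{thm cycle} directly (that theorem assumes numerical line-bundles), so I would instead derive $\rho=b_2$ from Proposition \ref{P:unimodularity} together with Noether — $(K_S)^2=10-b_2$ always, and if the conic-bundle or del Pezzo numerics were consistent with $\rho<b_2$ one must check whether unimodularity of $\operatorname{N}^1(S)$ can still hold; I expect it cannot, but verifying this is the delicate bookkeeping step.
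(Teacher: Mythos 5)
Your overall strategy (reduce to the Manin--Iskovskikh classification of minimal models and confront each non-rational case with the lattice constraints) is the same as the paper's, but there is a genuine gap at exactly the point you flag as ``delicate bookkeeping,'' and your expectation there is wrong. In the branch where $S$ is minimal and the objects are not assumed to be line-bundles, the only constraints you extract are those of Proposition \ref{P:unimodularity}: $\operatorname{N}^1(S)$ unimodular and $\rho\equiv b_2\pmod 8$. A minimal del Pezzo surface of degree $1$ with $\operatorname{Pic}(S)=\Z K_S$ satisfies \emph{both}: here $\operatorname{N}^1(S)=\Z K_S\cong\langle 1\rangle$ is unimodular, $\rho=1$, $b_2=10-K_S^2=9$, and $1\equiv 9\pmod 8$. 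Such surfaces exist and are not rational, so these constraints alone cannot finish the del Pezzo case. Your write-up papers over this by asserting $(K_S)^2=10-\rho$ in this branch (``Noether's formula gives $(K_S)^2=10-b_2=10-\rho$''), but Noether only gives $(K_S)^2=10-b_2$, and you only know $\rho=b_2$ modulo $8$ --- which is consistent with $K_S^2=1$, $\rho=1$. The paper closes this hole with Perling's Theorem \ref{thm perling}: for a minimal del Pezzo with $\rho=1$ the collection (of length $3$) mutates into one consisting of $3$ numerical line-bundles, so Theorem \ref{thm criterion} applies and forces $(K_S)^2=10-\rho=9$ on the nose, contradicting $K_S^2=1$ from unimodularity. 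Without some such input, the degree-$1$ del Pezzo survives your argument.

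Two further, smaller issues. First, in the line-bundle branch you outsource the conclusion to ``a theorem of Coombes/Manin'' (degree-$1$ zero-cycle $\Rightarrow$ rational point on a geometrically rational surface) and to ``Manin's criterion''; the first statement is not a theorem in this generality (for cubic surfaces it is a well-known open problem), and the second is left too vague to check --- the actual rationality criterion (Iskovskikh: rational point and $K_S^2\geq 5$) still requires you to pin down $K_S^2$, i.e.\ to redo the case analysis. The paper instead argues directly on the minimal model $\Sigma$ (Lemma \ref{L:min-unimodular} reduces to $\Sigma$ with $\operatorname{N}^1(\Sigma)$ unimodular), and in the conic-bundle case gives a hands-on proof that unimodularity forces $C=\PP^1$ and produces a class $D$ with $D\cdot F=1$, hence a section, hence smooth fibres and a $\PP^1$-bundle; your appeal to ``the Galois action on the ruling'' would need to be fleshed out to that level. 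Second, for non-minimal $S$ in the line-bundle branch you still need the descent to the minimal model; the $\rho=b_2$ statement from Theorem \ref{thm cycle} does make this work, but it is not spelled out.
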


\begin{lem} \label{L:min-unimodular}
Let $S$ be a smooth projective surface over a perfect
field $k$, and denote $\Sigma$ a minimal model of $S$. Assume that the N\'eron--Severi lattice $\operatorname{N}^1(S)$ is unimodular. Then $S$ is obtained from $\Sigma$ by successively blowing up rational $k$-points on $\Sigma$. Moreover, $\operatorname{N}^1(\Sigma)$ is unimodular.
\end{lem}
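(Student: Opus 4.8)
The plan is to factor the morphism $S\to\Sigma$ into elementary blow-ups and keep track of the discriminant of the N\'eron--Severi lattice at each stage. Running the minimal model program on $S$ produces a birational morphism $f\colon S\to\Sigma$ onto a minimal model, and by the classical structure theory of birational morphisms between smooth projective surfaces over a perfect field (see, e.g., \cite{manin}) this $f$ factors as a composition of blow-ups $f_i\colon S_i\to S_{i-1}$, $1\le i\le m$, with $S_m=S$ and $S_0=\Sigma$, where each $f_i$ is the blow-up of a closed point $p_i\in S_{i-1}$. Write $d_i:=[\kappa(p_i):k]$, so that $p_i$ is a rational $k$-point of $S_{i-1}$ exactly when $d_i=1$.

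Next I would analyse a single such blow-up $g\colon S'\to T$ at a closed point $p$ with residue field $\kappa$ of degree $d:=[\kappa:k]$. The exceptional divisor is $E\cong\PP^1_\kappa$, and the blow-up formula for Picard groups together with the projection formula gives an \emph{orthogonal} $\Z$-module decomposition $\operatorname{N}^1(S')=g^*\operatorname{N}^1(T)\oplus\Z[E]$ (orthogonality because $g^*D\cdot[E]=D\cdot g_*[E]=0$, as $g_*[E]=0$). The one point to check is the self-intersection: since $O_{S'}(E)|_E$ is the normal bundle $O_{\PP^1_\kappa}(-1)$, which has vanishing cohomology, whereas $\chi(O_E)=\dim_k\kappa=d$, Riemann--Roch on the curve $E$ gives $[E]^2=\deg_k\!\big(O_{S'}(E)|_E\big)=-d$. (As a sanity check, after base change to $\bar k$ the divisor $E$ becomes $d$ disjoint $(-1)$-curves, whose sum has self-intersection $-d$.) Hence $|\!\det\operatorname{N}^1(S')|=d\cdot|\!\det\operatorname{N}^1(T)|$.

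Applying this iteratively along the tower $S=S_m\to\cdots\to S_0=\Sigma$ yields $|\!\det\operatorname{N}^1(S)|=\big(\prod_{i=1}^{m}d_i\big)\cdot|\!\det\operatorname{N}^1(\Sigma)|$. By hypothesis $\operatorname{N}^1(S)$ is unimodular, so the left-hand side equals $1$; as each $d_i$ and $|\!\det\operatorname{N}^1(\Sigma)|$ is a positive integer, it follows that $d_i=1$ for all $i$ --- i.e.\ each $p_i$ is a rational $k$-point of $S_{i-1}$, which is the first assertion --- and that $|\!\det\operatorname{N}^1(\Sigma)|=1$, i.e.\ $\operatorname{N}^1(\Sigma)$ is unimodular.

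The only delicate points here are the bookkeeping over the non-closed field $k$: verifying that $\Z[E]$ is genuinely a direct summand of $\operatorname{N}^1(S')$ (using $g_*g^*=\mathrm{id}$, $g_*[E]=0$, and the blow-up formula for $\operatorname{Pic}$), and pinning down the normalisation $[E]^2=-d$ with the correct sign. Granting these, the lemma reduces to the one-line discriminant computation above, and I expect no real obstacle beyond it.
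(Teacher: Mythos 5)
Your proof is correct and follows essentially the same route as the paper: factor $S\to\Sigma$ into blow-ups of closed points, observe that blowing up a closed point of degree $d$ adds an orthogonal rank-one summand of discriminant $-d$ to $\operatorname{N}^1$ (the paper phrases this via the Galois orbit of $d$ disjoint $(-1)$-curves with sum $E$, $E^2=-d$, rather than via $E\cong\PP^1_\kappa$ and Riemann--Roch, but these are the same computation), and conclude from unimodularity that every $d_i=1$. No gaps.
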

\begin{proof}
First note that $S$ is obtained from
$\Sigma$ by
successively blowing up Galois-invariant closed points\,; see for instance
\cite{hassett}.
Let $\tilde{T}$ be the blow-up of a smooth projective surface $T$ over $k$ along a Galois-invariant closed point of degree $d\geq 1$. Such a blow-up produces a Galois-invariant
collection of pairwise disjoint $(-1)$-curves, say $E_1,\ldots, E_d$,
and the N\'eron--Severi lattice of $\tilde{T}$ splits orthogonally as $\langle E \rangle
\oplus N'$, where $E=E_1+\cdots + E_d$ and $E^2=-d$, for some lattice $N'$.
This establishes the lemma.
\end{proof}

\begin{proof}[Proof of Theorem \ref{thm rational}] 
Let $S$ be a geometrically rational, smooth projective surface defined over a perfect field
$k$ that admits a
numerically exceptional collection  of maximal length. By Proposition \ref{P:unimodularity}, its N\'eron--Severi lattice is unimodular. It follows from Lemma \ref{L:min-unimodular} that $S$ is obtained from one of the minimal surfaces
listed in Theorem \ref{thm geomrat surfaces} by successively blowing up rational
points. Denote $\Sigma$ a minimal model for $S$\,; $\operatorname{N}^1(\Sigma)$ is unimodular.

If $\Sigma = \PP^2$, then $S$ is obviously rational.

If $\Sigma$  is a smooth quadric in $\PP^3$ with
$\Pic(\Sigma)=\Z$, then $\rho(\Sigma) \neq b_2(\Sigma) = 2$ modulo $8$. It follows that $\rho(S) \neq b_2(S)$ modulo $8$. Therefore, by Proposition \ref{P:unimodularity}, $S$ does not admit a numerically exceptional collection of maximal length and we get a contradiction.

If $\Sigma$ is a conic bundle $f:\Sigma\rightarrow C$ over a conic, with $\mathrm{Pic}(\Sigma)
\cong \Z \oplus \Z$, then by the unimodularity $\Sigma$ has a degree-one zero-cycle and hence $C = \PP^1$.   Note that $\operatorname{N}^1(\Sigma)$ is spanned by a fiber $F$ and by a multi-section $D$. 
Indeed $\operatorname{N}^1(\Sigma)$  cannot be spanned by two vertical components, \emph{i.e.}, by irreducible components of some fibers, since otherwise the intersection pairing on $\operatorname{N}^1(\Sigma)$ would be negative, contradicting the Hodge index theorem. Suppose now that $\operatorname{N}^1(\Sigma)$ is spanned by  two multi-sections $D$ and $D'$. Then there exist co-prime integers $u$ and $v$ such that $uD + vD'$ is torsion in $\CH^1(\Sigma_\eta)$, where $\Sigma_\eta$ is the conic that is the generic fiber of $f$. Thus, by localization for Chow groups, we see that $\operatorname{N}^1(\Sigma)$ is spanned by $D$ and by a vertical component. But since  $\operatorname{N}^1(\Sigma)$ has rank two we see \cite[\S 3.2]{hassett} that in fact $\operatorname{N}^1(\Sigma)$ is spanned by $D$ and a fiber $F$, as claimed. Since $F^2=0$, the unimodularity yields $D\cdot F =	1$, and hence that $D$ is in fact a section. We conclude by showing that this implies that $f$ is a smooth $\PP^1$-bundle\,: let $F'$ be a fiber of $f$ and denote $k_1/k$ the field of definition of $F'$.
Since $D\cdot F' = 1$,  we see that $F'$ has a $k_1$-rational point and that $F'$ is smooth (since otherwise the two geometric components of $F'$ would be defined over $k_1$ and thus would not be in the same Galois orbit and hence $\operatorname{N}^1(\Sigma)$ would have rank $\geq 3$). This implies that $F' = \PP^1_{k_1}$. This
proves that $\Sigma$ is a $\PP^1$-bundle over $\PP^1$, and hence that $S$ is rational.

If $\Sigma$ is a del
Pezzo surface with $\Pic(\Sigma)=\Z K_\Sigma$, then we distinguish between two cases. 
Assume $S$ admits a numerically exceptional collection of maximal length, consisting of numerical line-bundles. Note that if a surface $S$ is obtained from a surface $\Sigma$ by successively blowing up $k$-rational points, then, since each such blowup increases $\rk(\operatorname{N}^1)$ by $1$ and decreases $K^2$ by $1$, $(K_S)^2=10-\rk \mathrm{N}^1(S)$ if and only if $(K_{\Sigma})^2=10 - \rk
\mathrm{N}^1(\Sigma)$. Therefore, by Theorem \ref{thm criterion}, we have on the one hand that $\operatorname{N}^1(\Sigma)$ is unimodular and hence $K_{\Sigma}^2 = 1$, and on the other hand that $K_\Sigma^2 = 10 - \rk \operatorname{N}^1(\Sigma) = 9$, thus yielding a contradiction. 
Assume now that $S = \Sigma$ (\emph{i.e.}, $S$ is minimal). Then Theorem \ref{thm perling} gives a numerically exceptional collection of maximal length, consisting of $3$ numerical line-bundles, for $S$. We conclude to a contradiction as before.
\end{proof}

\begin{rmk}\label{R:rational}
It should be noted that the converse to Theorem \ref{thm rational} does not
hold\,: a rational surface over a field $k$ need not admit a numerically
exceptional collection of maximal length. Consider for example the rational
surface $X$ defined over a
non-algebraically closed field obtained as the blow-up of the projective plane
$\PP^2$ along  a non-rational Galois-invariant closed point. Then the
N\'eron--Severi lattice of $X$ is not unimodular and, by virtue of Theorem
\ref{thm criterion}, $X$ does not admit a numerically exceptional collection of
maximal length.

In fact, a \emph{minimal} rational surface over a field $k$ need not admit a numerically exceptional collection of maximal length. Indeed, a smooth quadric $Q$ with a rational point is rational, but, if $\operatorname{Pic}(Q) = \mathbb{Z}$, then, by Proposition \ref{P:unimodularity}, $Q$ does not admit a numerically exceptional collection of maximal length since $\rho = b_2 -1$.
\end{rmk}

\subsection{Numerically exceptional collections of maximal length for complex
surfaces} \label{sec cpxsurface}
Let us now turn to geometric consequences. Until the end of this paragraph, the
base field is the field of complex numbers. In Theorem \ref{thm surface general
type}, we determine
exactly which smooth projective complex surfaces with $p_g=q=0$ admit a
numerically exceptional collection of maximal length. \medskip

 Let $S$ be a smooth minimal projective complex surface with $q = p_g =0$. Then,
by the Enriques--Kodaira classification of compact complex surfaces according to
their Kodaira dimension $\kappa$, $S$ is one of the following (see e.g.
\cite{kotschick})\,:

$\bullet \ \kappa = -\infty$, a minimal rational surface.  The minimal rational
surfaces are $\PP^2$
 and the Hirzebruch surfaces $\Sigma_n$, $n =
0,2,3,4,\ldots$, where $\Sigma_n$ is the $\PP^1$-bundle $\PP(O\oplus O(-n))$
over
$\PP^1$. For instance, $\Sigma_0 = \PP^1 \times \PP^1$. Note that $\Sigma_1$ is
not minimal, it is $\PP^2$ blown up once. Note also that $K^2 = 9$ for $\PP^2$
and $K^2 = 8$ for $\Sigma_n$.

$\bullet \ \kappa=0$ or $1$, a minimal properly elliptic surface. Let $X_9$ be the
rational elliptic surface obtained from $\PP^2$
 by blowing up the
nine base points of a generic cubic pencil. Then Dolgachev \cite{dolgachev}
proved that the minimal complex
surfaces with $p_g=q=0$
 of Kodaira dimension $0$ or $1$ are  obtained from $X_9$
by performing logarithmic
transformations on at least two different smooth fibers. We denote the surfaces
obtained in this way by $X_9(p_1,... ,p_n)$ ($p_1 \leq p_2 \leq \ldots \leq
p_n$), where the $p_i$ are the
multiplicities of the
logarithmic transformations, and call them Dolgachev surfaces. (Some authors
reserve
this name for the case when there are only two multiple fibers and their
multiplicities
are relatively prime.)  Now $X_9(2,2)$ is the Enriques surface. It is
the only Dolgachev surface with Kodaira dimension $0$. Note that $K_S^2 =
0$ for all of these surfaces.

$\bullet \ \kappa=2$, a minimal surface of general type.
 For a minimal surface of general type, we have $K_S^2 > 0$, as well as Castelnuovo's inequality $c_2 > 0$. If in
addition
$q = p_g = 0$, then $K_S^2 + c_2 = 12$ by Noether's formula, and $K_S^2 \le 9$.\medskip

In fact, 
unless $S$ has Kodaira dimension $=0$ or $1$, there are no
obstructions to the existence of a numerically exceptional collection of maximal
length\,:

\begin{thm}\label{thm surface general type} Let $S$ be a
	smooth projective complex surface  with
	$p_g=q=0$. 
	\begin{itemize}
		\item If $S$ is not minimal, then $S$ has a numerically exceptional collection
		of maximal length.
	\end{itemize}
	Assume now that $S$ is minimal.
	\begin{itemize}
		\item If $\kappa(S) = -\infty$, then $S$ has a numerically
		exceptional collection of maximal length.
		\item If $\kappa(S) = 0$, then $S$ is an Enriques surface and
		it does not have a
		numerically exceptional collection of maximal length.
		\item If $\kappa(S)=1$, then $S$ is a Dolgachev surface
		$X_9(p_1,\ldots,p_n)$, and $S$ has a numerically exceptional collection of
		maximal length if and only if $S$ is one of $X_9(2,3)$, $X_9(2,4)$, $X_9(3,3)$,
		$X_9(2,2,2)$. 
		\item If  $\kappa(S) = 2$, then $S$ has a numerically
		exceptional collection of maximal length.
	\end{itemize}
\end{thm}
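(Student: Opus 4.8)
The plan is to reduce the statement entirely to the criterion of Theorem \ref{thm criterion}, using Perling's Theorem \ref{thm perling} to bridge ``rank one objects'' and ``arbitrary objects''. For a smooth projective complex surface $S$ with $p_g=q=0$ one has $b_1=0$ and, by the Lefschetz $(1,1)$-theorem (since $H^2(S,O_S)=0$), $\rho=b_2$; hence $\rho=b_2-2b_1$, so the second assertion of Theorem \ref{thm perling} applies and \emph{every} numerically exceptional collection of maximal length on $S$ can be mutated into one consisting of rank one objects, i.e.\ of numerical line-bundles. Combined with Theorem \ref{thm criterion}, this shows that $S$ admits a numerically exceptional collection of maximal length if and only if condition $(iii)$ of Theorem \ref{thm criterion} holds. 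Now Noether's formula gives $K_S^2=12\chi(O_S)-c_2(S)=12-(2+b_2)=10-\rho$ automatically, $\operatorname{N}^1(S)=H^2(S,\Z)/\mathrm{tors}$ is unimodular of signature $(1,\rho-1)$ (Poincar\'e duality and the Hodge index theorem), and $K_S$ is a characteristic element of this lattice by Wu's formula. So it remains, for each class in the Enriques--Kodaira list, to decide whether $(\operatorname{N}^1(S),K_S)$ matches one of the three bullets of $(iii)$.

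I would first settle the cases where the match is automatic. If $S$ is not minimal, choose a $(-1)$-curve $E$: adjunction gives $K_S\cdot E=-1$, so $K_S$ is primitive, and $E^2=-1$ forces $\operatorname{N}^1(S)$ to be odd; being odd, unimodular of signature $(1,\rho-1)$ with $\rho\geq 2$, it equals $\langle 1\rangle\oplus\langle -1\rangle^{\oplus(\rho-1)}$, and the third bullet holds. If $S$ is minimal with $\kappa(S)\in\{-\infty,2\}$, then $1\leq K_S^2\leq 9$: for $\kappa(S)=-\infty$, $S=\PP^2$ or $S$ is a Hirzebruch surface $\Sigma_n$, so $K_S^2\in\{8,9\}$; for $\kappa(S)=2$, $K_S^2\geq 1$ (general type) and $K_S^2\leq 9$ by Noether together with the Bogomolov--Miyaoka--Yau inequality $K_S^2\leq 3c_2$. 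The following elementary fact then finishes the job: if $\Lambda$ is a unimodular lattice of signature $(1,\rho-1)$ with $1\leq\rho\leq 9$, and $\kappa\in\Lambda$ is characteristic with $\kappa^2=10-\rho$, then $(\Lambda,\kappa)$ satisfies $(iii)$. Indeed, for $\rho=1$ one has $\Lambda=\langle 1\rangle$ and $\kappa$ is an odd multiple of the generator with $\kappa^2=9$, so $\kappa=\pm 3D$; if $\Lambda$ is even it must be $U$ (the unique even unimodular lattice of signature $(1,1)$), forcing $\rho=2$, and then $\kappa\in 2\Lambda$ with $\kappa^2=8$, so $\kappa=2D$ with $D$ primitive; if $\Lambda$ is odd of rank $\geq 2$, write $\kappa=mw$ with $m\geq 1$ --- the coordinates of $\kappa$ in an orthogonal basis are all odd, so $m$ and $w$ have odd coordinates, and $m^2w^2=\kappa^2=10-\rho>0$ forces $w^2\geq 1$ and hence $m^2\leq 8$, i.e.\ $m=1$ and $\kappa$ primitive. (For $\kappa(S)=-\infty$ one may alternatively invoke the classical full exceptional collections of line-bundles on $\PP^2$ and the $\Sigma_n$, due to Beilinson and Orlov.)

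The remaining minimal cases have $K_S^2=0$, i.e.\ $\rho=b_2=10$, so $\operatorname{N}^1(S)$ is unimodular of signature $(1,9)$, hence either the even Enriques lattice $U\oplus E_8(-1)$ or the odd lattice $\langle 1\rangle\oplus\langle -1\rangle^{\oplus 9}$; in the even case $(iii)$ fails outright, its only even option being the rank two lattice $U$. If $\kappa(S)=0$ then $S$ is an Enriques surface, $\operatorname{N}^1(S)\cong U\oplus E_8(-1)$, so $S$ does not admit a numerically exceptional collection of maximal length. If $\kappa(S)=1$ then $S=X_9(p_1,\dots,p_n)$; by the canonical bundle formula for a relatively minimal elliptic surface over $\PP^1$ with $\chi(O_S)=1$, $K_S$ is numerically equivalent to $\big(n-1-\sum_{i=1}^n 1/p_i\big)F$, where $F$ is the class of a general fibre. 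Writing $F=\ell F_0$ in $\operatorname{N}^1(S)$ with $F_0$ primitive, where $\ell=\operatorname{lcm}(p_1,\dots,p_n)$ is the divisibility of the fibre class (a standard fact about Dolgachev surfaces), one gets $K_S\equiv c F_0$ with $c=\ell(n-1)-\sum_{i=1}^n \ell/p_i\in\Z$, and $c>0$ precisely because $\kappa(S)=1$. Hence the divisibility of $K_S$ in $\operatorname{N}^1(S)$ equals $c$: if $c$ is odd then, being characteristic, $K_S\notin 2\operatorname{N}^1(S)$, so $\operatorname{N}^1(S)$ is the odd lattice and $(iii)$ holds if and only if $c=1$; if $c$ is even, $K_S$ is not primitive and $(iii)$ fails in either case. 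So $S$ admits a numerically exceptional collection of maximal length if and only if $c=1$, and a direct enumeration over $p_1\leq\dots\leq p_n$, $p_i\geq 2$, $n\geq 2$ (solutions occurring only for $n\leq 3$) leaves exactly $(2,3)$, $(2,4)$, $(3,3)$, $(2,2,2)$.

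The main obstacle is the case $\kappa(S)=1$: it rests on the explicit canonical bundle formula, on the fact that the fibre class of a Dolgachev surface $X_9(p_1,\dots,p_n)$ has divisibility $\operatorname{lcm}(p_i)$ in $\operatorname{N}^1(S)$, on the parity dichotomy linking ``$c$ odd'' to ``$\operatorname{N}^1(S)$ odd'', and on the finite check solving $c=1$. The lattice fact used for $\kappa(S)\in\{-\infty,2\}$ and for non-minimal $S$ is elementary but must be stated with care, and one should also be careful to justify the bound $1\leq K_S^2\leq 9$ for minimal surfaces of general type with $p_g=q=0$.
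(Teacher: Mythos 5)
Your overall route is the same as the paper's: use Perling's Theorem \ref{thm perling} together with $\rho=b_2$ (Lefschetz $(1,1)$) to reduce the existence of a numerically exceptional collection of maximal length to condition $(iii)$ of Theorem \ref{thm criterion}, and then run through the Enriques--Kodaira list. Where you genuinely improve on the exposition is the treatment of the cases with $K_S^2>0$ (non-minimal, $\kappa=-\infty$, $\kappa=2$): your single lattice lemma --- a characteristic vector of norm $10-\rho$ in a unimodular lattice of signature $(1,\rho-1)$ with $1\leq\rho\leq 9$ automatically lands in one of the three bullets of $(iii)$ --- is correct (the even case forces $\rho=2$ and $\Lambda\cong U$ with $\omega\in 2\Lambda$; in the odd case of rank $\geq 2$, writing $\omega=mw$ with $w$ primitive, all coordinates of $\omega$ in an orthogonal basis are odd, so $m$ is odd, and $m^2\leq m^2w^2=10-\rho\leq 8$ forces $m=1$). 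This replaces the paper's more piecemeal argument via Lemma \ref{lem even-odd} and the discussion of when $K_S=3D$. Your Enriques argument (even lattice of rank $10$ cannot match $(iii)$) is also fine, and is a minor variant of the paper's observation that $K_S$ is numerically trivial there.

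The one genuine gap is in the Dolgachev case $\kappa(S)=1$, and it sits exactly at the hardest point of the paper's proof. You assert as ``a standard fact about Dolgachev surfaces'' that the fibre class $F$ has divisibility exactly $\operatorname{lcm}(p_1,\ldots,p_n)$ in $\operatorname{N}^1(S)$, i.e.\ that $F_0=F/\operatorname{lcm}(p_i)$ is primitive; everything downstream (the identification of the divisibility of $K_S$ with $c$, the parity dichotomy, and the enumeration $c=1$) depends on this. That the subgroup generated by $F_1,\ldots,F_n$ contains $F_0$ is elementary arithmetic, but the claim that $F_0$ is not further divisible in the full rank-$10$ lattice is a real geometric statement. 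It is standard only for $n=2$ with coprime multiplicities (simply connected Dolgachev surfaces); for general $n$ and non-coprime $p_i$ it requires proof, and the paper spends roughly a page on exactly this point: for any integral class $E$ that is a rational multiple of $F$, Riemann--Roch gives $\chi(O_S(E))=1$, so $E$ or $K_S-E$ is effective, and an effective divisor $D$ with $D\cdot F=0$ is a non-negative integral combination of $F$ and the $F_i$ (all other fibres being irreducible), which forces $E\in\frac{1}{\operatorname{lcm}(p_i)}\Z F$. Your fact is true and is provable by precisely this argument, but as written your proposal replaces the crux of the proof by an unsubstantiated citation. To close the gap you should either supply this effectivity argument or give a precise reference covering arbitrary $n$ and arbitrary multiplicities.
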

\begin{proof} First note that under the condition $p_g=q=0$, we have $\chi(O_S)=1$, $b_2 = \rho$, and the intersection pairing on $\operatorname{N}^1(S)$ is unimodular. Indeed, the first Chern
class  induces an isomorphism $\mathrm{Pic}(S) \cong \HH^2(S,\Z)$, so that
$\mathrm{N}^1(S)
\cong
\HH^2(S,\Z)/torsion$\,;  furthermore, by Poincar\'e duality, it follows that the intersection
pairing on
$\mathrm{N}^1(S)$ is unimodular. 

If $S$ is not minimal, that is if $S$
is the blow-up of a smooth projective surface, then $\mathrm{N}^1(S)$ is
odd of rank $\geq 2$ and $K_S$ is clearly primitive. Hence, by Theorem \ref{thm
criterion}, if $S$ is not minimal, then $S$ admits a numerically exceptional
collection of line-bundles of
maximal length. 

From now on, we assume that $S$ is a minimal surface. By Theorem \ref{thm perling} and Theorem \ref{thm criterion}, note that, since $b_2 = \rho$, $S$ has a numerically exceptional collection of maximal length if and only if it has one consisting of line-bundles.
\medskip

$\bullet$ If $\kappa = -\infty$, then in fact $S$ has a full exceptional
collection of
line-bundles\,: if $S=\PP^2$, then the Beilinson collection $\langle
O_S,O_S(1),O_S(2)\rangle$
is full exceptional\,; if $S=\Sigma_n$, $n=0,2,4,\ldots$, is a Hirzebruch surface,
then denoting respectively $F$ and $C$ a fiber and a section  of the
corresponding $\PP^1$-bundle (so that $F^2=0, F\cdot C=1$ and $C^2=-n$), we have
that
 $\langle O_S,O_S(F),O_S(C),O_S(C+F)\rangle$
  is a full exceptional collection (this is essentially contained in
\cite{orlov}).
\medskip

$\bullet$ If $\kappa = 2$, then we know that $K_S^2 \in \{1,2,\ldots,9\}$.
Thus if
$K_S=rD$ for some positive integer $r$ and some primitive divisor $D$, then $r
\in \{1,2,3\}$.
Also, by Noether's formula, the N\'eron--Severi lattice $\mathrm{N}^1(S)$ has
rank $\in \{1,2,\ldots,9\}$, and, by the classification of unimodular lattices
of signature $(1,n-1)$, we see that $\mathrm{N}^1(S)$ is even if and
only if it is isomorphic to the hyperbolic plane $U$. Now, we have
\begin{lem} \label{lem even-odd}
Let $S$ be a complex surface with $p_g=q=0$. Then, the N\'eron--Severi lattice
$\mathrm{N}^1(S)$ is even if and only if $K_S = 2D$ for some 
divisor $D\in \mathrm{N}^1(S)$.
\end{lem}
\begin{proof}[Proof of the lemma]
Recall that $K_S$ is a characteristic element in $\operatorname{N}^1(S)$, that is, 
$E\cdot(E-K_S)$ is even for all $E \in \mathrm{N}^1(S)$ (this goes by the name of Wu's formula\,; it follows from the Riemann--Roch formula \eqref{RR} whereby $E\cdot(E-K_S) = 2(\chi(O_S(E)) - \chi(O_S))$).
Therefore $E^2$ is even for
all $E \in \mathrm{N}^1(S)$ if and only if $K_S \cdot E$ is even for all $E \in
\mathrm{N}^1(S)$. Thus, if $K_S = 2D$, then $\mathrm{N}^1(S)$ is even.
Conversely, the pairing on $\mathrm{N}^1(S)$ is
unimodular by Poincar\'e duality, and hence induces an isomorphism from $\mathrm{N}^1(S)$ to its dual. Since $K_S$ is characteristic and $\mathrm{N}^1(S)$ is assumed to be even, the element $K_S \in \mathrm{N}^1(S)$ is mapped to $2w$, for some $w \in \mathrm{N}^1(S)^\vee$, under this isomorphism. It is then apparent that $K_S = 2D$ for some 
divisor $D\in \mathrm{N}^1(S)$.
\end{proof} 
By Lemma \ref{lem even-odd}, if the intersection pairing on $\mathrm{N}^1(S)$ is
odd, then $K_S$
is either primitive or equal to $3D$ for some primitive divisor $D$. In order to
conclude, we need to show that $K_S=3D$ if and only if $n=1$, which is further
equivalent by Noether's formula to $K_S^2=9$. Clearly if $K_S=3D$, then
$K_S^2=9D^2$ so that $K_S^2$ must be equal to $9$. If now $n=1$, then
$\mathrm{N}^1(S) = \Z H$ for some divisor $H$, which by Poincar\'e
duality satisfies $H^2 = 1$. The canonical divisor $K_S$ is then equal to $aH$
for some integer $a$. Since $K_S^2=9$, we find that $a=\pm 3$ and we are done.
By Theorem \ref{thm criterion}, we deduce that every minimal smooth projective
complex surface of general type with $p_g=q=0$ admits a numerically exceptional
collection of maximal length. 
\medskip

$\bullet$ If $\kappa=0$ or $1$, then $S$ is a Dolgachev surface. The N\'eron--Severi lattice of a Dolgachev
surface $S$
has rank 10, so that by Theorem \ref{thm criterion} (combined with Lemma
\ref{lem even-odd}) $S$ admits a numerically
exceptional collection  of maximal length if and only if $K_S$ is
primitive. 

Let us first consider the case $\kappa = 0$, that is, the case where $S$ is a classical Enriques surface. It is known that the canonical sheaf $\omega_S$ is $2$-torsion, and hence that $K_S = 0$ in $\mathrm{N}^1(S)$.  Therefore,  $S$ does not admit a numerically exceptional
collection of maximal length. (Recall also that the N\'eron--Severi lattice of
an Enriques surface is $U\oplus E_8(-1)$\,; it is even of rank $10$.)

Consider now a Dolgachev surface $S=X_9(p_1,\ldots,p_n)$ of Kodaira dimension $1$ (\emph{i.e.}, which is not $X_9(2,2)$). Its canonical
divisor $K_S$ is not torsion and is given by \cite[p. 129]{dolgachev} 
$$K_S = (n-1)F - \sum_{i=1}^n F_i \
\in \Pic(S),$$
where $F$ is the class of a general fiber and $F_i$ is the class of the multiple
fiber corresponding to $p_i$ (in particular, $p_iF_i = F \in \Pic(S)$). The
canonical divisor $K_S$
 may or may not be primitive.
First we show by elementary arithmetic that  if $S$ is not one of $X_9(2,3)$,
$X_9(2,4)$, $X_9(3,3)$, $X_9(2,2,2)$, then $K_S$ is not primitive. Let us assume
that $S$ is $X_9(p_1,\ldots,p_n)$ with $2\leq p_1\leq p_2 \leq \ldots \leq p_n$
and with distinct multiple fibers $F_i$ such that $p_iF_i =F$. Let us write $c:= \gcd(p_1,p_2)$, $d:=
\frac{p_1p_2}{\gcd(p_1,p_2)}$, and let $u$ and $v$ be integers such that
$up_1+vp_2=c$. Consider $G := vF_1 + uF_2 \in \mathrm{Pic}(S)$\,; then
note that $$dG =  \frac{vp_1p_2}{c}F_1 +
\frac{up_1p_2}{c}F_2 = \frac{vp_2}{c}F +
\frac{up_1}{c}F = F.$$
Note also
that, in the N\'eron--Severi lattice $\mathrm{N}^1(S)$, the fibers $F$, $F_1, F_2$ are integral multiples of $G$ (namely, $F=dG$, $F_1 = q_2G$ and $F_2 = q_1G$, where $p_1 = c q_1$ and $p_2 = c q_2$) and the fibers $F_3,
\ldots, F_n$ are all rational multiples of $G$.\medskip

If $n=2$, we claim that $K_S$ is primitive only if $(p_1,p_2)$ is one of $(2,3)$, $(2,4)$ or $(3,3)$. 
In that case, we have
\begin{equation}\label{eq KG}
K_S = F - F_1 - F_2 = (d-q_2-q_1)G =(c q_1q_2 - q_1 - q_2)G.
\end{equation}
Note that $c q_1q_2 - q_1 - q_2 = (c-1)q_1q_2 + (q_1 - 1)(q_2 - 1) - 1$.
Assume this is equal to $1$.
If $q_1 = 1$ then $(c-1)q_2 = 2$, hence $c = 2, q_2 = 2$\,; or $c = 3, q_2 = 1$.
The first gives $(2,4)$, the second gives $(3,3)$.
If $q_1,q_2 \ge 2$, then the second summand is $\ge 1$, hence the first is $\le 1$, hence $c = 1$,
hence $(q_1 - 1)(q_2 - 1) = 2$. This gives $(2,3)$.\medskip

If $n>2$, then $K_S$ is primitive only if $n=3$ and $(p_1,p_2,p_3) = (2,2,2)$. Indeed, we have  
\begin{align*}
K_S &= (2F-F_1-F_2) + (F-F_3) +
\cdots (F-F_{n-1}) - F_n\\
&= (2d-q_1-q_2)G + (F-F_3) +
\cdots (F-F_{n-1}) - F_n.
\end{align*}
On the one hand, we have (note that $q_1\le q_2$) $$2d-q_1-q_2 = 2p_1q_2 - q_1  - q_2 = (2p_1-1)q_2 -q_1 \geq 3q_2-q_1 \geq 2q_2$$ with equality if and only if $p_1=p_2=2$.
On the other hand, each divisor $F-F_i$ is a positive rational multiple of $G$. Together with the inequality $(2d-q_1-q_2) \geq 2q_2 \ge 2$, it
follows that $$K_S \geq 2G - F_{n},$$ and equality holds only if $n=3$ and $p_1=p_2=2$.
If we can write $K_S = \lambda G$ in $\mathrm{N}^1(S)$
for some rational
number $\lambda>1$, then $K_S$ cannot be primitive.
We deduce that $K_S$ can only be primitive when $n=3$,
$p_1=p_2=2$ and $F_3 = G$, that is, when $S = X_9(2,2,2)$.
\medskip
 
Finally we check that $K_S$ is primitive for the Dolgachev surfaces $X_9(2,3)$,
$X_9(2,4)$, $X_9(3,3)$ and $X_9(2,2,2)$. In the first three cases, by
\eqref{eq KG}, we have  $K_S=G$. More generally, when $n=2$, we claim that the divisor $G$
is primitive. We proceed as in \cite[p. 384]{bhp}, by contradiction. If, for
some rational number $0<\lambda <1$, the class $\lambda G$ is represented by a
divisor, then by the Riemann-Roch formula, either $\lambda G$ or $K_S-\lambda G$
is effective. Note then that an effective divisor $D$ such that $\deg(D\cdot F)
= 0$ is linearly equivalent to $a_1F_1 + a_2F_2$ for some non-negative integers
$a_1$ and $a_2$. Since clearly
 $\lambda G$ is not effective, $K_S-\lambda G$
must be effective, that is, we can write $K_S-\lambda G = a_1F_1+a_2F_2$ for
some  non-negative integers $a_1$ and $a_2$.
But then we obtain $$(d-q_1-q_2 -\lambda )  G = a_1F_1 + a_2F_2 =
(a_1q_2+a_2q_1)G$$ and hence we find that
$\lambda$ is an integer, which gives a contradiction. 

In the last case ($S=X_9(2,2,2)$), we have $K_S = 2F-F_1-F_2-F_3 = F_1+F_2-F_3$.
Again, if, for some rational number  $0<\lambda <1$, the class $\lambda K_S$ is
represented by a divisor, then by the Riemann--Roch formula, either $\lambda
K_S$ or $(1-\lambda)K_S$ is effective. Assume that  $\mu K_S$ is effective for
some $0<\mu < 1$. Then there exist non-negative integers $a_1,a_2,a_3$ such that
 $\mu K_S = a_1F_1+a_2F_2+a_3F_3$. Since $F_1$, $F_2$ and $F_3$ are numerically
equivalent, we find that $\mu = a_1 + a_2 + a_3$, in particular we find that
$\mu$ is an integer.
Theorem \ref{thm surface general type}  is now
proved. 
\end{proof}

\begin{rmk}\label{R:blowup}
Although an Enriques surface does not admit an exceptional collection of maximal
length, it would be very interesting to decide whether or not an Enriques
surface blown up at a point admits an exceptional collection of maximal
length. This would give an example of a triangulated category with an
exceptional collection of maximal length that admits an exceptional object whose
orthogonal complement does not admit an exceptional collection of maximal
length. Indeed, denoting $p : \tilde{S} \rightarrow S$ the blow-up of $S$ along
a point $P$ and denoting $E$ the exceptional divisor, we have by Orlov's blow-up formula a semi-orthogonal decomposition
$D^b(\tilde{S}) \cong \langle O_E(-1),p^*D^b(S)\rangle$. Then the right-orthogonal complement of the exceptional object $O_E(-1)$ in $D^b(\tilde{S})$ does not admit an exceptional collection of maximal length by Theorem
\ref{thm criterion}. This is related to the Jordan--H\"older property for
derived categories\,; \emph{cf.} \cite{kuz}.

It would also be interesting to exhibit  exceptional collections of maximal
length for the Dolgachev surfaces $X_9(2,3)$, $X_9(2,4)$, $X_9(3,3)$ and
$X_9(2,2,2)$. The orthogonal of such collections would yield new examples of
quasi-phantom categories (triangulated categories with torsion $K_0$) in the
case of the Dolgachev surfaces $X_9(2,4)$, $X_9(3,3)$ and
$X_9(2,2,2)$. In the case of $X_9(2,3)$, it would yield (if one believes in
Orlov's conjecture) a new example of phantom category (a non-zero triangulated
category with vanishing $K_0$). 

\emph{N.B.} Cho and Lee \cite{cholee} have recently constructed exceptional collections on some Dolgachev surfaces of type $X_9(2,3)$ of maximal length whose orthogonal complements provide examples of phantom categories.
\end{rmk}


\appendix

\section{On trigonal unimodular lattices}

The main result is Theorem \ref{T:mainappendix}. The equivalence $(i) \Leftrightarrow (ii)$ therein reduces the equivalence $(i) \Leftrightarrow (iii)$ of Theorem \ref{thm criterion} to a purely linear algebraic statement.\medskip

We refer to \cite{hm} for the basics of lattice theory.
A lattice $(\Lambda,b)$
is a free $\Z$-module $\Lambda$ of finite rank equipped with a symmetric bilinear form
$b : \Lambda \times \Lambda \rightarrow \Z$. The \emph{norm} of a vector $x\in \Lambda$ is $ b(x,x) \in \Z$. 
We denote $\langle a \rangle$ the rank-one lattice $\Lambda = \Z \lambda$ such that $b(\lambda,\lambda) = a$. The \emph{signature} of a lattice $(\Lambda, b)$ is $(n^+,n^-,n^0)$ if $\Lambda \otimes_\Z \R$ splits as the orthogonal sum $\langle 1 \rangle^{\oplus n^+} \oplus \langle -1 \rangle^{\oplus n^-} \oplus \langle 0 \rangle^{\oplus n^0}$. If $n^0 = 0$, that is, if $\Lambda$ is non-degenerate, we will omit the term $n^0$ from the signature of $\Lambda$. 

A lattice is said to be \emph{even} if
the norm of every vector is even\,; it is said to be \emph{odd} otherwise. A
lattice is said to be \emph{unimodular} if the determinant of its bilinear form
(expressed in any $\Z$-basis) is equal to $\pm 1$. 
Let us denote $U$ the hyperbolic lattice, that is the lattice $\Z e_1 \oplus \Z e_2$ with $b(e_1,e_2)=1$ and $b(e_i,e_i)=0$ for $i=1,2$\,; it is up to isomorphism the only even unimodular lattice of rank $2$.

\begin{defn}\label{D:trigonal}
We will say that a lattice $(\Lambda,b)$ of rank $n$ is \emph{trigonal} if there exists a $\Z$-basis $(e_1,\ldots, e_n)$ of $\Lambda$ such that the matrix of $b$ expressed in that basis has the trigonal form 

\begin{equation} \label{eq matrix}
M := \operatorname{Mat}_{(e_1,\ldots,e_n)}(b) =  \left( \begin{array}{cccccc}
a_1 & 1 & & & &  \\
1 & a_2 & 1 & & &  \\
& 1 & a_3 & \ddots & &\\
& & \ddots & \ddots & 1\\
& & &  1 & a_n \end{array} \right)
\end{equation}
where the entries outside the $3$ diagonals consist solely of zeroes.
For simplicity, we will write $$M = [a_1,a_2,\ldots,a_n]$$ and sometimes $M = \operatorname{trig}(a_1,a_2,\ldots,a_n)$, for clarity.
A $\Z$-basis in which $b$ takes a trigonal form will be called a \emph{trigonal basis} for $b$.
Such a trigonal reduction is fairly special for unimodular matrices\,; see Remark \ref{rmk K2}, but also
\cite{newman} where it is shown that for any unimodular  bilinear form over $\mathbb Z$ there is a basis in which its matrix takes the form \eqref{eq matrix} with the $(n-1,n)$ and $(n,n-1)$ entries replaced by some positive integer $d$ .

\end{defn}

\begin{defn} \label{D:characteristic}
	Recall that an element $\omega$ in a lattice $(\Lambda,b)$ is said to be \emph{characteristic} if $b(\omega,\lambda) = b(\lambda,\lambda)$ (mod $2$) for all $\lambda \in \Lambda$. 
	We will say that a characteristic element $\omega$ in a trigonal lattice $(\Lambda,b)$ is  \emph{special} if there exists a trigonal basis $(e_1,\ldots,e_n)$ of $(\Lambda,b)$ such that $$b(\omega,e_i) = -b(e_i,e_i)-2, \quad \mbox{for all} \ 1\leq i \leq n.$$
	(Note that such a special characteristic element always exists if the trigonal lattice $\Lambda$ is unimodular.)
\end{defn}

First we characterize trigonal unimodular lattices  (endowed with a special characteristic element)\,:

\begin{prop} \label{P:unimodular} Let $(\Lambda,b)$ be a lattice of rank $n$ and signature $(n^+,n^-,n^0)$. The following statements are equivalent.
	\begin{enumerate}[(i)]
		\item $(\Lambda,b)$ is trigonal and unimodular.
			\item There exist $\lambda_1, \ldots, \lambda_{n+1}$ in $\Lambda$ such that 
			$\big(b(\lambda_i,\lambda_j)\big)_{1\leq i,j \leq n+1}$ is a trigonal matrix.
		\item There exist $\lambda_0, \lambda_1, \ldots, \lambda_{n+1}$ in $\Lambda$ such that 
		$$\big(b(\lambda_i,\lambda_j)\big)_{0\leq i,j \leq n+1} =  \left( \begin{array}{cccccc}
		a_0 & 1 & & &  (-1)^{n^+-1}  \\
		1 & a_1 & 1 & & &  \\
		& 1 & a_2 & \ddots & &\\
		& & \ddots & \ddots & 1\\
		(-1)^{n^+-1} & & &  1 & a_{n+1} \end{array} \right).$$
	
	\end{enumerate}
Moreover, assuming $(\Lambda,b)$ is trigonal and unimodular, a characteristic element $\omega \in \Lambda$ is special if and only if there exist $\lambda_0, \lambda_1, \ldots, \lambda_{n+1}$ in $\Lambda$ as in $(iii)$ with the additional property that $\omega = -\sum_{i=0}^{n+1} \lambda_i$.
\end{prop}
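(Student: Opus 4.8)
The plan is to prove the cycle of implications $(i)\Rightarrow(iii)\Rightarrow(ii)\Rightarrow(i)$, and then to read the statement on special characteristic elements off the construction used for $(i)\Rightarrow(iii)$. The implication $(iii)\Rightarrow(ii)$ is immediate, since deleting the row and column indexed by $0$ from the matrix displayed in $(iii)$ leaves a trigonal matrix of size $n+1$.

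The heart of the argument is $(ii)\Rightarrow(i)$. Suppose $\lambda_1,\dots,\lambda_{n+1}$ have trigonal Gram matrix $N=\operatorname{trig}(c_1,\dots,c_{n+1})$. Since $\operatorname{rk}\Lambda=n$, the vectors $\lambda_i$ are linearly dependent over $\Q$, so $\det N=0$. The essential observation is that the leading principal minors $D_k:=\det\big(b(\lambda_i,\lambda_j)\big)_{1\le i,j\le k}$ of a trigonal matrix obey the continuant recursion $D_k=c_kD_{k-1}-D_{k-2}$ with $D_0=1$, so that $\gcd(D_k,D_{k-1})=\gcd(D_{k-1},D_{k-2})=\cdots=\gcd(c_1,1)=1$ for every $k$; applied with $D_{n+1}=0$ this forces $|D_n|=1$. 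Hence $\lambda_1,\dots,\lambda_n$ span a rank-$n$ unimodular sublattice $\Lambda_0\subseteq\Lambda$; a unimodular lattice is nondegenerate, so $\Lambda$ is too (a radical vector of $\Lambda$, scaled into $\Lambda_0$, would be a radical vector of $\Lambda_0$), and the relation between the determinant of $\Lambda$ and that of a finite-index sublattice then forces $[\Lambda:\Lambda_0]=1$. So $(\lambda_1,\dots,\lambda_n)$ is a trigonal basis of the unimodular lattice $\Lambda$, giving $(i)$; in particular $n^0=0$.

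For $(i)\Rightarrow(iii)$, fix a trigonal basis $(e_1,\dots,e_n)$ with Gram matrix $M=\operatorname{trig}(a_1,\dots,a_n)$. Unimodularity furnishes the dual basis $e_1^\vee,\dots,e_n^\vee\in\Lambda$, determined by $b(e_i^\vee,e_j)=\delta_{ij}$, for which $b(e_i^\vee,e_j^\vee)=(M^{-1})_{ij}$. I would take $\lambda_i:=e_i$ for $1\le i\le n$, together with $\lambda_{n+1}:=e_n^\vee$ and $\lambda_0:=e_1^\vee$; a direct inspection then shows the Gram matrix of $(\lambda_0,\dots,\lambda_{n+1})$ has exactly the cyclic trigonal shape of $(iii)$, the single entry requiring justification being the corner $b(\lambda_0,\lambda_{n+1})=(M^{-1})_{1n}$. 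A short cofactor computation handles this: deleting the $n$-th row and first column of $M$ leaves a lower-triangular matrix with $1$'s on the diagonal, so the corresponding minor equals $1$, and since $\det M=(-1)^{n^-}$ by Sylvester's law of inertia (using $n^0=0$) one gets $(M^{-1})_{1n}=(-1)^{n+1}\det(M)^{-1}=(-1)^{n^+-1}$. (In low rank, e.g.\ when $n=1$ so that $\lambda_0$ and $\lambda_{n+1}$ coincide, this specializes without incident.)

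For the final assertion: if $\omega=-\sum_{i=0}^{n+1}\lambda_i$ with the $\lambda_i$ as in $(iii)$, then by $(ii)\Rightarrow(i)$ the tuple $(\lambda_1,\dots,\lambda_n)$ is a trigonal basis, and since the sum of the $i$-th column of the matrix in $(iii)$ equals $a_i+2$ for every $1\le i\le n$, we find $b(\omega,\lambda_i)=-(a_i+2)=-b(\lambda_i,\lambda_i)-2$, i.e.\ $\omega$ is special. Conversely, if $\omega$ is special with respect to some trigonal basis $(e_1,\dots,e_n)$, build $\lambda_0,\dots,\lambda_{n+1}$ from $(e_i)$ as in $(i)\Rightarrow(iii)$; the column-sum computation just made shows that $-\sum_i\lambda_i$ obeys the very same system $b(\,\cdot\,,e_i)=-b(e_i,e_i)-2$ as $\omega$, and an element of a unimodular lattice is pinned down by its pairings against a basis, so $\omega=-\sum_{i=0}^{n+1}\lambda_i$. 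I expect the most delicate point to be the sign bookkeeping in $(i)\Rightarrow(iii)$ — forcing the corner entry to be exactly $(-1)^{n^+-1}$ through the interplay of the cofactor formula with Sylvester's law, and checking the degenerate small-rank cases — whereas the conceptually load-bearing step is the continuant/coprimality trick underlying $(ii)\Rightarrow(i)$.
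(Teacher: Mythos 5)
Your proposal is correct and follows essentially the same route as the paper: the same dual-basis construction $\lambda_0=e_1^\vee$, $\lambda_{n+1}=e_n^\vee$ with the identical cofactor/Sylvester sign computation for the corner entry $(-1)^{n^+-1}$, the same continuant recursion $D_k=c_kD_{k-1}-D_{k-2}$ with the gcd trick forcing $|D_n|=1$ for $(ii)\Rightarrow(i)$, and the same column-sum/duality argument for the characterization of special characteristic elements. Your explicit justification that the unimodular sublattice $\Lambda_0$ has index one in $\Lambda$ is a point the paper leaves implicit, but it is the standard argument and does not change the approach.
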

\begin{proof} $(i) \Rightarrow (iii)$\,: Let $(e_1,\ldots, e_n)$ be a trigonal basis for $(\Lambda,b)$. Since $(\Lambda,b)$ is assumed to be unimodular, $b$ identifies naturally $\Lambda$ with its dual. We define $e_0$ (resp. $e_{n+1}$) to be the dual of $e_1$ (resp. $e_n$), that is, $e_0$ (resp. $e_{n+1}$) is the element of $\Lambda$ such that $b(e_0,e_i) = 1$ if $i=1$ and $0$ otherwise (resp. such that $b(e_0,e_i) = 1$ if $i=n$ and $0$ otherwise).  We claim that the matrix $\big(b(e_i,e_j)\big)_{0\leq i,j \leq n+1}$ is as in $(iii)$. It is enough to check that $b(e_0,e_{n+1}) = (-1)^{n^+-1}$. We have $b(e_0,e_{n+1}) = b^{-1}(e_1,e_n)$, where $b^{-1}$ is the symmetric bilinear form on $\Lambda$ whose matrix expressed in the basis $(e_1,\ldots, e_n)$ is the inverse of  $M := \big(b(e_i,e_j)\big)_{1\leq i,j \leq n}$. Thus, denoting $m_{1,n}$ the $(1,n)^{\mathrm{th}}$ minor of $M$ (that is,  the determinant of the submatrix formed by deleting the $1^{\mathrm{st}}$ row and $n^{\mathrm{th}}$ column of $M$), we have  $$b(e_0,e_{n+1}) = (-1)^{n+1} (\det M)^{-1} m_{1,n} = (-1)^{n+1}(-1)^{n^-} = (-1)^{n^+-1},$$
where in the second equality we have used that $\det M = (-1)^{n^-}$ and $	m_{1,n} = 1$.
	\medskip
	
	$(iii) \Rightarrow (ii)$\,: This is obvious. \medskip
	
$(ii) \Rightarrow (i)$\,: We are going to show  that the determinant of $\big(b(\lambda_i,\lambda_j)\big)_{1\leq i,j \leq n} = [a_1,\ldots, a_n]$ is equal to $(-1)^{n^-}$. Since $\Lambda$ has rank $n$, this will prove that $(\Lambda,b)$ is unimodular and that $(\lambda_1,\ldots, \lambda_n)$ provides a trigonal basis of $\Lambda$.
Let us consider, for $m\leq n+1$, the $(m\times m)$-trigonal matrix
	$ [a_1,\ldots, a_m],$
	and let us denote $$d_m := \det  \big(	\operatorname{trig}(a_1,\ldots, a_m)\big).$$
	It is easy to see that $d_m$ satisfies the Fibonacci type recurrence relation $$d_m = a_m d_{m-1} - d_{m-2}, \quad \mbox{for all } m>1,$$ with $d_0 = 1$. Note that since $\rk \Lambda=n$, we have $d_{n+1}=0$.
	From this formula, we derive two things\,: first
	that $d_n \neq 0$ (otherwise $d_m$ would vanish for all $m\leq n+1$,
	but $d_0=1$)\,; second that $\gcd(d_{m},d_{m-1}) =
	\gcd(d_{m-1},d_{m-2})$ for all $2 \leq m \leq n+1$. Because $d_0 =1$
	and $d_{n+1}=0$, we see that $d_n=\pm 1$ (and in fact, since the signature is then $(n^+,n^-,0)$, $d_n = (-1)^{n^-}$). 
	\medskip

	Let us now assume that there are $\lambda_0, \lambda_1, \ldots, \lambda_{n+1}$ as in $(iii)$, with $\omega = -\sum_{i=0}^{n+1} \lambda_i$. The proof of $(iii) \Rightarrow (i)$ shows that in fact $(\lambda_1, \ldots, \lambda_{n})$ is a trigonal basis of $\Lambda$. Furthermore, $b(\omega,\lambda_i) = -b(\lambda_i,\lambda_i) - 2$ for all $1\leq i \leq n$. Therefore $\omega$ is a special characteristic element in $\Lambda$. 
	
	Conversely, pick a trigonal basis $(e_1,\ldots, e_n)$ of the unimodular lattice $(\Lambda,b)$ such that $b(\omega,e_i) = -b(e_i,e_i) - 2$ for all $1\leq i \leq n$. 
	 We note that $ \omega + e_1 + \cdots + e_n$ identifies with the dual of $-(e_1 + e_n)$ with respect to the unimodular symmetric bilinear form $b$. Therefore, for $e_0$ and $e_n$ as defined in the proof of $(i) \Rightarrow (iii)$, $\omega = -(e_0 + \cdots +e_n)$.
	 This concludes the proof of the proposition.
\end{proof}

We now state our main result\,; it gives a characterization of pairs consisting of a trigonal unimodular lattice of signature $(1,n-1)$ endowed with a special characteristic element.

\begin{thm} \label{T:mainappendix}
	Let $(\Lambda,b)$ be a  lattice of signature $(1,n-1)$ and let $\omega$ be a vector in $\Lambda$. The following statements are equivalent\,:
	\begin{enumerate}[(i)]
		\item $(\Lambda,b)$ is unimodular and trigonal, and $\omega$ is a special characteristic element\,; \vspace{4pt}
		\item The vector $\omega$ is characteristic of norm $b(\omega,\omega)=10-n$ and the pair $(\Lambda,\omega)$ satisfies one of the following properties\,:
		\begin{itemize}
			\item $\Lambda \cong \langle 1 \rangle$ and $\omega = 3\lambda$ for some
			primitive vector $\lambda$\,;
			\item   $\Lambda \cong U$ and $\omega = 2\lambda$ for some
			primitive vector $\lambda$\,;
			\item  $\Lambda \cong \langle 1 \rangle \oplus \langle -1
			\rangle^{\oplus n}$ with $n>0$ and $\omega$ is
			primitive.
		\end{itemize}\vspace{4pt}
		\item There exists a $\Z$-basis $(e_1,\ldots,e_n)$ of $\Lambda$  such that\,: 
		\begin{itemize}
			\item $\operatorname{Mat}_{(e_i)}(b) = \operatorname{diag}(1,-1,\ldots,-1)$ if $b$ is odd\,;
			\item   $\operatorname{Mat}_{(e_i)}(b) = \left(\begin{array}{cc}
			0 & 1  \\
			1 &0\end{array} \right)$ if $b$ is even\,;
		\end{itemize}
		and such that $b(\omega,e_i) = - b(e_i,e_i)-2$ for all $1\leq i \leq n$.
	\end{enumerate}
\end{thm}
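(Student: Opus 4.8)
The plan is to prove the implications $(i)\Rightarrow(iii)\Rightarrow(ii)\Rightarrow(i)$, with $(i)\Rightarrow(iii)$ carrying essentially all the substance. So suppose $(i)$ holds. Proposition \ref{P:unimodular} supplies a trigonal basis $(e_1,\ldots,e_n)$ of $\Lambda$ together with two further vectors $e_0,e_{n+1}\in\Lambda$ such that the Gram matrix of the cyclic sequence $(e_0,e_1,\ldots,e_{n+1})$ (indices read mod $n+2$) is cyclic trigonal, with every cyclically adjacent entry equal to $1$ (the two corner entries being $(-1)^{n^+-1}=1$ since $n^+=1$), and such that $\omega=-\sum_{i=0}^{n+1}e_i$. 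Setting $b_i:=b(e_i,e_i)$, the relation $\omega=-\sum_i e_i$ gives $b(\omega,e_j)=-b_j-2$ for every $j$ (each column of the Gram matrix consists of its diagonal entry and two $1$'s), whence
$$b(\omega,\omega)=-\sum_{j=0}^{n+1}b(\omega,e_j)=\sum_{j=0}^{n+1}b_j+2(n+2).$$
The whole theorem is thereby reduced to a statement about cyclic trigonal integer matrices $C$ of size $n+2$ all of whose adjacency entries equal $1$ and with $\operatorname{rank}C=n$ (the rank being $n$ since $e_1,\ldots,e_n$ already span the non-degenerate $\Lambda$): one must show that $\operatorname{tr}C=\sum_j b_j=12-3(n+2)$, so that $b(\omega,\omega)=10-n$; that such a system spans the lattice $\langle 1\rangle$, $U$, or $\langle 1\rangle\oplus\langle-1\rangle^{\oplus(n-1)}$; and that $\omega$ is then primitive, except in the first two cases where it is $3\lambda$ resp.\ $2\lambda$ for a primitive $\lambda$.

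I would treat this cyclic system by induction on $n$ through a contraction move. If some $b_i=-1$, then $e_i$ has square $-1$ and is orthogonal to every member of the cyclic sequence of length $n+1$ obtained by deleting $e_i$ and replacing $e_{i-1},e_{i+1}$ by $e_{i-1}+e_i,e_{i+1}+e_i$; hence $\Lambda\cong\bar\Lambda\oplus\langle-1\rangle$ with $\bar\Lambda=e_i^{\perp}$ of signature $(1,n-2)$ carrying the new system, and one checks that $\omega=\bar\omega+e_i$ where $\bar\omega=-\sum(\text{new vectors})$, so $b(\omega,\omega)=b(\bar\omega,\bar\omega)-1$ and $\bar\omega$ is again special. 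The base cases are $n=1$, where $\operatorname{rank}C=1$ forces all $b_i=1$, $\Lambda\cong\langle1\rangle$, $\omega=-3e_1$; and $n=2$, where $\operatorname{rank}C=2$ forces $(b_0,b_1,b_2,b_3)=(0,c,0,-c)$ up to cyclic shift, so $\Lambda\cong U$ for $c$ even and $\Lambda\cong\langle1\rangle\oplus\langle-1\rangle$ for $c$ odd, with $\omega$ twice a primitive vector in the former. Running the induction yields the trace identity, and since each contraction splits off a $\langle-1\rangle$ summand it shows $\Lambda$ is odd for $n\geq2$ unless it equals $U$, hence (by the classification of indefinite odd unimodular lattices) $\Lambda\cong\langle1\rangle\oplus\langle-1\rangle^{\oplus(n-1)}$; tracking $\omega$ through the contractions shows it stays primitive. \emph{The main obstacle} is making the induction run, that is, showing that for $n\geq3$ every such cyclic system has an entry equal to $-1$: all $b_i\leq-2$ is impossible since the associated trigonal Gram matrix would be negative definite, contradicting the signature, but the cases with some $b_i\geq0$ and no $b_i=-1$ (notably $b_i=0$) need extra work. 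In effect one is re-deriving the Hille--Perling correspondence between abstract toric systems and smooth complete toric surfaces; an alternative is to note that $\operatorname{rank}C=n$ means precisely that the cyclic monodromy $\prod_i T^{-b_i}S\in\operatorname{SL}_2(\Z)$ is the identity, which forces $\sum_j b_j\equiv12-3(n+2)\pmod{12}$ via the abelianization and can be sharpened to an equality by a Rademacher quasimorphism computation combined with a convexity argument controlling signs.

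Granting the classification, $(iii)$ follows by putting $\Lambda$ into its standard orthogonal form --- $\operatorname{diag}(1,-1,\ldots,-1)$ if $b$ is odd, the hyperbolic form if $b$ is even --- and normalising $\omega$ using transitivity of the orthogonal group on characteristic vectors of a fixed norm (valid for indefinite unimodular lattices of rank $\geq3$; the ranks $1$ and $2$ are checked by hand); the conditions on $b(\omega,\omega)$ and on the divisibility of $\omega$ then read off at once. The implication $(iii)\Rightarrow(ii)$ is a direct computation: solving $b(\omega,e_i)=-b(e_i,e_i)-2$ in the standard basis forces $\omega=-3e_1+e_2+\cdots+e_n$ (odd case) or $\omega=-2e_1-2e_2$ (even case), whence $b(\omega,\omega)=9-(n-1)=10-n$ resp.\ $8=10-n$; $\omega$ is characteristic since $b(\omega,e_i)\equiv b(e_i,e_i)\pmod2$; and $\omega$ is primitive unless $n=1$, where $\omega=3(-e_1)$, or $\Lambda\cong U$, where $\omega=2(-e_1-e_2)$.

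Finally $(ii)\Rightarrow(i)$ is proved case by case by exhibiting an explicit trigonal basis carrying a special characteristic element. For $\langle1\rangle$ take $e_1=-\lambda$, of trigonal form $[1]$. For $U$, move $\lambda$ to $-e_1-e_2$ (possible because the orthogonal group of $U$ is transitive on primitive vectors of norm $2$); the hyperbolic basis is then itself trigonal, of form $[0,0]$. For $\langle1\rangle\oplus\langle-1\rangle^{\oplus m}$, first normalise $\omega$ to $-3e_1+e_2+\cdots+e_n$ by the transitivity above, then build a trigonal basis inductively by the move inverse to the contraction --- passing from a trigonal form $[a_1,\ldots,a_r]$ to $[a_1,\ldots,a_{r-1}-1,-1]$, with base case $[1,0]$ for $m=1$ --- and in every case verify $b(\omega,e_i)=-b(e_i,e_i)-2$ on the constructed basis by the same recursion. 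This closes the cycle and proves the theorem.
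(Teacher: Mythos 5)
Your reduction of $(i)\Rightarrow(iii)$ to a statement about cyclic trigonal systems of $n+2$ vectors with $\omega=-\sum_i e_i$ is legitimate (it is exactly what Proposition \ref{P:unimodular} provides, and your computation $b(\omega,\omega)=\sum_j b_j+2(n+2)$ is correct), but the heart of the argument is then the claim that every such cyclic system of rank $n$ admits an entry $b_i=-1$ when $n\geq 3$, so that the contraction induction can run and the trace identity $\sum_j b_j=12-3(n+2)$ can be extracted. You identify this yourself as ``the main obstacle'' and do not resolve it: ruling out ``all $b_i\leq -2$'' by negative-definiteness is fine, but the genuinely hard configurations are those containing entries $\geq 0$ and no $-1$, and for these you offer only two unexecuted sketches (re-deriving the Hille--Perling classification of abstract toric systems, or identifying the rank condition with the triviality of the monodromy $\prod_i T^{-b_i}S$ in $\mathrm{SL}_2(\Z)$ and pinning down $\sum_j b_j$ exactly via a Rademacher-quasimorphism and convexity argument). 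Either route is a substantial piece of work in its own right --- the second in particular requires controlling the rotation number, which is precisely where the combinatorial difficulty of the toric-system classification lives --- so as written the central implication is not proved.

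For comparison, the paper sidesteps the cyclic system entirely at this step: Proposition \ref{P:i-ii} works with the $n$-element trigonal basis of $\Lambda$ itself and inducts by splitting off $\langle -1\rangle$ summands. The existence of the needed $-1$ is supplied by Lemma \ref{L:0-1}, whose proof exploits the signature $(1,n-1)$ by passing to the negative-definite orthogonal complement of a norm-$1$ basis vector (where Lemma \ref{lem -1 0} forces a $-1$), and the basis-change moves of Observations \ref{obs} convert a $0$ entry into a $-1$ while preserving the speciality of $\omega$; the norm formula $b(\omega,\omega)=10-n$ then falls out of the resulting normal form (Corollary \ref{C:pq}) rather than from a trace identity on the cyclic system. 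Your remaining implications ($(iii)\Rightarrow(ii)$ by direct computation, $(ii)\Rightarrow(i)$ via transitivity of $O(q)$ on characteristic vectors --- which the paper establishes as Proposition \ref{P:ii-iii}, using Nikulin for $n>10$ and explicit reflection arguments for $n\leq 10$) are sound in outline, but they do not repair the gap in the first implication.
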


Before we prove the theorem, we state and prove a few useful lemmas that give constraints on the coefficients of trigonal unimodular matrices.

\begin{lem}\label{lem -1 0}
	Let $n\geq 2$ and let $a_1,a_2,\ldots, a_n$ be real numbers. Consider a
	trigonal
	matrix $ M = [a_1,a_2,\ldots, a_n]$ as in \eqref{eq matrix}. Assume that $\det M
	= \pm 1$. Then there exists an index $i$
	such that $|a_i|<2$.
\end{lem}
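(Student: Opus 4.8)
The plan is to argue by contradiction. Suppose $|a_i|\geq 2$ for \emph{every} index $i\in\{1,\ldots,n\}$; I will show that then $|\det M|>1$, which is incompatible with $\det M=\pm1$. The only tool needed is the recurrence for the leading principal minors already recorded in the proof of Proposition \ref{P:unimodular}: setting $d_0:=1$ and $d_m:=\det\operatorname{trig}(a_1,\ldots,a_m)$ for $1\leq m\leq n$, a cofactor expansion along the last row gives $d_1=a_1$ and
\[
d_m=a_m d_{m-1}-d_{m-2}\qquad\text{for all }m\geq 2 .
\]

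The heart of the argument is the claim that, under the hypothesis $|a_i|\geq 2$ for all $i$, the sequence of absolute values is strictly increasing:
\[
|d_m|>|d_{m-1}|\qquad\text{for all }1\leq m\leq n .
\]
I would prove this by induction on $m$. The base case $m=1$ is $|d_1|=|a_1|\geq 2>1=|d_0|$. For the inductive step, fix $m\geq 2$ and assume $|d_{m-1}|>|d_{m-2}|$; then the triangle inequality together with the recurrence yields
\[
|d_m|\geq |a_m|\,|d_{m-1}|-|d_{m-2}|\geq 2|d_{m-1}|-|d_{m-2}|>2|d_{m-1}|-|d_{m-1}|=|d_{m-1}|,
\]
which closes the induction. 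Since $n\geq 2$, applying the claim repeatedly gives $|\det M|=|d_n|>|d_{n-1}|>\cdots>|d_0|=1$, contradicting $\det M=\pm1$. Hence there must exist an index $i$ with $|a_i|<2$.

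I expect no serious obstacle. The one point deserving care is that the $a_i$ are allowed to have mixed signs, so one cannot normalize to the case $a_i\geq 2$ by a change of basis; this is exactly why the whole argument is phrased in terms of absolute values, and the induction never needs the signs of the $a_i$. It is also worth double-checking the initialisation $d_0=1$ (rather than $0$), which is what makes the base case $|d_1|=|a_1|\geq 2>|d_0|$ genuine, and the harmless book-keeping that the claim for index $m$ uses only the claim for index $m-1\geq 1$.
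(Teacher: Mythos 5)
Your proof is correct. It is, at bottom, the same argument as the paper's, but packaged differently, and the difference is worth a remark. The paper diagonalizes $M$ by a volume-preserving change of basis whose successive pivots are $b_1=a_1$ and $b_i=a_i-\tfrac{1}{b_{i-1}}$, proves $|b_i|>1$ by induction, and concludes $|\det M|=\prod_i|b_i|>1$. You instead run the three-term recurrence $d_m=a_md_{m-1}-d_{m-2}$ for the leading principal minors and show $|d_m|>|d_{m-1}|$ by induction. Since $b_i=d_i/d_{i-1}$, your inequality $|d_i|>|d_{i-1}|$ is literally the paper's $|b_i|>1$, and the telescoping product $\prod_i b_i=d_n$ identifies the two conclusions. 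What your version buys is that it involves no division, so there is no need to check (as the paper's induction implicitly must) that the pivots $b_{i-1}$ are nonzero before writing $a_i-\tfrac{1}{b_{i-1}}$; it also reuses the minor recurrence already established in the proof of Proposition \ref{P:unimodular}. What the paper's version buys is the explicit diagonal form of the bilinear form, which fits the lattice-theoretic flavour of the appendix. Both inductions are sound, and your base case $|d_1|=|a_1|\geq 2>1=|d_0|$ and the step via the triangle inequality are handled correctly.
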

\begin{proof} 
Assume for contradiction that for all $i$ we have $|a_i| \geq 2$.	Let $V$ be an $n$-dimensional $\R$-vector space with basis $(e_1,\ldots,e_n)$\,;
	we view $M$ as the matrix of a symmetric bilinear form $b$ on $V$ expressed in
	the
	basis $(e_1,\ldots, e_n)$.
	Since $|a_i| \geq 2$ for all $i$, it is possible to define inductively
	$b_1:=a_1$ and $b_i := a_i-\frac{1}{b_{i-1}}$ for $i\geq 2$\,;  in fact one sees
	by induction that $|b_i|>1$ for all $i$.  Let us consider
	the following volume-preserving change of basis for $V$\,:
	$e'_1 := e_1$, and $e'_i :=
	e_i-\frac{1}{b_{i-1}}e'_{i-1}$. 
	Expressed in the basis $(e'_1,\ldots, e'_n)$, the matrix $M'$ of the
	bilinear form
	$b$ has determinant $\det M' =\det M =\pm 1$. Moreover, $M'$ is diagonal with
	diagonal terms given by
	the real numbers $b_i$. Thus $|\det M'| = \prod_i |b_i| > 1$, which is a
	contradiction.
\end{proof}

\begin{obs}\label{obs}
Let $(\Lambda,b)$ be a trigonal unimodular lattice of rank $n$ equipped with a special characteristic element $\omega$. Consider a basis $(e_1,\ldots, e_n)$ of $\Lambda$ such that
$\operatorname{Mat}_{(e_i)}(b) = [a_1,\ldots,a_n]$ for some integers $a_i$, and such that  
$b(\omega,e_i) = -b(e_i,e_i)-2$ for all $1\leq i \leq n$.
We make the following observations\,: \medskip

$(i)$ Suppose there exists $j<n$ such that $a_j = 0$ and, given an integer $x$, consider the new basis $$(e'_i)_{1\leq i \leq n} := (e_1, \ldots, e_{j},e_{j+1}+xe_j, e_{j+2}, \ldots, e_n).$$ Then that new basis is trigonal for $b$\,; in fact the matrix of $b$ in that basis is $$\operatorname{Mat}_{(e'_i)}(b) =  [a_1,\ldots,a_{j-1},0,a_{j+1}+2x,a_{j+2},\ldots,a_n].$$ Moreover, one readily checks that $\omega$ satisfies	$b(\omega,e'_i) = -b(e'_i,e'_i)-2$ for all $1\leq i \leq n$.
\medskip

$(ii)$ Suppose there exists $j<n$ such that $a_j = -1$ and consider the new basis $$(e'_i)_{1\leq i \leq n} := (e_1, \ldots, e_{j},e_j+e_{j+1}, e_{j+2}, \ldots, e_n).$$ Then in that new basis  $b$ splits as the direct orthogonal sum of two trigonal lattices\,;  precisely the matrix of $b$ in that basis is $$\operatorname{Mat}_{(e'_i)}(b) =  [a_1,\ldots,a_{j-1},-1] \oplus [a_{j+1}+1,a_{j+2},\ldots,a_n].$$ Moreover, one readily checks that $\omega$ satisfies	$b(\omega,e'_i) = -b(e'_i,e'_i)-2$ for all $1\leq i \leq n$.
\end{obs}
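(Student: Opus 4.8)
The statement is a pair of explicit Gram-matrix computations, so the plan is to expand $b$ bilinearly in the new basis and track exactly which entries change. In both parts every basis vector is fixed except $e'_{j+1}$, so $b(e'_i,e'_k)=b(e_i,e_k)$ as soon as neither index equals $j+1$; hence the only entries that can change are those in row and column $j+1$. It thus suffices to evaluate $b(e'_{j+1},e'_{j+1})$ and $b(e'_{j+1},e'_k)$ for all $k$, throughout using the tridiagonal relations $b(e_i,e_i)=a_i$, $b(e_i,e_{i+1})=1$, and $b(e_i,e_k)=0$ for $|i-k|\ge 2$, together with the hypothesis on $a_j$.

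For part $(i)$, with $e'_{j+1}=e_{j+1}+xe_j$ and $a_j=0$, I would first compute the new diagonal entry $b(e'_{j+1},e'_{j+1})=a_{j+1}+2x\,b(e_j,e_{j+1})+x^2a_j=a_{j+1}+2x$, where the hypothesis $a_j=0$ is precisely what kills the $x^2$-term; the adjacent entries $b(e'_j,e'_{j+1})=1+x a_j$ and $b(e'_{j+1},e'_{j+2})=1+x\,b(e_j,e_{j+2})$ then both reduce to $1$. For part $(ii)$, with $e'_{j+1}=e_j+e_{j+1}$ and $a_j=-1$, the same expansion gives $b(e'_{j+1},e'_{j+1})=a_j+2\,b(e_j,e_{j+1})+a_{j+1}=a_{j+1}+1$ and, decisively, $b(e'_j,e'_{j+1})=a_j+b(e_j,e_{j+1})=-1+1=0$; this vanishing is what detaches the $j$-th and $(j+1)$-th positions and is the source of the claimed orthogonal splitting into the blocks $[a_1,\ldots,a_{j-1},-1]$ and $[a_{j+1}+1,a_{j+2},\ldots,a_n]$.

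The step I expect to be the main obstacle is confirming that the modified basis is genuinely trigonal (respectively block-diagonal), that is, that altering $e_{j+1}$ creates no nonzero Gram entry off the three diagonals. For indices $k$ with $k\ge j+3$ or $k\le j-2$ the entry $b(e'_{j+1},e'_k)$ vanishes at once, since both $b(e_{j+1},e_k)=0$ and $b(e_j,e_k)=0$ there. The one delicate entry is $b(e'_{j+1},e'_{j-1})$, which expands to $b(e_{j+1},e_{j-1})$ plus the modification coefficient times $b(e_j,e_{j-1})$; here $b(e_{j+1},e_{j-1})=0$ while $b(e_j,e_{j-1})=1$, so this is exactly the term one must control, and it is the point at which the location of $j$ in the chain enters (it causes no trouble precisely when there is no left neighbour $e_{j-1}$ to couple to). I would therefore treat this entry with particular care and only then conclude that the Gram matrix has the asserted shape.

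Finally, the special-characteristic condition $b(\omega,e'_i)=-b(e'_i,e'_i)-2$ is verified by bilinearity and is routine once the diagonal values above are in hand. For $i\ne j+1$ it is inherited verbatim from the hypothesis on the original basis. For $i=j+1$ it is a one-line check: in part $(i)$, using $b(\omega,e_j)=-a_j-2=-2$, one finds $b(\omega,e'_{j+1})=(-a_{j+1}-2)-2x=-(a_{j+1}+2x)-2$, matching $-b(e'_{j+1},e'_{j+1})-2$; in part $(ii)$, $b(\omega,e'_{j+1})=(-a_j-2)+(-a_{j+1}-2)=-a_{j+1}-3=-(a_{j+1}+1)-2$, again as required.
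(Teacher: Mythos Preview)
Your approach—expanding $b$ bilinearly and tracking only row and column $j{+}1$—is correct, and all the entries you actually compute (the new diagonal, the adjacent super-diagonal entries, and the $\omega$-pairings) come out exactly as you say. The paper offers no argument beyond ``one readily checks'', so at the level of method there is nothing to compare.

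However, the entry you single out as ``delicate'' is not merely delicate: it is a genuine obstruction, and you stop just short of saying so. For $j\ge 2$ one has
\[
b(e'_{j+1},e'_{j-1}) \;=\; b(e_{j+1},e_{j-1}) + c\,b(e_j,e_{j-1}) \;=\; 0 + c\cdot 1 \;=\; c,
\]
with $c=x$ in part~$(i)$ and $c=1$ in part~$(ii)$. Thus for $j\ge 2$ the new Gram matrix is \emph{not} trigonal in $(i)$ and the two blocks are \emph{not} orthogonal in $(ii)$; the Observation as literally stated holds only for $j=1$. Your phrasing ``treat with particular care and only then conclude that the Gram matrix has the asserted shape'' suggests the difficulty can be overcome with diligence—it cannot, and you should state plainly that this entry is nonzero and that the claimed shape fails when $j\ge 2$.

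For completeness, the repair for $(ii)$ that the later induction actually needs is to modify both neighbours: with $e'_{j-1}=e_{j-1}+e_j$, $e'_{j+1}=e_{j+1}+e_j$, and $e'_j=e_j$, one gets $b(e'_j,e'_{j\pm 1})=0$, so $\langle e'_j\rangle\cong\langle -1\rangle$ splits off orthogonally, and the remaining $n{-}1$ vectors form a single trigonal block $[a_1,\ldots,a_{j-2},\,a_{j-1}{+}1,\,a_{j+1}{+}1,\,a_{j+2},\ldots,a_n]$ (the value $b(e'_{j-1},e'_{j+1})=1$ supplies the required super-diagonal $1$ between the two halves), with the $\omega$-condition preserved throughout.
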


\begin{lem} \label{L:0-1}  
	Suppose $(\Lambda,b)$ is a trigonal unimodular lattice and let $(e_i)_{1\leq i\leq n}$ be a  trigonal basis for $b$. 
	\begin{enumerate}[(i)]
\item If $b$ has signature $(1,n-1)$, then there
exists an index $i$
such that $a_i := b(e_i,e_i) = -1$ or $0$\,; 
\item If $b$ is positive definite, then there exists an index $i$ such that $a_i:= b(e_i,e_i) =1$\,;
\item If $b$ is negative definite, then there exists an index $i$ such that $a_i:= b(e_i,e_i) =-1$.
	\end{enumerate}
\end{lem}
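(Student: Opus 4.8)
The plan is to deduce all three statements from Lemma~\ref{lem -1 0}, or rather from the ``continued-fraction'' diagonalization used in its proof (complete the square in $x_1$, then in $x_2$, and so on: $b_1=a_1$, $b_i=a_i-1/b_{i-1}$). Parts $(ii)$ and $(iii)$ are then immediate: if $b$ is positive (resp.\ negative) definite, every diagonal entry $a_i=b(e_i,e_i)$ is positive (resp.\ negative), so $a_i\geq 1$ (resp.\ $a_i\leq -1$); Lemma~\ref{lem -1 0} yields an index with $|a_i|<2$, and this forces $a_i=1$ (resp.\ $a_i=-1$). So the real content is $(i)$.

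For $(i)$ I would argue by contradiction, assuming no $a_i$ equals $-1$ or $0$, i.e.\ $a_i\geq 1$ or $a_i\leq -2$ for all $i$, and aim for a contradiction either with the signature $(1,n-1)$ or with unimodularity. Put $P=\{i:a_i\geq 1\}$. Since $b$ has only one positive eigenvalue it admits no positive-definite sublattice of rank $\geq 2$; applied to $\Z e_i\oplus \Z e_{i'}$ for $i,i'\in P$ with $|i-i'|\geq 2$ (whose Gram matrix is the positive-definite diagonal matrix $\operatorname{diag}(a_i,a_{i'})$) this shows any two elements of $P$ are adjacent, so $P$ is a set of at most two consecutive integers. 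Now I run the diagonalization of Lemma~\ref{lem -1 0}, but sweeping inward \emph{from both ends}: successively eliminate the variables $x_i$ with $i\notin P$, processing $x_1,x_2,\dots$ from the left and $x_n,x_{n-1},\dots$ from the right. Since each such $a_i\leq -2$, an easy induction (as in the proof of Lemma~\ref{lem -1 0}) gives that all the resulting diagonal coefficients are $<-1$; what survives is a tridiagonal form of size $|P|\leq 2$ whose diagonal entries are $\geq a_i\geq 1$ (the sweeps only increase them). One then splits into cases. If $|P|=0$, the whole form is negative definite, contradicting the signature. If $|P|=1$, the surviving single entry is $>1$, so the product of all the (real) diagonal coefficients has absolute value $>1$, contradicting $\det b=\pm 1$. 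If $|P|=2$, the surviving $2\times 2$ block has both diagonal entries $\geq 1$ and off-diagonal entry $1$, hence determinant $\geq 0$: if it is $0$ then $b$ is degenerate, and if it is $>0$ then the block is positive definite, so $b$ has two positive eigenvalues — in either case a contradiction.

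The one point requiring care is the bookkeeping for the two-sided sweep. Writing $P=\{j,j+1\}$, one must check that eliminating $x_1,\dots,x_{j-1}$ modifies only the diagonal entry of $x_j$, that eliminating $x_{j+2},\dots,x_n$ modifies only the diagonal entry of $x_{j+1}$, and hence that the cross term $b(e_j,e_{j+1})=1$ is untouched; and one has to run through the boundary cases $j=1$ and $j+1=n$ (where one side of the sweep is empty), as well as note that $(i)$ is meant for $n\geq 2$ — the rank-one lattice $\langle 1\rangle$ shows $n=1$ must be excluded. This is the main, though modest, obstacle; once the elimination is organised, all the required inequalities are routine. (Alternatively, for $(i)$ one could count sign changes in the sequence of leading principal minors via the recurrence $d_m=a_md_{m-1}-d_{m-2}$, but the two-sided elimination seems cleaner.)
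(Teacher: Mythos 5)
Your proof is correct, but for part $(i)$ it takes a genuinely different route from the paper's. Parts $(ii)$ and $(iii)$ are handled identically (Lemma~\ref{lem -1 0} plus the sign of the diagonal entries). For $(i)$, the paper also starts from Lemma~\ref{lem -1 0}, which under the contradiction hypothesis produces an index $i$ with $a_i=1$, but then it splits off the unimodular rank-one sublattice $\Z e_i$: the explicit $(n-1)$-tuple $(e_1,\dots,e_{i-2},e_{i-1}-e_i,e_i-e_{i+1},-e_{i+2},\dots,-e_n)$ is a trigonal basis of $\langle e_i\rangle^\perp$ with Gram matrix $[a_1,\dots,a_{i-2},a_{i-1}-1,a_{i+1}-1,a_{i+2},\dots,a_n]$; this complement is unimodular and negative definite, so part $(iii)$ applied to it forces some $a_j\in\{-1,0\}$ — a short reduction of $(i)$ to $(iii)$ with no case analysis. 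Your argument instead constrains the set $P$ of positive diagonal entries via the signature, runs a two-sided Gaussian elimination, and extracts contradictions separately from the signature ($|P|=0$ or $2$) and from unimodularity ($|P|=1$); I checked the bookkeeping you flag (the cross term $b(e_j,e_{j+1})=1$ is indeed untouched by the two sweeps, and the overall change of basis is unipotent), and it goes through, including the boundary cases. Your version is more elementary, avoiding the orthogonal-complement computation, and makes visible which hypothesis is responsible for which failure, at the cost of being noticeably longer. Your remark that $(i)$ requires $n\geq 2$ (it fails for $\langle 1\rangle$) is accurate and applies equally to the paper's proof, which likewise invokes Lemma~\ref{lem -1 0}; since the paper only uses $(i)$ for $n\geq 3$, this is harmless.
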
	
\begin{proof}
	By Lemma \ref{lem -1 0}, there exists an index $i$ such that $a_i = -1,0$ or $1$. 
	The assertions \emph{(ii)} \emph{(iii)} are then clear. 
	Suppose now that  $b$ has signature $(1,n-1)$. Assume for contradiction
	that there is no index $i$ for which $a_i=-1$ or $0$. Then by Lemma \ref{lem -1 0} there is
	an index $i$ for which $a_i =1$.
	The $(n-1)$-tuple
	$(e_1,\ldots,e_{i-2},e_{i-1} - e_i, e_i - e_{i+1}, -e_{i+2}, \ldots,-e_n)$ gives a
	$\Z$-basis of the orthogonal complement $\langle e_i \rangle^\perp$ in $\Lambda$
	of the sub-lattice spanned by $e_i$. The matrix of the bilinear form
	$b|_{\langle e_i \rangle^\perp}$ expressed in that basis is $$[a_1,\ldots,
	a_{i-2},a_{i-1}-1,a_{i+1}-1,a_{i+2},\ldots,a_n].$$ Moreover, $b|_{\langle e_i
		\rangle^\perp}$ is unimodular and negative definite. Therefore, by \emph{(iii)}, we have $-1 \in \{a_1,\ldots,
	a_{i-2},a_{i-1}-1,a_{i+1}-1,a_{i+2},\ldots,a_n\}$, and this shows that there is
	an index $j$ such that $a_j = -1$ or $0$, which contradicts our assumption.
\end{proof}

\begin{lem}\label{lem even2} Let $(\Lambda,b)$ be an even trigonal unimodular lattice of rank $n$.
	Then $n$ is even and $\Lambda \cong U^{\oplus m}$, where $2m=n$. Moreover, there exists a trigonal $\Z$-basis $(e_i)_{1\leq i\leq 2m}$ such that $\operatorname{Mat}_{(e_i)}(b) = [0,0,\ldots,0]$.  
\end{lem}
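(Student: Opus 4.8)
I want to show that an even trigonal unimodular lattice $(\Lambda,b)$ of rank $n$ is isomorphic to $U^{\oplus m}$ with $n=2m$, and moreover admits a trigonal basis with all diagonal entries zero. The strategy is induction on $n$, peeling off one hyperbolic plane at a time. The base cases $n=0$ and $n=1$ are immediate: there is no odd-rank even unimodular lattice (its determinant $\pm 1$ would force a vector of odd norm in rank $1$), and $n=0$ is the zero lattice. For the inductive step, fix a trigonal basis $(e_1,\ldots,e_n)$, so $\operatorname{Mat}_{(e_i)}(b)=[a_1,\ldots,a_n]$; since $b$ is even, every $a_i$ is even. By Lemma \ref{lem -1 0} there is an index $i$ with $|a_i|<2$, hence $a_i=0$.

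First I would use the index $i$ with $a_i=0$ together with the row/column operations recorded in Observation \ref{obs}(i) to slide the zero to the end: repeatedly applying the substitution $e_{j+1}\mapsto e_{j+1}+xe_j$ (which preserves trigonality and changes $a_{j+1}$ by $2x$) one can, working leftward or rightward, arrange that $a_n=0$. Actually the cleanest route is: once some $a_j=0$ with $j<n$, the operation in Observation \ref{obs}(i) lets us alter $a_{j+1}$ freely by even amounts while keeping $a_j=0$; iterating, I can push a zero diagonal entry into position $n-1$ and then position $n$, or more simply split off the last $2\times 2$ block. Concretely, suppose after reordering we have $a_{n-1}=0$ (if the unique small entry is elsewhere, first move it). Then the bottom-right $2\times 2$ block is $\left(\begin{array}{cc} 0 & 1 \\ 1 & a_n\end{array}\right)$ with $a_n$ even; the substitution $e_n\mapsto e_n-\tfrac{a_n}{2}e_{n-1}$ replaces $a_n$ by $0$, and since $b(e_{n-1},e_j)=0$ for $j<n-1$ (trigonality) this change does not disturb the rest of the matrix. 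So the span of $e_{n-1},e_n$ is now a copy of $U$.

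The key point then is that this $U$ splits off orthogonally. This is not automatic from the block form, because $e_{n-1}$ still pairs with $e_{n-2}$. But $U$ is unimodular, so $\Lambda = (\Z e_{n-1}\oplus \Z e_n) \oplus (\Z e_{n-1}\oplus \Z e_n)^\perp$ as lattices. I would produce an explicit trigonal basis of the orthogonal complement: the vectors $e_1,\ldots,e_{n-3}$ together with $e_{n-2}-e_n$ span $(\Z e_{n-1}\oplus\Z e_n)^\perp$ (one checks $b(e_{n-2}-e_n,e_{n-1})=1-1=0$ and $b(e_{n-2}-e_n,e_n)=0$), and in this basis the matrix of $b|_{\perp}$ is $[a_1,\ldots,a_{n-3},a_{n-2}]$ — the same tridiagonal form truncated, because $b(e_{n-2}-e_n,e_{n-2}-e_n)=a_{n-2}+a_n=a_{n-2}$ (as $a_n=0$) and $b(e_{n-2}-e_n,e_{n-3})=1$. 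Thus the complement is again an even trigonal unimodular lattice, now of rank $n-2$, and by induction it is $U^{\oplus(m-1)}$ with a trigonal basis having zero diagonal. Gluing gives $\Lambda\cong U^{\oplus m}$ with a trigonal basis $[0,0,\ldots,0]$, and in particular $n=2m$ is even.

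**Main obstacle.** The delicate part is the bookkeeping that moves a zero diagonal entry to a prescribed position and then cleanly excises an orthogonal $U$-summand while leaving a \emph{trigonal} basis of the complement — the naive orthogonal complement basis is not tridiagonal, so one must choose it as above ($e_{n-2}-e_n$ rather than $e_{n-2}$) to preserve the tridiagonal shape. Once that linear-algebra move is set up correctly, everything else is routine, and the induction (with Lemma \ref{lem -1 0} supplying a zero entry at each stage) closes immediately.
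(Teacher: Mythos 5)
Your argument is, at bottom, the same as the paper's: both proofs locate a diagonal entry equal to $0$ via Lemma \ref{lem -1 0} together with evenness, and then use the base-change of Observation \ref{obs}$(i)$ (a zero diagonal entry lets one shift an adjacent diagonal entry by an arbitrary even integer) to create further zeroes. The difference is only in the endgame. The paper pushes the reduction all the way to a basis with Gram matrix $[0,\ldots,0]$, reads off the parity of $n$ from the determinant of that matrix ($0$ for $n$ odd, $(-1)^m$ for $n=2m$), and then exhibits $U^{\oplus m}$ by the single change of basis $(e'_1,e'_2,e'_3-e'_1,e'_4,e'_5-e'_3,\ldots)$. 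You stop as soon as $a_{n-1}=a_n=0$, split off the hyperbolic plane $\Z e_{n-1}\oplus \Z e_n$, and induct; your basis $(e_1,\ldots,e_{n-3},e_{n-2}-e_n)$ of the orthogonal complement is correct and keeps the complement visibly trigonal, even and unimodular, and the parity of $n$ falls out of the impossibility of the rank-one case. Both organizations work equally well; yours makes the unimodular splitting explicit at each stage, the paper's is shorter. (One cosmetic point: after applying the inductive hypothesis to the complement, the concatenated basis has block-diagonal rather than trigonal Gram matrix, so the final ``trigonal basis $[0,\ldots,0]$'' of $\Lambda$ still needs a small adjustment, which is immediate once $\Lambda\cong U^{\oplus m}$ is known.)

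One caveat applies equally to your write-up and to the paper's. Observation \ref{obs}$(i)$, as literally stated, is only safe at the left end of the chain: for $2\le j<n$ the substitution $e_{j+1}\mapsto e_{j+1}+xe_j$ produces $b(e_{j+1}+xe_j,\,e_{j-1})=x\,b(e_j,e_{j-1})=x$, an entry outside the three diagonals, so trigonality is not preserved; the symmetric ``leftward'' move is likewise only safe at $j=n$. Hence the step ``slide the zero to positions $n-1$ and $n$'' (and the paper's ``performing several similar changes of bases'') is not as routine as either write-up suggests when the zero supplied by Lemma \ref{lem -1 0} sits in the interior of the chain. Since you invoke the paper's own observation exactly as the paper's own proof does, this is not a gap you introduced; but it is the one place where your argument, like the paper's, requires a genuine repair rather than mere bookkeeping.
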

\begin{proof} Let  $(e_i)_{1\leq i\leq n}$ be a trigonal basis for $b$ and denote $a_i := b(e_i,e_i)$. By Lemma \ref{lem -1 0}, for the bilinear form $b$ to be
	unimodular, one of the $a_j$ has to be equal to $-1, 0$ or $1$. The pairing is
	assumed to be even, so that one of the $a_j$ is equal to $0$. The integer
	$a_{j+1}$ is even. Consider, as in Observation \ref{obs}$(i)$, the  change of $\Z$-basis $e_i \mapsto
	e_i$ for $i\neq j+1$ and $e_{j+1} \mapsto e_{j+1}-\frac{a_{j+1}}{2}e_j$. In that
	new basis, the matrix of the form $b$ is
	$[a_1,\ldots,a_{j-1},0,0,a_{j+2},\ldots, a_{2n}]$. Performing several similar
	changes of bases shows that there is a basis $(e'_1,\ldots, e'_{2n})$  of
	$\Lambda$ such that the matrix of the form $b$ expressed in that basis is
	$[0,0,\ldots,0]$.  A straightforward calculation shows that $\det b = 0$ if $n$ is odd, and that $\det b = (-1)^m$ if $n=2m$ for some integer $m$. Thus $n$ is even.
	Consider then the basis $(e'_1,e'_2,e'_3-e_1',e'_4,e_5'-e_3',e'_6, \ldots,
	e'_{2m-1}-e'_{2m-3}, e'_{2m})$ of $\Lambda$. It becomes apparent that $\Lambda
	\cong U^{\oplus m}$.
\end{proof}

The following proposition and its corollary prove $(i)\Rightarrow (ii)$ of Theorem \ref{T:mainappendix}, and is the heart of the proof of $(ii) \Rightarrow (iii)$ of our Main Theorem \ref{thm criterion} (and hence of the arithmetic application thereof given by Theorem \ref{thm geomrat surfaces}).

\begin{prop} \label{P:i-ii}
Let $(\Lambda,b)$ be a trigonal unimodular lattice of signature $(1,n-1)$ and let $\omega$ be a special characteristic element  in $\Lambda$. Then there exists a $\Z$-basis $(e_1,\ldots, e_n)$ of $\Lambda$ such that
 \begin{center}
$b(\omega,e_i) = -b(e_i,e_i)-2$ for all $1\leq i \leq n$    \quad and \quad  
$\Lambda = \left\{
\begin{array}{ll}
 \operatorname{trig}(0,0) & \mbox{if } \Lambda \mbox{ is even}\,; \\
\langle 1 \rangle \oplus \langle -1 \rangle^{\oplus n-1} & \mbox{if } \Lambda \mbox{ is odd}.
\end{array} \right.$
 \end{center}
\end{prop}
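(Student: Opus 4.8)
The plan is to induct on the rank $n$, treating the even and the odd cases separately and peeling off rank-one orthogonal summands while keeping track of $\omega$. In the even case the statement is almost immediate: by Lemma \ref{lem even2} we have $(\Lambda,b)\cong U^{\oplus m}$ with $n=2m$, and comparing with the signature $(1,n-1)$ forces $m=1$, so $n=2$ and $\Lambda$ has a trigonal basis with Gram matrix $\operatorname{trig}(0,0)$. Since $\omega$ is special there is a trigonal basis with $b(\omega,e_i)=-b(e_i,e_i)-2$; after possibly reversing it, Observation \ref{obs}$(i)$ normalizes it so that both diagonal entries become $0$, and then $b(\omega,e_i)=-2$ forces $\omega=-2e_1-2e_2$, which is exactly the asserted conclusion.

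The odd case is where the work lies, and I would argue by induction on $n$. For $n=1$, unimodularity and signature force $\Lambda=\langle 1\rangle$, and $\omega$ special forces $\omega=\pm 3e_1$, hence $\omega=-3e_1$ after possibly replacing $e_1$ by $-e_1$. For $n\ge 2$ the key step is to exhibit a trigonal basis, still carrying the $\omega$-property, that has some diagonal entry equal to $-1$. Lemma \ref{L:0-1}$(i)$ yields a diagonal entry in $\{-1,0\}$, which after reversing the basis (harmless for trigonality and for the $\omega$-property) may be taken to be $a_1$. If $a_1=0$, Observation \ref{obs}$(i)$ lets me alter $a_2$ by an arbitrary even integer: if $a_2$ is odd I make it $-1$; if $a_2$ is even I make it $0$, so the isotropic pair $\langle e_1,e_2\rangle\cong U$ splits off orthogonally with a trigonal, negative-definite, unimodular complement of rank $n-2$, in which Lemma \ref{L:0-1}$(iii)$ provides a diagonal entry $-1$, and this transports back to a trigonal basis of $\Lambda$ with the $\omega$-property; an interior zero is dealt with by the same mechanism through the isotropic reduction $\Lambda\cong U\oplus N'$. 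Once a diagonal entry $a_j=-1$ is available (and placed at a position $j<n$ by a further reversal if needed), Observation \ref{obs}$(ii)$ gives an orthogonal splitting $\Lambda=\langle e_j\rangle\oplus\Lambda'$ with $e_j^2=-1$ and $b(\omega,e_j)=-(-1)-2=-1$, where $\Lambda'$ is trigonal unimodular of rank $n-1$ and signature $(1,n-2)$; writing $\omega=e_j+\omega'$ with $\omega'\in\Lambda'$ one checks that $\omega'$ is a special characteristic element of $\Lambda'$. If $\Lambda'$ is odd the inductive hypothesis gives the desired basis of $\Lambda'$; if $\Lambda'$ is even then, being trigonal, it must equal $U$ with $n=3$, and $\Lambda\cong\langle -1\rangle\oplus U$ is handled by hand (for instance $-e_j+f_1+f_2$ has norm $1$ and is completed to an orthogonal basis with the correct values of $b(\omega,\cdot)$). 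Reassembling and reordering so that the norm-$1$ vector comes first produces the required basis; orthogonality guarantees the values of $b(\omega,\cdot)$ on each vector, and reading $\omega$ off in this basis gives, as a byproduct, $b(\omega,\omega)=10-n$ together with the primitivity and lattice-shape assertions of Theorem \ref{T:mainappendix}$(ii)$.

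The main obstacle is exactly this combinatorial bookkeeping in the odd case: guaranteeing that a $-1$ can always be arranged on the diagonal of an $\omega$-compatible trigonal basis (the case analysis above, with basis reversal and the isotropic reduction used to move or absorb zeros), and controlling the low-rank degeneration in which the complement becomes even. The remaining verifications, namely that $\omega$ restricts to a special characteristic element of the smaller lattice at each stage and the final readoff, are routine.
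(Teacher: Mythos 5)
Your overall strategy (induct on the rank, manufacture a diagonal entry $-1$ via Lemma \ref{lem -1 0}/\ref{L:0-1} and Observation \ref{obs}, split it off with Observation \ref{obs}$(ii)$, and repair the residual $\langle -1\rangle\oplus U$ degeneration by hand) is the same as the paper's, but there is a genuine gap at the inductive step. You assert that Observation \ref{obs}$(ii)$, applied at a position $j<n$ with $a_j=-1$, yields an orthogonal splitting $\Lambda=\langle e_j\rangle\oplus\Lambda'$ with $\Lambda'$ trigonal of rank $n-1$ and signature $(1,n-2)$. That is only what happens when $j=1$: in general the observation produces the two-block decomposition $[a_1,\ldots,a_{j-1},-1]\oplus[a_{j+1}+1,\ldots,a_n]$, i.e., two trigonal summands of ranks $j$ and $n-j$, exactly one of which is negative definite; after also peeling the $\langle -1\rangle$ off the first block (the mirror of Observation \ref{obs}$(ii)$), the complement of $\langle e_j\rangle$ is an \emph{orthogonal sum of two trigonal lattices}, not a single trigonal lattice of signature $(1,n-2)$. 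Your earlier claim that reversal lets you place the special entry at $a_1$ is also false for an interior index ($a_j$ goes to $a_{n+1-j}$), and your own mechanisms for producing the $-1$ (Lemma \ref{L:0-1}$(i)$, the parity adjustment of Observation \ref{obs}$(i)$, the isotropic reduction followed by Lemma \ref{L:0-1}$(iii)$) can all land it at an interior position. So the inductive hypothesis, as you have formulated it only for signature $(1,\cdot)$, cannot be applied to the pieces you actually obtain.

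The repair is precisely what the paper does: strengthen the induction hypothesis to cover trigonal unimodular lattices of signature $(0,m)$ as well as $(1,m-1)$ (with conclusion $\langle -1\rangle^{\oplus m}$, resp.\ $[0,0]$ or $\operatorname{diag}(\pm1,-1,\ldots,-1)$), apply it to \emph{both} blocks produced by Observation \ref{obs}$(ii)$, and only then reassemble, converting a possible $\langle -1\rangle^{\oplus n-2}\oplus[0,0]$ into $\langle 1\rangle\oplus\langle -1\rangle^{\oplus n-1}$ by the explicit rank-$3$ change of basis. You already use negative definite lattices implicitly (via Lemma \ref{L:0-1}$(iii)$) when hunting for the $-1$, so extending the induction to that case costs nothing; but as written, the step "the complement is trigonal of signature $(1,n-2)$, so induct" would fail whenever the $-1$ sits strictly inside the trigonal basis. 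The remaining ingredients of your argument (the even case via Lemma \ref{lem even2}, the base cases, the restriction of $\omega$ to each summand being special, and the final readoff of $b(\omega,\omega)=10-n$) are correct.
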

\begin{proof} The case $n=1$ is trivial. Assume that $n=2$\,; in that case any trigonal basis $(e_1,e_2)$ for $b$ is such that $\operatorname{Mat}_{(e_i)}(b) = [a_1,a_2]$ with $a_1a_2= 0$ (since $\det b = -1$). Thus, by Observation \ref{obs}$(i)$, there is a basis $(e_1,e_2)$ for $b$ is such that $\operatorname{Mat}_{(e_i)}(b) = [0,a]$ with $a = 0$ or $-1$, and such that $\omega$ is special with respect to that basis. The former case is the case where $\Lambda$ is even, while in the latter case Observation \ref{obs}$(ii)$ gives us a new basis, namely $(e_1'+e_2',e_2')$, in which the matrix of $b$ is $\operatorname{diag}(1,-1)$ and  with respect to which $\omega$ is special.
	
	For the sake of the induction argument to come, let us consider a negative definite trigonal unimodular lattice $(\Lambda,b)$  of rank $2$. Let $(e_1,e_2)$ be a trigonal basis\,; the unimodularity of $b$ shows that up to reordering $e_1$ and $e_2$, we have $\operatorname{Mat}_{(e_i)}(b) = [-2,-1]$. Assume that $\omega$ is such that $b(\omega,e_i) = -b(e_i,e_i)-2$. Then Observation \ref{obs}\emph{(ii)} says that in the basis $(e'_1,e'_2) := (e_1+e_2,e_2)$ we have $\operatorname{Mat}_{(e'_i)}(b) = \operatorname{diag}(-1,-1)$ and $b(w,e'_i) = -b(e_i,e_i) - 2 = -1$. 
	
Assume now that $n \geq 3$\,; we are going to proceed by induction. 
We suppose that for all $m<n$, if $(\Lambda,b)$ is a trigonal unimodular lattice of signature $(1,m-1)$ or $(0,m)$ endowed with a special characteristic element $\omega$, then there exists a basis $(e_1,\ldots, e_m)$ of $\Lambda$ such that $b(\omega,e_i) = -b(e_i,e_i)-2$ for all $1\leq i \leq m$, and such that $\operatorname{Mat}_{(e_i)}(b)$ is either equal to $[0,0]$ or to $\operatorname{diag}(\pm 1, -1,\ldots, -1)$.
We now fix a trigonal unimodular lattice $(\Lambda,b)$ of signature $(1,n-1)$ or $(0,n)$ endowed with a special characteristic element $\omega$.
By Lemma \ref{lem even2}, the trigonal lattice $(\Lambda,b)$, which has signature $(1,n-1)$ or $(0,n)$, has to be odd. Let
$(e_1,\ldots, e_n)$ be a trigonal basis of $\Lambda$ with respect to which $\omega$ is special. 
 By Lemma \ref{L:0-1}(i), there exists $1\leq j \leq n$, such that $b(e_j,e_j) = -1$ or $0$. Suppose that $b(e_j,e_j) = 0$. Then, by repeated use of Observation \ref{obs}$(i)$, we find a trigonal basis $(e'_1,\ldots,e'_n)$ of $\Lambda$ with respect to which $\omega$ is special, and such that $b(e'_k,e'_k)=-1$ for some $k$. Therefore,  we may assume that $b(e_j,e_j) = -1$ in the first place. By Observation \ref{obs}$(ii)$, we find a basis $(f_1,\ldots, f_n)$ of $\Lambda$ such that  \begin{center}
 	$b(\omega,f_i) = -b(f_i,f_i)-2$  \quad and \quad  
 	$\Lambda = [b(f_1,f_1),\ldots,b(f_{j-1},f_{j-1})] \oplus \langle -1 \rangle \oplus [b(f_{j+1},f_{j+1}),\ldots,b(f_n,f_n)].$
\end{center}
By the induction hypothesis, we obtain a basis $(f'_1,\ldots, f'_n)$ for which $b(\omega,f'_i) = -b(f'_i,f'_i)-2$ for all $1 \leq i \leq m$, and for which $\Lambda$ is either $\langle -1 \rangle^{\oplus n}$, $\langle 1 \rangle \oplus \langle -1 \rangle^{\oplus n-1}$ or $ \langle -1 \rangle^{\oplus n-2} \oplus U$. In order to finish off the induction, we note that if there is a basis $(e_1,e_2,e_3)$ of a rank-$3$ unimodular lattice $\Lambda$ such that $\operatorname{Mat}_{(e_i)}(b) = \langle -1 \rangle \oplus [0,0]$ with $b(\omega,e_i) = -b(e_i,e_i)-2$ for $1\leq i \leq 3$, then in the basis $$(e'_i)_{1\leq i \leq 3} := (e_2+e_3-e_1, e_2-e_1, e_3-e_1)$$ we have $\operatorname{Mat}_{(e'_i)}(b) = \langle 1 \rangle \oplus \langle -1 \rangle \oplus \langle -1 \rangle$ with $b(\omega,e'_i) = -b(e'_i,e'_i)-2$ for $1\leq i \leq 3$.
\end{proof}

\begin{cor} \label{C:pq}
	Let $(\Lambda,b)$ be a trigonal unimodular lattice of signature $(1,n-1)$. If $\omega$ is a special characteristic element  in $\Lambda$, then 
	\begin{equation}\label{eq kappa2}
		b(\omega,\omega) = 10-n.
	\end{equation}
\end{cor}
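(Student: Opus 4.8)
The plan is to reduce immediately to the normal form furnished by Proposition \ref{P:i-ii} and then compute $b(\omega,\omega)$ directly in that basis. By Proposition \ref{P:i-ii} there is a $\Z$-basis $(e_1,\ldots,e_n)$ of $\Lambda$ with $b(\omega,e_i) = -b(e_i,e_i)-2$ for all $i$, and with $\operatorname{Mat}_{(e_i)}(b)$ equal either to $\operatorname{trig}(0,0)$ (when $\Lambda$ is even, forcing $n=2$) or to $\operatorname{diag}(1,-1,\ldots,-1)$ (when $\Lambda$ is odd). In the odd case the signature hypothesis $(1,n-1)$ pins down the number of $+1$'s to be exactly one, so $b(e_1,e_1)=1$ and $b(e_i,e_i)=-1$ for $2\le i\le n$.

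The key step is then to solve for the coordinates of $\omega$ in this basis. Writing $\omega = \sum_i c_i e_i$ and using that $b$ is diagonal in the odd case, the equations $b(\omega,e_i) = -b(e_i,e_i)-2$ become $c_1 = -3$ and $-c_i = -1$ for $i\ge 2$, i.e.\ $c_i = 1$ for $i\ge 2$. Hence $b(\omega,\omega) = c_1^2 - \sum_{i\ge 2} c_i^2 = 9 - (n-1) = 10-n$. In the even case $n=2$, $b(e_1,e_1)=b(e_2,e_2)=0$, the conditions read $b(\omega,e_1)=b(\omega,e_2)=-2$, which give $c_2=-2$ and $c_1=-2$; since $b(e_1,e_2)=1$ we get $b(\omega,\omega) = 2c_1c_2 = 8 = 10-n$. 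This proves \eqref{eq kappa2} in all cases.

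I do not expect any real obstacle: once Proposition \ref{P:i-ii} is invoked, everything is a one-line linear-algebra computation in a diagonal (or $\operatorname{trig}(0,0)$) basis. The only point to be slightly careful about is the bookkeeping of signs and of the multiplicities of $\pm 1$ on the diagonal, which is where the signature assumption $(1,n-1)$ enters; a cleaner, basis-free alternative would be to note that, by Proposition \ref{P:unimodular}, $\omega = -\sum_{i=0}^{n+1}\lambda_i$ for suitable $\lambda_i$ and expand $b(\omega,\omega)$ using the explicit trigonal Gram matrix of item $(iii)$ there together with $b(\omega,\lambda_i) = -b(\lambda_i,\lambda_i)-2$, but the direct computation above is the shortest route.
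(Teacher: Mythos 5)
Your proof is correct and follows exactly the route the paper takes: invoke Proposition \ref{P:i-ii} to put $(\Lambda,b,\omega)$ into the normal form $\operatorname{trig}(0,0)$ or $\operatorname{diag}(1,-1,\ldots,-1)$, solve the defining equations $b(\omega,e_i)=-b(e_i,e_i)-2$ for the coordinates of $\omega$, and compute the norm, obtaining $8=10-2$ in the even case and $9-(n-1)=10-n$ in the odd case. The only (harmless) discrepancy is a sign on the coefficients $c_i$ for $i\ge 2$, which does not affect $b(\omega,\omega)$.
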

\begin{proof}
	This follows immediately from Proposition \ref{P:i-ii}\,: 
		\begin{itemize}
			\item If $\Lambda = [0,0]$, then $\omega = -2e_1 - 2e_2$ and hence $b(\omega,\omega) = 8$.
			\item If $\Lambda = 
			\langle 1 \rangle \oplus \langle -1 \rangle^{\oplus n-1}$, then $\omega = -3e_1 - e_2 - \cdots - e_n$ and hence $b(\omega,\omega) = 10-n$.
		\end{itemize}
\end{proof}

\begin{rmk} \label{rmk K2} 
	   In fact, it is possible to generalize Proposition \ref{P:i-ii} to unimodular lattices of any signature\,: it can be shown that if $\Lambda$ is unimodular and trigonal, then there exists a basis $(e_1,\ldots, e_n)$ of $\Lambda$ such that $b(\omega,e_i) = -b(e_i,e_i)-2$ for all $1\leq i \leq n$, and such that with respect to that basis $\Lambda$ splits as a direct orthogonal sum of lattices isomorphic to $[0,0]$, $[1]$,$[-1]$, $[1,2]$ and $[1,3,1]$. (Note that in particular a unimodular lattice is trigonal if and only if it is isomorphic to a direct sum of copies of $U$, $\langle 1 \rangle$, and $\langle -1 \rangle$.)
	 As a consequence, if $(\Lambda,b)$ is a trigonal unimodular lattice of signature $(n^+,n^-)$ and if $\omega$ is a special characteristic element  in $\Lambda$, then 
		\begin{equation}\label{eq kappa}
		b(\omega,\omega) = 8\left\lfloor \frac{n^++1}{2}\right\rfloor
		+n^+-n^-.
		\end{equation}
	The formula \eqref{eq kappa} should be compared to van der Blij's lemma
	\cite[Lemma	II.(5.2)]{hm}. 
Let $(\Lambda, b)$ be a unimodular lattice\,; it is easy to see
	that a characteristic element $\omega$ always exists since the function $\Lambda \rightarrow
	\Z/2\Z, \lambda \mapsto b(\lambda, \lambda) [\mbox{mod}\ 2]$ is $\Z/2\Z$-linear. It is
	also easy to check that the integer $b(\omega,\omega)$ is an invariant
	modulo $8$. Van der Blij's lemma states that in fact $b(\omega,\omega) =
	n^+-n^-\ [\mbox{mod}\ 8].$
	Thus, for a trigonal unimodular lattice, a special characteristic element $\omega$ (that is, the element $\omega \in \Lambda$ such that $b(\omega, e_i) = -b(e_i,e_i)-2$ for a basis $(e_1,\ldots,e_n)$ of $\Lambda$ in which the matrix of $b$ is trigonal)  can be thought of as an integral characteristic element in the lattice $\Lambda$, and
	\eqref{eq kappa} gives an integral version of van der Blij's lemma for
	trigonal unimodular lattices.
\end{rmk}

The following Witt-type proposition proves $(ii)\Rightarrow (iii)$ of Theorem \ref{T:mainappendix} for odd unimodular lattices, and is the heart of the proof of $(iii) \Rightarrow (i)$ of our Main Theorem \ref{thm criterion} (and hence of Theorem \ref{thm surface general type}).

\begin{prop} \label{P:ii-iii}
	Let $(\Lambda,b)$ be an odd unimodular lattice of signature $(1,n-1)$ and let $\omega$ be a characteristic element in $\Lambda$ such that $b(\omega,\omega) = 10-n$. Assume further that $\omega$ is primitive if $n\geq 10$. Then there exists a basis $(e_1,\ldots, e_n)$ of $\Lambda$ such that 
	$$\operatorname{Mat}_{(e_i)}(b) = \operatorname{diag}(1,-1,\ldots,-1) \quad \mbox{and} \quad \omega = 3e_1 + e_2 + \ldots + e_n.$$
\end{prop}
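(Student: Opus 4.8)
The plan is to reduce at once to the standard model $I_{1,n-1}:=\langle 1\rangle\oplus\langle -1\rangle^{\oplus(n-1)}$, using the classification of indefinite odd unimodular lattices (see \cite{hm}), and then to run a descent. After fixing an orthogonal basis $(u_0,u_1,\dots,u_{n-1})$ with $b(u_0,u_0)=1$ and $b(u_i,u_i)=-1$ for $i\ge 1$ and writing $\omega=\sum_i x_i u_i$, the hypothesis that $\omega$ is characteristic says precisely that every $x_i$ is odd, and the norm hypothesis reads $x_0^2-\sum_{i\ge 1}x_i^2=10-n$. The first point I would record is an a priori bound valid in \emph{any} such basis: one cannot have $|x_0|=1$, for then $\sum_{i\ge 1}x_i^2=n-9$, which is impossible since the left-hand side is a sum of $n-1$ odd squares and hence is $\ge n-1$; as $x_0$ is odd this forces $|x_0|\ge 3$. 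Moreover, if $|x_0|=3$ then $\sum_{i\ge 1}x_i^2=n-1$ forces $|x_i|=1$ for all $i\ge 1$, and after changing the signs of the $u_i$ (an isometry) we reach exactly $\omega=3u_0+u_1+\cdots+u_{n-1}$, which is the asserted normal form. I would also note here that for $n\le 9$ the vector $\omega$ is automatically primitive: if $\omega=d\omega'$ with $\omega'$ primitive then, $\omega$ being characteristic in an odd lattice, $d$ is odd, and if $d\ge 3$ then $9\mid 10-n$, forcing $n\le 1$; so the primitivity hypothesis is only used, and is genuinely needed, for $n\ge 10$ --- for instance three times the standard null characteristic vector of $I_{1,9}$ is characteristic of norm $0=10-10$ but is not of the normal form.

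Thus everything comes down to exhibiting \emph{some} orthogonal basis as above in which the $u_0$-coefficient of $\omega$ is $\pm 3$. I would do this by induction on $n$, the cases $n=1$ (where $\omega=\pm 3u_0$ is immediate) and $n=2$ (a one-line computation: the characteristic vectors of norm $8$ in $I_{1,1}$ are exactly $(\pm 3,\pm 1)$) serving as the base. For the step $n\ge 3$, the engine is a reduction move using reflections in vectors of small norm, all of which lie in $O(\Lambda)$: normalising so that $x_0\ge 3$ and $x_1\ge x_2\ge\cdots\ge x_{n-1}\ge 1$, if we are not already done then some $x_i\ge 3$, and one shows that a suitable reflection --- for instance $r_v$ in the norm $-1$ vector $v=u_0+u_1+u_2$ when $x_0<x_1+x_2$, and variants of it in the remaining ranges --- strictly decreases $|x_0|$, or else decreases $\sum_{i\ge 1}x_i^2$ while keeping $|x_0|$ fixed and $\ge 3$; since $|x_0|$ is a positive integer bounded below by $3$, the process terminates at $|x_0|=3$, whence the conclusion by the previous paragraph. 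Alternatively, and more slickly, for $n\ge 3$ one may simply observe that both $\omega$ and the model vector $3u_0+u_1+\cdots+u_{n-1}$ are primitive of norm $10-n$, and invoke the transitivity of $O(\Lambda)$ on primitive vectors of a fixed norm in an indefinite unimodular lattice of rank $\ge 3$ (Eichler; see also Wall); pulling back the model orthogonal basis along such an isometry then gives the required basis directly.

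The hard part is the reduction move itself: making a uniform enough choice of reflection so that one really decreases a well-chosen potential in \emph{every} configuration of the $(x_i)$. The delicate point --- which is also what forces the care in the statement --- is that orthogonal complements of rank-one sublattices of $I_{1,n-1}$ need not have the same parity (e.g. the orthogonal complement in $I_{1,2}$ of the norm $-1$ vector $u_0+u_1+u_2$ is isometric to the hyperbolic plane $U$), so one cannot naively ``peel off a $\langle -1\rangle$ and recurse''; this is why I would phrase the induction as a descent on $|x_0|$ inside the fixed model $I_{1,n-1}$ rather than as an abstract orthogonal splitting, and it is the same phenomenon that makes the primitivity hypothesis for $n\ge 10$ indispensable. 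If instead one routes the argument through Eichler's transitivity theorem, the whole difficulty is displaced onto that theorem, and the only thing left to do by hand is the easy $n\le 2$ base and the observation that $\omega$ is automatically primitive for $3\le n\le 9$.
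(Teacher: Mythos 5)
Your setup---diagonalizing $\Lambda$ as $\langle 1\rangle\oplus\langle-1\rangle^{\oplus(n-1)}$, the observation that the characteristic and norm conditions force $|x_0|\ge 3$ with equality already yielding the normal form, and the automatic primitivity for $2\le n\le 9$---is correct and coincides with the paper's framework for $n\le 10$. But the core of the argument is missing. Your descent is set up to terminate when no reflection decreases $|x_0|$, and you then assert that at that point $|x_0|=3$; this is exactly the claim that a \emph{reduced} vector satisfying the hypotheses equals $(3,1,\dots,1)$, and it is not a formal consequence of having a supply of reflections. Note that your descent step, as described, uses neither the characteristic condition, nor the specific value $b(\omega,\omega)=10-n$ with $n\le 10$, nor primitivity when $n=10$---yet the conclusion fails without each of these (your own example $3(3e_1+e_2+\cdots+e_{10})$ shows this for primitivity). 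So the number-theoretic identification of the terminal configurations cannot be avoided, and this is precisely where the paper works: after reducing, via sign changes, permutations, and the reflection in $e_1+e_2+e_3+e_4$ (norm $-2$, hence an integral reflection), to the chamber $0\le x_n\le\cdots\le x_1$ and $x_2+x_3+x_4\le x_1$, it solves the resulting Diophantine constraints, using $n\le 10$ to force $x_4\le 1$ and the characteristic condition to force $x_4=\cdots=x_n=1$, and so on. Your write-up contains none of this, and the one concrete move you do propose ($r_v$ with $v=u_0+u_1+u_2$ when $x_0<x_1+x_2$) says nothing about the configurations with $x_0\ge x_1+x_2$ and $x_1\ge 3$, which are exactly the ones the Diophantine analysis must rule out. (Also, the fallback branch ``decreases $\sum_{i\ge1}x_i^2$ while keeping $|x_0|$ fixed'' is vacuous: the norm equation forces $\sum_{i\ge1}x_i^2=x_0^2-(10-n)$, so that quantity cannot move while $|x_0|$ is fixed.)

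The ``slicker'' alternative is also flawed as stated: $O(\Lambda)$ is \emph{not} transitive on primitive vectors of a fixed norm in an odd indefinite unimodular lattice of rank $\ge 3$, because being characteristic is an $O(\Lambda)$-invariant. For instance, in $\langle 1\rangle\oplus\langle-1\rangle^{\oplus 2}$ the primitive vectors $3e_1+e_2+e_3$ and $4e_1+3e_2$ both have norm $7$, but only the first is characteristic, so they lie in different orbits. What you need is the refinement (Wall's theorem, or the result of Nikulin that the paper invokes for $n>10$) asserting transitivity on \emph{primitive characteristic} vectors of a given norm; with that corrected citation the alternative route does work for $n\ge 3$, handles $n>10$ in the same way the paper does, and leaves only the base cases $n=1,2$ and the verification that $3e_1+e_2+\cdots+e_n$ is a primitive characteristic vector of norm $10-n$. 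But if you want a self-contained argument for $n\le 10$, as the paper gives, the explicit reduction-plus-Diophantine analysis sketched above is unavoidable, and it is the part your proposal leaves out.
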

\begin{proof} In the case $n>10$, that is, in the case $b(\omega,\omega)<0$, the proposition was already proved in greater generality by Nikulin \cite{nikulin}. 
	Indeed, assume $n>10$ and let $\omega$ be a primitive characteristic element of norm $b(\omega,\omega) = 10-n$ in the lattice $\Lambda$ of signature $(1,n-1)$. In particular, $b(\omega,\omega)<0$ and the restriction of $b$ to the orthogonal complement $\omega^\perp$ of $\omega$ is even and indefinite. Therefore, we may invoke \cite[Prop. 3.5.1]{nikulin} (which applies since our lattice has rank $\geq 4$), which says that there is only one orbit under $O(q)$ of primitive characteristic elements of given negative norm.\medskip
	
Consider now  an odd unimodular lattice $(\Lambda,b)$ of signature $(1,n-1)$. Pick a basis $(e_1,\ldots, e_n)$ of $\Lambda$ such that 	$\operatorname{Mat}_{(e_i)}(b) = \operatorname{diag}(1,-1,\ldots,-1)$. \medskip

\noindent \textbf{Claim 1.} Let  $\omega$ be an element of $\Lambda$ such that $b(\omega,\omega) \geq 0$. Then there is an automorphism $\varphi$ of $\Lambda$ preserving $q$ (\emph{i.e.}, $\varphi \in O(q)$) such that $\varphi(\omega) = x_1e_1 + \cdots + x_ne_n$ with 
\begin{equation}\label{E:ineq}
0\leq x_n \leq x_{n-1} \leq \cdots \leq x_1 \quad \mbox{and} \quad x_4 + x_3 + x_2 \leq x_1.
\end{equation}
(When $n=1$ or $2$, the latter inequality should be ignored\,; and when $n=3$, it should be understood to read $x_3 + x_2 \leq x_1$).

\noindent In other words, the orbit of any element of non-negative norm under the action of $O(q)$ contains an element whose coordinates satisfy \eqref{E:ineq}. 

\begin{proof}[Proof of Claim 1.]
Given a vector $v \in \Lambda$ of norm $b(v,v)$ that divides $2$, we define the reflection $R_v \in O(q)$ across the hyperplane orthogonal to $v$ by the formula $$R_v(\lambda) := \lambda - 2\frac{b(\lambda,v)}{b(v,v)} v.$$

Let $\xi := x_1e_1 + \cdots + x_ne_n$ be an element of $\Lambda$.  Up to applying the reflections $R_{e_i}$, we see that all vectors $\pm x_1e_1 \pm \cdots \pm x_ne_n$ belong to the orbit of $\xi$ under the action of $O(q)$. Consider the action of the symmetric groups $\mathfrak{S}_{n-1}$ on the set $\{2,\ldots, n\}$\,; this induces an action on the lattice $\Lambda$, given by $f_{\sigma}(\xi) := x_1e_1 + x_{\sigma^{-1}(2)}e_2 +\cdots + x_{\sigma^{-1}(n)}e_n$ for all $\sigma \in \mathfrak{S}_{n-1}$. Clearly $\sigma \mapsto f_\sigma$ defines a homomorphism $\mathfrak{S}_{n-1} \to O(q)$. Therefore, $f_{\sigma}(\xi)$ belongs to the orbit of $\xi$ for all $\sigma \in \mathfrak{S}_{n-1}$.

Consider now  a non-zero element $\xi$  of non-negative norm. By the above, the vector $\xi$ has in its orbit a vector $x_1e_1 + \cdots + x_ne_n$ with $0\leq x_1$ and $0\leq x_n\leq x_{n-1} \leq \cdots \leq x_2$. Since $b(\xi,\xi) \geq 0$, we actually have $ 0\leq x_n  \leq \cdots \leq x_1$. 
Choose now such a vector in the orbit of $\xi$ with minimal non-negative $x_1$. Assume that $n\geq 4$ (we indicate how to treat the case $n\leq 3$ at the end of the proof). We claim that $x_4 + x_3 + x_2 \leq x_1$. If that is not case, consider the reflection $R_v$ with $v := e_1+e_2+e_3+e_4$. Then we have 
\begin{align*}
R_v(\xi) =& (2x_1-x_2-x_3-x_4)e_1 + (x_1-x_3-x_4)e_2 + (x_1-x_2-x_4)e_3 + (x_1-x_2-x_3)e_4\\
& + e_5 + \cdots + e_n.
\end{align*}  Given that $ 0\leq x_4 \leq x_{3} \leq x_2 \leq x_1$ and $x_4 + x_3 + x_2 > x_1$ (note also that $x_4 < x_1$ because $b(\xi,\xi) \geq 0$), we have $$-x_1 < 2x_1 - x_2 - x_3 - x_4 < x_1.$$ Therefore,  making all the coordinates of $R_v(\xi)$ non-negative and reordering them in decreasing order, we obtain a vector $\xi' := x'_1e_1 + \cdots + x'_ne_n$ in the orbit of $\xi$ with $0 \leq x'_n \leq \cdots \leq x'_1 < x_1$, thus yielding a contradiction.

\noindent (The cases $n=1$ and $n=2$ are obvious, while in the case $n=3$ one proceeds similarly by considering the reflection $R_v$ with $v := e_1+e_2+e_3$.)
\end{proof}

\noindent \textbf{Claim 2.} Assume $n<10$. Then there is only one orbit of characteristic elements of $\Lambda$ of norm $10-n$\,; it is the orbit of the element $3e_1+e_2 +\cdots +e_n$.  

\begin{proof}[Proof of Claim 2.]
For this purpose, given Claim 1, we show that a characteristic vector $\xi := x_1e_1 + \cdots + x_ne_n$ of norm $b(\xi,\xi)=10-n$ and whose coordinates satisfy \eqref{E:ineq}  is necessarily the vector $3e_1+e_2+ \cdots +e_n$. 
Squaring the inequality $x_4 + x_3 + x_2 \leq x_1$ yields 
\begin{equation}\label{E:ineq2}
2(x_2x_3 + x_2x_4 + x_3x_4) \leq x_1^2-x_2^2-x_3^2-x_4^2 = 10-n +x_5^2+\cdots + x_n^2.
\end{equation} Using the comparison of $x_i$ with $x_4$, we find $6x_4^2 \leq 10-n + (n-4)x_4^2$. The assumption $n<10$ immediately gives $x_4 \leq 1$. Since we are assuming that $\xi$ is characteristic, this forces $x_n = \cdots = x_4 =1$.  Therefore we obtain $x_1^2-x_2^2-x_3^2 = 7$. Squaring the inequality $x_2 + x_3 < x_1$ then gives $2x_2x_3 <7$. Thus $(x_2,x_3)$ is either $(1,1)$ or $(3,1)$ (recall that $\xi$ is characteristic so that its coordinates are odd integers). But the latter is not possible since otherwise $17$ would be a square. Hence $(x_1,x_2,x_3) = (3,1,1)$.  
\end{proof}

\noindent \textbf{Claim 3.} Assume $n=10$. Then there is only one orbit of isotropic primitive characteristic elements of $\Lambda$\,; it is the orbit of the element $3e_1+e_2 +\cdots +e_n$.  
\begin{proof}[Proof of Claim 3.]
As in the proof of Claim 2, we obtain the inequality \eqref{E:ineq2}. Singling out the term $x_{10}^2$, we obtain $$6x_4^2 \leq  5x_4^2 + x_{10}^2$$ and hence $x_4^2\leq x_{10}^2$. This proves that $x_4 = x_5 =\cdots = x_{10}$. We are thus reduced to solving the Diophantine equation $$x_1^2 = x_2^2 + x_3^2 + 7x_4^2$$ with the constraint that $x_1,\ldots, x_4$ are odd integers, with no common prime factors, satisfying $0<x_4 \leq x_3 \leq x_2 \leq x_1$ and $x_4+x_3 + x_2 \leq x_1$. On the one hand, squaring the latter inequality  gives $$2(x_2x_3 + x_2x_4 + x_3x_4) \leq x_1^2-x_2^2-x_3^2-x_4^2 = 6x_4^2.$$ On the other hand, the former inequality gives $$6x_4^2 \leq 2(x_2x_3 + x_2x_4 + x_3x_4) $$ with equality if and only if $x_2 = x_3 = x_4$. Therefore, we immediately get $x_2 = x_3 = x_4$ and then that $x_1 = 3x_2$. The only primitive solution is then $(3,1,1,1)$.
\end{proof}
The proof of Proposition \ref{P:ii-iii} is now complete.
\end{proof}

Finally we provide a proof of Theorem \ref{T:mainappendix}.

\begin{proof}[Proof of Theorem \ref{T:mainappendix}]
	$(i)\Rightarrow (ii)$\,: Given Proposition \ref{P:i-ii} and Corollary \ref{C:pq}, it only remains to see that $\omega = -3e_1$ if $\Lambda = \langle 1 \rangle$, $\omega = -2e_1-2e_2$ if $\Lambda = [0,0]$, and that $\omega = -3e_1 - e_2 - \cdots - e_n$ if $\Lambda = \langle 1 \rangle \oplus \langle -1 \rangle^{\oplus n-1}.$
	
	$(ii) \Rightarrow (iii)$\,: Given Proposition \ref{P:ii-iii}, it only remains to treat the case where $\Lambda$ is isomorphic to the hyperbolic plane and $\omega$ is twice a primitive vector. Let $(e_1,e_2)$ be a basis of $\Lambda$ such that the matrix of $b$ is $[0,0]$. Let us then write $\omega = a e_1 +b e_2$. By assumption we have $b(\omega,\omega) = 2ab = 8$. Since  $\omega$ is twice a primitive vector, we find up to considering the new basis $(\pm e_1,\pm e_2)$ that $\omega = -2e_1 - 2e_2$. It is then apparent that  $b(\omega,e_i) = - b(e_i,e_i)-2$ for $1\leq i \leq 2$.
	
		$(iii) \Rightarrow (i)$\,: The even case is obvious, and so is the case where $\Lambda = \langle 1 \rangle$. 	Let us thus consider an odd unimodular lattice $(\Lambda,b)$ of signature $(1,n-1)$ with $n>1$, and let  $(e_1,\ldots, e_n)$ be a basis of $\Lambda$ such that 
		$\operatorname{Mat}_{(e_i)}(b) = \operatorname{diag}(1,-1,\ldots,-1) $. Let $\omega$ be the element in $\Lambda$ such that $b(\omega,e_i) = -b(e_i,e_i)-2$ for all $1\leq i \leq n$. Then one readily checks that the vectors 	$$	\left\{
		\begin{array}{lll} e'_i &=& e_1 - e_{i+1}, \quad 1\leq i <  n\\
		e'_n &=& e_n \end{array} \right.$$ provide a basis $(e'_1,\ldots, e'_n)$  of $\Lambda$ such that 
		$\operatorname{Mat}_{(e'_i)}(b) = [-1,0,\ldots,0]$ and such that $b(\omega,e'_i) = -b(e'_i,e'_i)-2$ for all $1\leq i \leq n$. 
\end{proof}




\begin{thebibliography}{NS}
\bibitem{ao} V. Alexeev and D. Orlov, \textit{Derived categories of Burniat
surfaces and exceptional collections}, Math. Ann. \textbf{357} (2013), 743--759.
\bibitem{actp} A. Auel, J.-L. Colliot-Th\'el\`ene, R. Parimala,
\textit{Universal unramified cohomology of cubic
fourfolds containing a plane}, Brauer groups and obstruction problems: moduli spaces and arithmetic (Palo Alto, 2013), Asher Auel, Brendan Hassett, Tony V\'arilly-Alvarado, and Bianca Viray, eds., to appear.
\bibitem{bhp} W. Barth, K. Hulek, C. Peters, and A. Van de Ven, \textit{Compact
complex surfaces}, 
Second edition. Ergebnisse der Mathematik und ihrer Grenzgebiete. 3. Folge.
Springer-Verlag, Berlin, 2004. 
\bibitem{beilinson} A. Beilinson, \textit{Coherent sheaves on $\PP^n$
and problems in linear algebra},  Funktsional. Anal. i Prilozhen. \textbf{12}
(1978), no. 3, 68--69.
\bibitem{br} P. Belmans and T. Raedschelders, \textit{Embeddings of quivers in
derived categories of surfaces}, preprint available at arXiv:1501.04197v2.
\bibitem{bernardara} M. Bernardara, \textit{A semiorthogonal
    decomposition for Brauer-Severi schemes},
  Math. Nachr. \textbf{282} (2009), no. 10, 1406--1413.
\bibitem{bgs} C. B\"ohning, H.-C. Graf von Bothmer, P. Sosna, \textit{On the
derived category of the classical Godeaux surface}, Adv. Math. \textbf{256}
(2014), 479--492.
\bibitem{bgks} C. B\"ohning, H.-C. Graf von Bothmer, L. Katzarkov, P.Sosna,
\textit{Determinantal Barlow surfaces and phantom categories}, J. Eur. Math. Soc. \textbf{17}:7 (2015),  1569--1592.
\bibitem{bp} A. Bondal and A. Polishchuk, \textit{Homological properties
    of associative algebras: the method of helices}, Russian Academy
  of Sciences. Izvestiya Mathematics, 1994, \textbf{42}:2, 219--260.
\bibitem{cholee} Y. Cho and Y. Lee, \textit{Exceptional collections on Dolgachev surfaces associated with degenerations}, preprint available at arXiv:1506.05213.
\bibitem{cd} F. Cossec and I. Dolgachev, \textit{Enriques surfaces, I}.
Progress in Mathematics, \textbf{76}. Birkh\"auser 1989. x+397~pp.
\bibitem{deligne} P. Deligne, \textit{Letter to the author}, dated February 23 2015.
\bibitem{dolgachev} I. Dolgachev, \textit{Algebraic surfaces with q=pg=0},
Algebraic surfaces, 97--215, 
C.I.M.E. Summer Sch., \textbf{76}, Springer, Heidelberg, 2010. 
\bibitem{fulton} W.~Fulton, \textit{Intersection theory}, Second
  edition. Ergebnisse der Mathematik und ihrer
  Grenzgebiete. 3. Folge. A Series of Modern Surveys in Mathematics,
  Springer-Verlag, 1998.
\bibitem{gkms} S. Galkin, L. Katzarkov, A. Mellit, E. Shinder,
  \textit{Minifolds and phantoms}, preprint available at arXiv:1305.4549v2.
\bibitem{gs} S. Galkin, E. Shinder, \textit{Exceptional collections of line
bundles on the Beauville surface}, Adv. Math. \textbf{244} (2013), 1033--1050.
\bibitem{go} S. Gorchinskiy and D. Orlov, \textit{Geometric phantom categories},
Publ. Math. Inst. Hautes \'Etudes Sci. \textbf{117} (2013), 329--349.
\bibitem{hassett} B. Hassett, \textit{Rational surfaces over nonclosed fields},
Arithmetic geometry, 155--209, 
Clay Math. Proc., 8, Amer. Math. Soc., Providence, RI, 2009.  
\bibitem{hp} L. Hille and M. Perling, \textit{Exceptional sequences of
invertible sheaves on rational surfaces}, Compos. Math. \textbf{147} (2011), no.
4, 1230--1280.
\bibitem{hm} D. Husemoller and J. Milnor,  \textit{Symmetric
    bilinear forms}, Ergebnisse der Mathematik und ihrer Grenzgebiete,
  Band \textbf{73}, Springer-Verlag, New York-Heidelberg, 1973.
\bibitem{isk} V. A. Iskovskih, \textit{Minimal models of rational surfaces over
arbitrary fields}, Izv. Akad. Nauk SSSR Ser. Mat. \textbf{43} (1979), no. 1,
19--43, 237.
\bibitem{kapranov} M. Kapranov, \textit{On the derived categories of
    coherent sheaves on some homogeneous spaces},
  Invent. Math. \textbf{92} (1988), no. 3, 479--508.
\bibitem{ko} S. Kobayashi and T. Ochiai, \textit{Characterizations of
    complex projective spaces and hyperquadrics}, J. Math. Kyoto
  Univ. \textbf{13} (1973), 31--47.
\bibitem{kotschick} D. Kotschick, \textit{The topology of algebraic surfaces
with irregularity and geometric genus zero}, Geometry of low-dimensional
manifolds, 1 (Durham, 1989), 55--62, 
London Math. Soc. Lecture Note Ser., \textbf{150}, 1990. 
\bibitem{kimura2} S.-I. Kimura, \textit{Surjectivity of the cycle map
    for Chow motives}, Motives and algebraic cycles, 157--165, Fields
  Inst. Commun., \textbf{56}, Amer. Math. Soc., Providence, RI, 2009.
\bibitem{kuzminifold} A. Kuznetsov,
\textit{Hyperplane sections and derived categories},
Izv. Math. \textbf{70} (2006), no. 3, 447--547.
\bibitem{kuznetsov} A. Kuznetsov, \textit{Base change for
    semiorthogonal decompositions}, Compos. Math. \textbf{147} (2011),
  no. 3, 852--876.
\bibitem{kuz} A. Kuznetsov, \textit{A simple counterexample to the
Jordan--H\"older property for derived categories}, preprint available at arXiv:1304.0903.
\bibitem{kuznetsov2}  A. Kuznetsov, \textit{Semiorthogonal decompositions in
algebraic geometry}, Proceedings of ICM-2014, arXiv:1404.3143.
\bibitem{kp} A. Kuznetsov and A. Polishchuk, 
\textit{Exceptional collections on isotropic Grassmannians},
J. Eur. Math. Soc. \textbf{18} (2016), no. 3, 507--574. 
\bibitem{liedtke2} Ch. Liedtke,  
\textit{Non-classical Godeaux surfaces},
Math. Ann. \textbf{343} (2009), no. 3, 623--637. 
\bibitem{liedtke} Ch. Liedtke, 
\textit{Algebraic surfaces in positive characteristic}, Birational geometry, rational curves, and arithmetic, 229--292, Springer, New York, 2013. 
\bibitem{manin} Yu. Manin, \textit{Rational surfaces over perfect fields}, 
Inst. Hautes Études Sci. Publ. Math. \textbf{30} 1966 55--113. 
\bibitem{mt} M. Marcolli and G. Tabuada, \textit{From exceptional decompositions
to motivic decompositions via noncommutative motives}, J. Reine Angew. Math. \textbf{701} (2015), 153--167.
\bibitem{newman} M. Newman, \textit{Trigonal matrices},
Linear Algebra Appl. \textbf{201} (1994), 51--55. 
\bibitem{nikulin} V. Nikulin, \textit{Integer symmetric bilinear forms and some
of their geometric applications},
  Izv. Akad. Nauk SSSR Ser. Mat. \textbf{43} (1979), no. 1, 111--177.
\bibitem{oda} T. Oda, \textit{Convex bodies and algebraic geometry. 
An introduction to the theory of toric varieties.}  Ergebnisse der Mathematik
und ihrer Grenzgebiete  \textbf{15}. Springer-Verlag, Berlin, 1988.
\bibitem{orlov} D. Orlov, \textit{Projective bundles, monoidal transformations,
and derived categories of coherent sheaves}, Izv. Ross. Akad. Nauk Ser. Mat.
\textbf{56} (1992), no. 4, 852--862. 
\bibitem{orlov2} D. Orlov, \textit{Derived categories of coherent sheaves, and
motives}, Russian Math. Surveys \textbf{60} (2005), no. 6, 1242--1244.
\bibitem{perling} M. Perling, \textit{Combinatorial aspects of exceptional
sequences on (rational) surfaces}, preprint.
\bibitem{py} G. Prasad and S.-K. Yeung, \textit{Arithmetic fake
    projective spaces and arithmetic fake Grassmannians},
  Amer. J. Math. \textbf{131} (2009), no. 2, 379--407.
 \bibitem{samokhin} A. Samokhin, 
 \textit{On the derived category of coherent sheaves on a 5-dimensional Fano variety},
 C. R. Math. Acad. Sci. Paris \textbf{340} (2005), no. 12, 889--893. 
\bibitem{sv} M. Shen and Ch. Vial, \textit{The Fourier transform for
    certain hyperK\"ahler fourfolds}, Mem. Amer. Math. Soc. \textbf{240} (2016), no. 1139, vii+163~pp. 
\bibitem{totaro} B. Totaro, \textit{The motive of a classifying space}, to appear in Geometry and Topology. 
\bibitem{vial} Ch. Vial, \textit{Pure motives with representable Chow groups}, C.
R. Math. Acad. Sci. Paris \textbf{348} (2010), no. 21-22, 1191--1195.
\bibitem{voisin} C. Voisin, \textit{On the universal $\CH_0$ group of cubic
hypersurfaces}, to appear in JEMS.
\bibitem{zube} S. Zube, \textit{Exceptional vector bundles on Enriques
surfaces}, Mat. Zametki \textbf{61} (1997), no. 6, 825--834.
\end{thebibliography}
\end{document}